\title{Counting Invariant Curves: a theory of Gopakumar-Vafa
invariants for Calabi-Yau threefolds with an involution}
\author{Jim Bryan and Stephen Pietromonaco}
\date{\today}
\address{
Department of Mathematics\\
University of British Columbia \\
Room 121, 1984 Mathematics Road  \\
Vancouver, B.C., Canada V6T 1Z2  
}
\newtheorem{theorem}{Theorem}[section]
\newtheorem{proposition}[theorem]{Proposition}
\newtheorem{conjecture}[theorem]{Conjecture}
\newtheorem{lemma}[theorem]{Lemma}
\newtheorem{corollary}[theorem]{Corollary}
\theoremstyle{definition}
\newtheorem{remark}[theorem]{Remark}
\newtheorem{def-theorem}[theorem]{Definition-Theorem}
\newtheorem{definition}[theorem]{Definition}
\newtheorem{assumption}[theorem]{Assumption}
\newcommand{\CC} {{\mathbb C}}          
\newcommand{\NN} {{\mathbb N}}		
\newcommand{\ZZ} {{\mathbb Z}}		
\newcommand{\QQ} {{\mathbb Q}}		
\newcommand{\FF} {{\mathbb F}}
\newcommand{\evir}{e_{\mathsf{vir}}}
\newcommand{\reg}{\mathsf{reg}}
\newcommand{\calE}{\mathcal{E}}
\newcommand{\PP}{\mathbb{P}}
\newcommand{\OO}{\mathcal{O}}
\newcommand{\M}{{\mathsf{M}}}
\newcommand{\nPT}[2]{n^{\scriptscriptstyle{\PT} }_{#1}\left(#2\right)}
\newcommand{\inv}{\imath}
\newcommand{\nPTbg}{\nPT{g}{\beta }}
\newcommand{\nPTbgh}{\nPT{g,h}{\beta }}
\newcommand{\nMT}[2]{n^{\scriptscriptstyle{\MT} }_{#1}\left(#2 \right)}
\newcommand{\nMTbg}{\nMT{g}{\beta }}
\newcommand{\nMTbgh}{\nMT{g,h}{\beta }}
\newcommand{\nbg}{n_{g}(\beta )}
\newcommand{\nbgh}{n_{g,h}(\beta )}
\newcommand{\Sresolve}{\widehat{S}}
\newcommand{\Ehat}{\widehat{E}}
\newcommand{\PT}{\mathsf{PT}}
\newcommand{\MT}{\mathsf{MT}}
\newcommand{\Chow}{ \operatorname{Chow}}
\newcommand{\supp}{ \operatorname{supp}}
\newcommand{\exc}{\mathsf{exc}}
\newcommand{\pt}{\mathsf{pt}}
\newcommand{\sh}{\mathsf{sh}}
\newcommand{\type}{\mathsf{type}}
\newcommand{\evNiI}{{\mathsf{N_{I}^{ev}}}}
\newcommand{\oddNiI}{{\mathsf{N_{I}^{odd}}}}
\newcommand{\NiII}{{\mathsf{N_{II}}}}
\newcommand{\evAb}{{\mathsf{A^{ev}}}}
\newcommand{\oddAb}{{\mathsf{A^{odd}}}}
\newcommand{\dsemistable}{\operatorname{\delta-ss}}
\newcommand{\dstable}{\operatorname{\delta-s}}
\newcommand{\Jac}{\operatorname{J\overline{ac}}}
\newcommand{\half}{\frac{1}{2}}
\newcommand{\tinyhalf}{\tfrac{1}{2}}
\newcommand{\coker}{\operatorname{coker}}
\newcommand{\Sym}{\operatorname{Sym}}
\newcommand{\Pic}{\operatorname{Pic}}
\newcommand{\Tot}{\operatorname{Tot}}
\renewcommand\restriction[2]{{
  \left.\kern-\nulldelimiterspace 
  #1 
  \vphantom{\big|} 
  \right|_{#2} 
  }}
\begin{document}

\begin{abstract}
We develop a theory of Gopakumar-Vafa (GV) invariants for a Calabi-Yau
threefold (CY3) $X$ which is equipped with an involution $\inv$
preserving the holomorphic volume form. We define integers
$\nbgh $ which give a virtual count of the number of genus $g$
curves $C$ on $X$, in the class $\beta \in H_{2}(X)$, which are
invariant under $\inv$, and whose quotient $C/\inv$ has genus $h$.  We
give two definitions of $\nbgh $ which we conjecture to be
equivalent: one in terms of a version of Pandharipande-Thomas theory
and one in terms of a version of Maulik-Toda theory.

We compute our invariants and give evidence for our conjecture in
several cases. In particular, we compute our invariants when
$X=S\times \CC$ where $S$ is an Abelian surface with $\inv (a)=-a$ or
a $K3$ surface with a symplectic involution (a Nikulin $K3$
surface). For these cases, we give formulas for our invariants in
terms of Jacobi modular forms. For the Abelian surface case, the
specialization of our invariants $\nbgh $ to $h=0$ recovers the count
of hyperelliptic curves on an Abelian surface first computed in
\cite{BOPY}.
\end{abstract}

\maketitle 

\markboth{Counting Invariant Curves}  {Bryan and Pietromonaco}


\section{Ordinary GV invariants}\label{sec: Ordinary GV invariants}
Let $X$ be a Calabi-Yau threefold (CY3) by which we mean a smooth
quasi-projective variety over $\CC$ of dimension three with
$K_{X}\cong \OO_{X}$. In 1998 \cite{Go-Va}, Gopakumar and Vafa (GV)
defined via physics integer invariants $\nbg $ which give a
virtual count of curves $C\subset X$ of genus $g$ and class $[C]\in
H_{2}(X)$.

Mathematically, there are two conjecturally equivalent sheaf theoretic
approaches to defining $\nbg $, one by Pandharipande-Thomas
(PT) via their stable pair invariants \cite{Pandharipande-Thomas3},
and one more recently given by Maulik and Toda (MT) using perverse
sheaves \cite{Maulik-Toda}. We begin by reviewing ordinary GV theory,
and then we develop in a parallel fashion a theory of GV invariants
for CY3s with an involution.

\subsection{GV invariants via PT theory}
Let $\PT_{\beta ,n}(X)$ be the moduli space of PT pairs
\cite{Pandharipande-Thomas1}:
\begin{equation*}
\PT_{\beta ,n}(X) = \{\,\,(F,s): \,\,  s\in H^{0}(X,F), \,\,
[\supp(F)]=\beta , \,\, \chi (F)=n\}
\end{equation*}
where $F$ is a coherent sheaf on $X$ with proper support of pure
dimension 1, and $\coker (s)$ has support of dimension 0.

For any scheme $S$ over $\CC$, Behrend \cite{Behrend-micro} defined a
constructible function $\nu_{S}:S\to \ZZ$ and we define the
\emph{virtual Euler characteristic} to be the Behrend function
weighted topological Euler characteristic:

\[
\evir (S) = e(S,\nu_{S}) = \sum_{k\in \ZZ} k\cdot e\left(\nu_{S}^{-1}(k) \right).
\]

We then define the \emph{PT invariants} by 
\[
N^{\PT}_{\beta ,n}(X) = \evir (\PT_{\beta ,n}(X))
\]
and the \emph{PT partition function} by
\[
Z^{\PT}(X) = \sum_{\beta,n} N^{\PT}_{\beta ,n}(X)\,  Q^{\beta} \, y^{n}  .
\]

\begin{definition}\label{defn: GV via PT}
The Gopakumar-Vafa invariants (via PT theory) $\nPTbg$ are
defined via the following equation:
\begin{equation}\label{eqn: log(ZPT)=sum 1/k etc}
\log Z^{\PT}(X) = \sum_{k>0} \sum_{\beta,g}\, \frac{1}{k}\cdot Q^{k\beta}\cdot 
\nPTbg \cdot  \psi^{g-1}_{-(-y)^{k}}
\end{equation}
where 
\[
\psi_{x} =2+x+x^{-1}
\]
\end{definition}

\begin{remark}
Writing $\log Z^{\PT}(X)$ in the form given by the righthand side of
Equation~\eqref{eqn: log(ZPT)=sum 1/k etc} uses the fact that the
coefficient of $Q^{\beta}$ in $Z^{\PT}(X)$ is the Laurant expansion of a
rational function in $y$ which is invariant under $y \leftrightarrow
y^{-1}$ \cite{Pandharipande-Thomas1,Bridgeland-PTDT,
Toda-PTDT}. Although it isn't clear from the definition, one expects
that $\nPTbg =0$ if $g<0$.
\end{remark}

\begin{remark}
Gopakumar and Vafa gave a formula relating their invariants to the
Gromov-Witten (GW) invariants. Equation~\eqref{eqn: log(ZPT)=sum 1/k etc} is
equivalent to the Gopakumar-Vafa formula after using the expected
relationship between PT and GW invariants. 
\end{remark}

\bigskip

\subsection{GV invariants via MT theory}
We define the moduli space of  Maulik-Toda (MT) sheaves to be
\[
\M_{\beta}(X) = \{F : [\supp(F)]=\beta ,\,  \chi (F)=1 \}
\]   
where $F$ is a coherent sheaf on $X$ with proper sheaf theoretic
support of pure dimension 1 and where $F$ is \emph{Simpson stable},
which in this case is equivalent to the condition that if $F'
\subsetneq F$, then $\chi (F')\leq 0$.

The MT moduli space is a quasi-projective scheme and it has  a proper
morphism to the Chow variety given by the Hilbert-Chow morphism:
\begin{align*}
\pi : \M_{\beta}(X) &\to \Chow_{\beta}(X)\\
[F] &\mapsto \supp (F)
\end{align*}

There is a perverse sheaf $\phi^{\bullet}$ on $ \M_{\beta}(X)$ which is
locally given by the perverse sheaf of vanishing cycles associated to
the local super-potential (the moduli space is locally the critical
locus of a holomorphic function on a smooth space, the so-called
super-potential). The construction of $\phi^{\bullet}$ was done in
\cite{BBDJS}, and requires the choice of ``orientation data'' : a
squareroot of the virtual canonical line bundle on
$\M_{\beta}(X)$. Maulik and Toda conjecture the existence of a
canonical choice of orientation data (one that is compatible with the
morphism $\pi$). Using that choice, the \emph{Maulik-Toda} polynomial
is defined as follows:
\[
\MT_{\beta}(y) = \sum_{i\in \ZZ}\,  \chi (\,{}^{p} \hspace{-1pt} H^{i}
(R^{\bullet}\pi_{*} \phi^{\bullet})) y^{i}
\]
where ${}^{p} \hspace{-1pt} H^{i} (-)$ is the $i$th cohomology functor
with respect to the perverse $t$-structure. By self-duality of
$\phi^{\bullet}$ and Verdier duality, $\MT_{\beta}(y)$ is an integer
coefficient Laurent polynomial in $y$ which is invariant under
$y\leftrightarrow y^{-1}$. Noting that $\{\psi_{y}^{g} \}_{g\geq 0}$
forms an integral basis for such polynomials, we may write the MT
polynomial as follows:

\begin{definition}
The GV invariants (via MT theory) $\nMTbg $ are
defined by the equation
\[
\MT_{\beta}(y) = \sum_{g\geq 0} \nMTbg  \,\psi_{y}^{g}. 
\]
\end{definition}

The main conjecture of Maulik and Toda is
\begin{conjecture}\label{conj: nPTbg = nDTbg}
$\nMTbg  = \nPTbg$.
\end{conjecture}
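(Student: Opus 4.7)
The plan is to compare both GV definitions through the common geometric setting provided by the Hilbert-Chow morphism $\pi : \M_{\beta}(X) \to \Chow_{\beta}(X)$ and the analogous support morphism $\pi^{\PT} : \PT_{\beta ,n}(X) \to \Chow_{\beta}(X)$. Both moduli spaces are locally critical loci of holomorphic super-potentials, so the Behrend function on $\PT_{\beta ,n}(X)$ agrees with the pointwise Euler characteristic of the appropriate vanishing cycle complex. This dictionary is what turns the Euler-characteristic computation on the PT side into a cohomological computation over $\Chow_{\beta}(X)$ that one can then match term by term with the MT polynomial.

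First I would establish a sheaf-level relationship between the two moduli spaces, most likely by expressing $\PT_{\beta ,n}(X)$ as a relative Hilbert scheme of length $n - \chi (F)$ quotients of the universal Simpson-stable $F$ over $\M_{\beta}(X)$, or equivalently via a Joyce-Song type wall-crossing formula between pair-stability and Simpson-stability. Combined with the local vanishing cycle dictionary above, this should repackage $\log Z^{\PT}(X)$ as a generating function over $\Chow_{\beta}(X)$ whose contributions are cohomological invariants of fibers of $\pi$ twisted by $\phi^{\bullet}$.

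Second, I would apply the decomposition theorem to $R^{\bullet}\pi_{*}\phi^{\bullet}$. The Calabi-Yau and Simpson-stability hypotheses should force the simple summands to be IC extensions of local systems on strata of $\Chow_{\beta}(X)$ indexed by the geometric genus of the support, and the self-duality of $\phi^{\bullet}$ together with Verdier duality equips each summand with a hard-Lefschetz $\mathfrak{sl}_{2}$ action whose character in $y$ is $\psi_{y}^{g}$ for $g$ the geometric genus. Reading off multiplicities gives
\[
\MT_{\beta}(y) = \sum_{g\geq 0}\nMTbg \,\psi_{y}^{g},
\]
and matching this stratum by stratum to the series built in the first step, using that the multicover and exponential factors on the PT side come from the symmetric products already built into the Hilbert scheme construction, yields the desired equality $\nPTbg = \nMTbg$.

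The hardest step will be the second: one must show that $R^{\bullet}\pi_{*}\phi^{\bullet}$ genuinely decomposes as predicted by Gopakumar-Vafa, and this depends on the still-conjectural existence of the canonical $\pi$-compatible orientation data needed to construct $\phi^{\bullet}$ globally. One must also verify that the $\mathfrak{sl}_{2}$ action on each perverse summand is the honest Lefschetz action, not merely another action with the same numerical character. As this is the entire content of the Maulik-Toda program, a general proof of Conjecture~\ref{conj: nPTbg = nDTbg} is at present out of reach, and one should instead aim to verify the conjecture in tractable situations such as local surfaces, irreducible classes, or the product geometries $S\times \CC$ considered later in this paper, where the decomposition theorem and PT wall-crossing simplify enough to carry out the matching explicitly.
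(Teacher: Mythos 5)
The statement you were given is Conjecture~\ref{conj: nPTbg = nDTbg}, and the first thing to say is that the paper does not prove it: this is the Maulik--Toda conjecture, stated and left open, with the paper only recording that it is known in special cases (e.g.\ for the local $K3$ surface $S\times \CC$, by combining Pandharipande--Thomas's computation of $\nPTbg$ with Shen--Yin's computation of $\nMTbg$, as cited in Section~\ref{subsec: KKV formula}). So there is no proof in the paper to compare against, and your own closing assessment --- that a general proof is at present out of reach and one should instead verify instances --- is the correct one. What you have written is a research program, not a proof, and it should be labelled as such.

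Beyond that global point, two of your intermediate steps are themselves gaps rather than lemmas. First, $\PT_{\beta ,n}(X)$ is not in general a relative Hilbert scheme of points over $\M_{\beta}(X)$: that description is valid when the supporting curves are integral Gorenstein (as exploited in the elliptic-fibration example of Section~\ref{sec: additional examples}), but for reducible or non-reduced supports the passage from stable pairs to Simpson-stable sheaves requires genuine wall-crossing with Behrend-function--weighted Hall algebra input; the assertion that the multicover and exponential factors ``come from the symmetric products already built into the Hilbert scheme construction'' conceals exactly the hard part of relating $\log Z^{\PT}(X)$ to the single moduli space $\M_{\beta}(X)$. Second, the claim that the perverse summands of $R^{\bullet}\pi_{*}\phi^{\bullet}$ are indexed by geometric genus with $\mathfrak{sl}_{2}$-character $\psi_{y}^{g}$ does not follow from the decomposition theorem plus self-duality: self-duality only yields invariance under $y\leftrightarrow y^{-1}$, which is precisely why the paper can expand $\MT_{\beta}(y)$ in the basis $\{\psi_{y}^{g}\}$ as a \emph{definition} of $\nMTbg$. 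Identifying those coefficients with the PT-side invariants is the entire content of the conjecture (compare its equivalence with $P=W$ noted in Remark~\ref{rem: multiple covers of local curve and orbifold P=W}), so your second step assumes what is to be proved. The honest conclusion is that the statement is an open conjecture on which the paper builds, and no proof exists here by design.
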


\begin{remark}
Compared to the definition via PT theory, the above definition of GV
invariants is more directly tied to the geometry of curves in the
class $\beta$ and more closely matches the original physics
definition. In particular, the invariants $\nMTbg $ only involve the
single moduli space $\M_{\beta}(X)$. In contrast, the invariants
$\nPTbg $ involve a subtle combination of the PT invariants associated
to an infinite number of moduli spaces, namely the spaces $\PT_{\beta
',n}(X)$ where $\beta =k\beta '$ and $n$ is unbounded from above. From
this perspective, the conjectural formula $\nMTbg = \nPTbg $ can be
viewed as a kind of multiple cover formula for PT invariants.
\end{remark}

\subsection{The local $K3$ surface: the Katz-Klemm-Vafa (KKV) formula.}\label{subsec: KKV formula}

Suppose that $S$ is a $K3$ surface and $\beta_{d}\in H_{2}(S)$ is a
curve class with $\beta_{d}^{2}=2d$. The CY3 $X=S\times \CC $ is
sometimes called the local $K3$ surface. In this case,
Conjecture~\ref{conj: nPTbg = nDTbg} holds and $\nMT{g}{\beta_{d}} =
\nPT{g}{\beta_{d}}$ only depends on $d$ and $g$ (and not the
divisibility of $\beta_{d}$). The invariants $\nPT{g}{\beta_{d}}$ and
$\nMT{g}{\beta_{d}}$ were computed (in full generality) by
Pandharipande and Thomas \cite{Pandharipande-Thomas-KKV} and Shen and
Yin \cite[Thm~0.5]{Shen-Yin-hyperkahler} respectively.  The MT
polynomials are given by the famous KKV formula first conjectured by
Katz, Klemm, and Vafa \cite{KKV}:
\begin{equation}\label{eqn: the KKV equation}
\sum_{d=-1}^{\infty} \MT_{\beta_{d}}(y) q^{d} = - 
q^{-1}\prod_{n=1}^{\infty} (1-q^{n})^{-20} (1+yq^{n})^{-2}(1+yq^{n})^{-2} 
\end{equation}
The right hand side can also be written as $\psi_{y}\cdot
\phi_{10,1}(q,-y)^{-1}$ where $\phi_{10,1}(q,y)$ is the Fourier expansion
of the unique Jacobi cusp form of weight $10$ and index $1$.

The fact that $n_{g}(\beta_{d})$ is independent of the divisibility
of $\beta_{d}$ is a deep and unusual feature of the local $K3$
geometry.

\bigskip

\section{GV invariants for CY3s with an involution}

Let $X$ be a CY3 equipped with an involution 
\[
\inv :X\to X
\]
such that the induced action on $K_{X}$ is trivial.  The purpose of
this article is to develop a theory of GV invariants which count
$\inv$-invariant curves on $X$. Namely, we seek to define integers
$\nbgh $ which give a virtual count of genus $g$,
$\inv$-invariant curves $C\subset X$ with $[C]=\beta \in H_{2}(X)$,
and such that the genus of the quotient $C/\inv$ is $h$.

We develop this theory parallel to the presentation of the ordinary GV
invariants given in Section~\ref{sec: Ordinary GV invariants}. Namely
we define invariants $\nPTbgh$ and $\nMTbgh$ in terms of a version of
PT and MT theory respectively and we conjecture that they are
equal.

We do not currently know how to include curves that are \emph{fixed}
by $\inv$ (as opposed to merely invariant) and so we make the
following general assumption:

\begin{assumption}\label{assump: no fixed curves in beta}
Throughout, we assume that the curve classes $\beta \in H_{2}(X)$
that we consider do not admit an effective decomposition $\beta
=\sum_{i} d_{i}C_{i}$ containing an $\inv$-fixed curve $C_{i}$. 
\end{assumption}

\subsection{ $\inv$-GV invariants via PT theory.}    
We denote by $R_{+}$ and $R_{-}$ the trivial and the nontrivial
irreducible representation  of the group of order 2 and we let
$R_{\reg}=R_{+}\oplus R_{-}$ denote the regular representation.

Recall that a sheaf $F$ on $X$ is \emph{ $\inv$-invariant} if
$\inv^{*}F\cong F$ and that an \emph{$\inv$-equivariant} sheaf is an
$\inv$-invariant sheaf $F$ along with a choice of a lift of $\inv$ to
an isomorphism $\tilde{\inv}:\inv^{*}F\to F$.

If $F$ is an $\inv$-equivariant sheaf then
\[
\chi (F) = \sum_{k} (-1)^{k} H^{k}(X,F)
\]
is naturally a virtual representation and so can be written in the
form
\[
\chi (F) = nR_{\reg}+\epsilon R_{-}
\]
for some $n,\epsilon \in \ZZ$.

We define the space of $\inv$-equivariant PT pairs
\[
\PT_{\beta ,n,\epsilon }(X,\inv ) = \{\,\,(F,s): \,\,  s\in H^{0}(X,F), \,\,
[\supp(F)]=\beta , \,\, \chi (F)=nR_{\reg}+\epsilon R_{-}\}
\]
where $(F,s)$ is a PT pair such that $F$ is an $\inv$-equivariant sheaf
and $s$ is an equivariant section. We note that

\begin{equation}\label{eqn: decomp of inv PT moduli space}
\PT_{\beta ,d}(X)^{\inv} = \bigsqcup_{2n+\epsilon =d}\,\, \PT_{\beta
,n,\epsilon}(X,\inv)
\end{equation}
since the points of the $\inv$-fixed locus $\PT_{\beta ,n}(X)^{\inv}$
corresponds to $\inv$-invariant PT pairs, but each $\inv$-invariant PT
pair has a unique $\inv$-equivariant structure making the section
equivariant.

\begin{definition}
We define the $\inv$-PT invariants and the $\inv$-PT partition function by
\begin{align*}
N^{\PT}_{\beta ,n,\epsilon}(X,\inv ) &= \evir (\PT_{\beta ,n,\epsilon}(X,\inv ))\\
Z^{\PT}(X,\inv ) &= \sum_{\beta,n,\epsilon}   N^{\PT}_{\beta
,n,\epsilon}(X,\inv )\,  Q^{\beta}\,  y^{n}\,  w^{\epsilon} 
\end{align*}
where in the sum, $\beta$ is summed over the semi-group of classes
satisfying Assumption~\ref{assump: no fixed curves in beta}. 
\end{definition}

These invariants are not new: they can be recovered from the orbifold
PT invariants of the stack quotient $[X/\inv ]$ studied for example in
\cite{Beentjes-Calabrese-Rennemo}. It follows from
\cite[Prop~7.19]{Beentjes-Calabrese-Rennemo}, that the coefficient of
$Q^{\beta}$ in $Z^{\PT} (X,\inv )$ is a Laurent expansion of a
rational function in $y$ and $w$ which is invariant under
$y\leftrightarrow y^{-1}$ and $w\leftrightarrow w^{-1}$. This allows
us to make the following definition:
\begin{definition}\label{defn: nptbgh}
The $\inv$-GV invariants (via PT theory) $\nPTbgh$ (for classes
$\beta$ satisfying Assumption~\ref{assump: no fixed curves in beta})
are defined by the formula
\[
\log (Z^{\PT}(X,\inv)) = \sum_{k>0}\sum_{\beta ,g,h}
\frac{1}{k}Q^{k\beta} \cdot \nPTbgh \cdot
\psi^{h-1}_{-(-y)^{k}}\cdot \psi_{w^{k}}^{g+1-2h} 
\]
where as before $\psi_{x} = 2+x+x^{-1}$. 
\end{definition}

\begin{remark}
The number $g+1-2h$ is half the number of fixed points on a smooth
genus $g$ curve with an involution having a quotient of genus $h$.
\end{remark}

\begin{remark}
Although it is not apparent from this definition, we expect $\nPTbgh$
to have good finiteness properties, namely that for fixed $\beta$ we
expect $\nPTbgh$ to be non-zero for only a finite number of values of
$(g,h)$ and that those values should have $h\geq 0$ and $g\geq
-1$. The possible occurence of non-zero counts for $g=-1$ is due to
(for example) the possibility of invariant curves $C=C_{1}\cup C_{2}$
consisting of a disjoint $\inv$-orbit of rational curves. Such a curve
should be interpreted as having genus $-1$ via the formula $\chi
(\OO_{C})=1-g$.
\end{remark}
     
\subsection{$\inv$-GV invariants via MT theory.} We define the
moduli space of $\inv$-MT sheaves to be
\begin{equation}\label{equation:MT moduli space definition}
\M^{\epsilon}_{\beta}(X,\inv ) = \{F:\,\, [\supp (F)]=\beta , \,\,
\chi (F)=R_{\reg}+\epsilon R_{-} \} 
\end{equation}
where $F$ is an $\inv$-equivariant coherent sheaf on $X$ with proper
sheaf theoretic support of pure dimension one and where $F$ is
\emph{$\inv$-stable}:

\begin{definition}\label{defn: tau-stable MT sheaves}
We say an $\inv$-equivariant sheaf $F$ on $X$ of pure dimension one with
$\chi (F)=R_{\reg}+\epsilon R_{-}$ is \emph{$\inv$-stable} if all
$\inv$-equivariant subsheaves $F'\subsetneq F$, with $\chi
(F')=kR_{\reg} +\gamma  R_{-}$ satisfy $k\leq 1$ and if $k=1$, then
$\gamma  <\epsilon$ and $[\supp (F')]=[\supp (F)]\in H_{2}(X)$.
\end{definition}

\begin{remark}
We will show in Proposition~\ref{prop: Nironi stability is equivalent to
i-stability} that $\inv$-stability can be reformulated in terms of
Nironi stability for the corresponding sheaf on the stack quotient
$[X/\inv ]$. A consequence is that $\M^{\epsilon}_{\beta}(X,\inv )$
is a scheme and it is proper over $\Chow_{\beta}(X)$.
\end{remark}

Let 
\begin{equation}\label{equation:Hilbert-Chow morphism}
\pi^{\epsilon} : \M^{\epsilon}_{\beta}(X,\inv )\to \Chow_{\beta}(X)
\end{equation}
be the Hilbert-Chow morphism. Since $\M^{\epsilon}_{\beta}(X,\inv )$
parameterizes objects in the CY3 category of $\inv$-equivariant
coherent sheaves on $X$, there exists a perverse sheaf of vanishing
cycles\footnote{As in \cite[Defn 2.7]{Maulik-Toda}, we assume that our
orientation is \emph{strictly Calabi-Yau}.} $\phi^{\bullet}$ on
$\M^{\epsilon}_{\beta}(X,\inv )$ and we can define the $\inv$-MT
polynomial in a fashion analogous to the ordinary MT polynomial:
\begin{equation}\label{equation:tau-MT polynomial}
\MT_{\beta}(y,w) =  \sum_{i,\epsilon \in \ZZ}\,  \chi (\,{}^{p} \hspace{-1pt} H^{i}
(R^{\bullet}\pi^{\epsilon }_{*} \phi^{\bullet})) \, y^{i} \, w^{\epsilon}.
\end{equation}
As before, self-duality and Verdier duality imply that
$\MT_{\beta}(y,w)$ is a Laurent polynomial in $y$ invariant under
$y\leftrightarrow y^{-1}$. We conjecture that in general
$\MT_{\beta}(y,w)$ is also a Laurent invariant polynomial in $w$
invariant under $w\leftrightarrow w^{-1}$. Assuming this conjecture,
we can write $\MT_{\beta}(y,w)$ as a polynomial in $\psi_{y}$ and
$\psi_{w}$ and make the following definition.
\begin{definition}\label{defn: tau-GV via MT}
The $\inv$-GV invariants (via MT theory) $\nMTbgh$ (for classes
satisfying Assumption~\ref{assump: no fixed curves in beta}) are
defined by the formula:
\[
\MT_{\beta}(y,w) = \sum_{g,h} \nMTbgh \,
\psi_{y}^{h}\,\psi_{w}^{g+1-2h}. 
\]
\end{definition}

Our main conjecture is that our two definitions of $\inv$-GV
invariants are equivalent.
\begin{conjecture}\label{conj: two tau-GV defns are the same nPT=nMT}
$\nPTbgh =\nMTbgh$. 
\end{conjecture}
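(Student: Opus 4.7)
The plan is to reduce to the stack quotient $\mathcal{X}=[X/\inv]$, where $\inv$-equivariant coherent sheaves on $X$ correspond to coherent sheaves on $\mathcal{X}$ and Assumption~\ref{assump: no fixed curves in beta} guarantees that all supports we care about avoid the stacky locus in a way compatible with the equivalence. Under this dictionary, both $\PT_{\beta,n,\epsilon}(X,\inv)$ and $\M^{\epsilon}_{\beta}(X,\inv)$ become PT-style and stable-sheaf moduli spaces on the Calabi-Yau orbifold $\mathcal{X}$, with the two variables $(y,w)$ tracking the bidegree coming from the two irreducible representations of $\ZZ/2$. The first step, which I would carry out as a warm-up, is the one promised in the remark after Definition~\ref{defn: tau-stable MT sheaves}: identify $\inv$-stability with Nironi stability on $\mathcal{X}$ for an appropriate choice of polarizing generator, so that $\M^{\epsilon}_{\beta}(X,\inv)$ becomes an honest projective moduli scheme of Nironi-stable sheaves over $\Chow_{\beta}(X)$.

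With this orbifold interpretation in hand, I would next apply the orbifold DT/PT wall-crossing of Beentjes-Calabrese-Rennemo, combined with Joyce-Song style motivic/Behrend-weighted wall-crossing, to rewrite $\log Z^{\PT}(X,\inv)$ as a sum of multiple-cover contributions of BPS-type invariants attached to the Nironi-stable moduli. The key combinatorial check is that the wall-crossing output takes exactly the shape of the righthand side of Definition~\ref{defn: nptbgh}: each primitive class $\beta$ contributes through Adams-type $k$-fold covers $Q^{\beta}\mapsto Q^{k\beta}$, $y\mapsto -(-y)^{k}$, $w\mapsto w^{k}$, and the fixed-point-count exponent $g+1-2h$ arises as the difference between the total $\chi$-weight and twice the $R_{-}$-weight of the BPS contribution. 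This reduces the conjecture to identifying the BPS coefficients extracted from PT wall-crossing with the Poincar\'e polynomial of the perverse cohomology of $R\pi^{\epsilon}_{*}\phi^{\bullet}$.

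The third and most substantial step is to adapt the Maulik-Toda local super-potential analysis to the $\inv$-equivariant setting. One needs an equivariant enhancement of \cite{BBDJS} producing $\phi^{\bullet}$ compatibly with $\pi^{\epsilon}$, together with an equivariant version of the Maulik-Toda structure theorem matching the stalkwise vanishing cycle computation to the local PT vertex. The parameter $w$ must be shown to arise from the $\ZZ/2$-weight decomposition of the vanishing cycles, and the conjectured $w\leftrightarrow w^{-1}$ symmetry needed to write $\MT_{\beta}(y,w)$ in the $\psi_{w}$-basis should be established from equivariant Verdier duality combined with a Serre-duality twist that swaps $R_{+}\leftrightarrow R_{-}$ on the level of the virtual canonical bundle (this is where the strictly Calabi-Yau orientation condition from the footnote plays its role).

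The main obstacle is this final step: it contains, as the special case $w=1$, the ordinary Maulik-Toda Conjecture~\ref{conj: nPTbg = nDTbg}, which is itself open in general. Realistically, I would expect to prove Conjecture~\ref{conj: two tau-GV defns are the same nPT=nMT} only conditionally on (an orbifold version of) the Maulik-Toda conjecture, so the honest target of such a proof is a reduction of Conjecture~\ref{conj: two tau-GV defns are the same nPT=nMT} to the ordinary MT conjecture for the orbifold $[X/\inv]$, together with an unconditional verification in the cases $X=S\times\CC$ where the local $K3$ / Abelian surface geometry provides enough rigidity (via hyperk\"ahler deformation invariance or the techniques of \cite{Shen-Yin-hyperkahler}) to compute both sides explicitly and compare with the Jacobi-form formulas announced in the abstract.
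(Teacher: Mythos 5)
You are right that this statement is a \emph{conjecture}: the paper does not prove $\nPTbgh=\nMTbgh$ in general, and your closing paragraph correctly identifies why no blind attempt could --- the statement contains Conjecture~\ref{conj: nPTbg = nDTbg} as a degenerate case, so the honest deliverable is a verification in special geometries. Your first step (identifying $\inv$-stability with Nironi stability on $[X/\inv]$, Proposition~\ref{prop: Nironi stability is equivalent to i-stability}) is exactly what the paper does. But your middle step is where the proposal breaks down as a proof strategy: rewriting $\log Z^{\PT}(X,\inv)$ via Joyce--Song wall-crossing as multiple-cover contributions of BPS invariants attached to the Nironi-stable moduli, and then matching those coefficients to the perverse cohomology of $R\pi^{\epsilon}_{*}\phi^{\bullet}$, is not a ``key combinatorial check'' --- it \emph{is} the conjecture. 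The wall-crossing between pairs and semistable sheaves produces generalized DT invariants for all $\chi$, and the assertion that these organize into the $\psi^{h-1}_{-(-y)^k}\psi^{g+1-2h}_{w^k}$ multiple-cover shape governed by a perverse filtration on a single moduli space is precisely the open content of Maulik--Toda, equivariant or not. So step two cannot be carried out, and step three (the equivariant enhancement of the vanishing-cycle structure theorem) is likewise not something the paper attempts.

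For the cases the paper actually proves (Theorem~\ref{thm: nPT=nMT for SxC}, plus the local curve, the elliptic fiber class, and the conifold), the mechanism is different from what you sketch and is worth internalizing. On the PT side the paper does not wall-cross on the orbifold at all: it invokes the Donaldson--Thomas Crepant Resolution Conjecture (proven in \cite{Beentjes-Calabrese-Rennemo}) to write $Z^{\PT}([X/\inv])$ as $Z^{\PT}(Y)/Z^{\PT}_{\exc}(Y)$ for the crepant resolution $Y=\Sresolve\times\CC$, a local $K3$, where the KKV formula is a theorem; the two-variable structure then comes from the Garbagnati--Sarti description of $\Pic(\Sresolve)$ and some theta-function identities. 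On the MT side it identifies the $\inv$-stable moduli with Lim--Rota/Bridgeland-stable objects on $[S/\inv]$, transports them by Fourier--Mukai to $\Sresolve$, and uses Bayer--Macri to deform to Simpson-stable sheaves, where Shen--Yin applies. The equality of the two sides is then inherited from the known $\nPT{g}{-}=\nMT{g}{-}$ on the local $K3$, not proved directly. Your appeal to ``hyperk\"ahler deformation invariance or the techniques of \cite{Shen-Yin-hyperkahler}'' points in the right direction for the MT side, but without the crepant resolution step you have no way to compute either side, and no mechanism forcing the two answers to agree.
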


\bigskip

\subsection{Examples: local Abelian surfaces and local Nikulin $K3$
surfaces.  } One of the main results of this paper are various
$\inv$-equivariant versions of the KKV formula. Namely, we compute our
invariants and prove our conjecture for the case $X=S \times \CC $
where $S$ is either an Abelian or $K3$ surface and where $\inv$ acts
trivially on $\CC $ and symplectically on $S$.

For the case of an Abelian surface, the involution is the natural one
arising from the group structure: $\inv (a) = -a$. A $K3$ surface
equipped with a symplectic involution is called a Nikulin surface and
there are two distinct deformation types which we call Type (I) and
Type (II) (see Definition~\ref{defn: Type I and Type II}).

As with the ordinary GV invariants of a local $K3$ surface, our
invariants admit a surprising lack of dependency on the divisibility
of the curve class and they are given by formulas involving Jacobi
modular forms:

\begin{theorem}\label{thm: Formulas for nPTgh for X=SxC}
Let $X=S\times \CC$ where $S$ is an Abelian surface or a Nikulin $K3$
surface, and let $\beta\in H_{2}(S)$ be an effective invariant curve
class with $\beta^{2}=2d$. Then:
\begin{enumerate}
\item if $S$ is a Type (II) Nikulin surface, $\nPT{g,h}{\beta }$ only
depends on $(g,h,d)$.  In particular, it doesn't depend on the
divisibility of $\beta$.  \vskip2ex
\item if $S$ is an Abelian surface or a Type (I) Nikulin surface,
$\nPT{g,h}{\beta }$ only depends on $(g,h,d)$ as well as the parity of
the divisibility of $\beta$.
\end{enumerate}
We denote these invariants by $n_{g,h}(d;\type)$ where $\type \in
\{\evAb ,\oddAb ,\evNiI ,\oddNiI ,\NiII \}$ distinguishes the cases in
the obvious way.  Then the invariants are determined from the formula:
\[
\sum_{g,h} n_{g,h}(d; \type) \psi_{y}^{h-1}\psi_{w}^{g+1-2h} =
\left[\frac{\Theta_{T}(q^{2},w)}{\phi_{10,1}(q^{2},-y)}
\right]_{q^{d}}
\]
where $[\dotsb ]_{q^{d}}$ denotes the coefficient of $q^{d}$ in the
expression $[\dotsb ]$.

Moreover, $T$ is a lattice or shifted lattice depending on the type,
$\Theta_{T}(q^{2}, w)$ is an explicitly determined Jacobi theta
function (see Theorem \ref{thm: main statement about nPT for
Abelian/Nikulin surfaces}), and $\phi_{10,1}(q,y)$ is the unique
Jacobi cusp form of weight $10$ and index $1$.  In particular, for
types $\oddAb$ and $\oddNiI$ we get infinite product formulas:
\begin{align*}
\sum_{g,h,d}
n_{g,h}(d;\oddAb) \,\psi_{y}^{h}\,\psi_{w}^{g-1-2h}\, q^{d} &=
-4\prod_{n=1}^{\infty}
\frac{(1+q^{n})^{8}(1+wq^{n})^{4}(1+w^{-1}q^{n})^{4}}{(1-q^{2n})^{4}
(1+yq^{2n})^{2} (1+y^{-1} q^{2n})^{2}}\,\, ,\\
 \sum_{g,h,d} n_{g,h}(d;\oddNiI) \,\psi_{y}^{h}\,\psi_{w}^{g-2h}\,
q^{d+1} &=\,\,\,-
\prod_{n=1}^{\infty}\frac{(1+q^{n})^{4}(1+wq^{n})^{2}(1+w^{-1}q^{n})^{2}}{(1-q^{2n})^{12}(1+yq^{2n})^{2}(1+y^{-1}q^{2n})^{2}}\,\,.
\end{align*}
\end{theorem}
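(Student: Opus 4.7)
The plan is to compute $Z^{\PT}(X,\inv)$ directly by reducing the three-fold problem to an equivariant surface computation. The first step is to interpret $\inv$-equivariant PT pairs on $X=S\times\CC$ as orbifold PT pairs on the stack $[S/\inv]\times\CC$ in the spirit of \cite{Beentjes-Calabrese-Rennemo}, and to use the fact that proper-supported PT pairs with curve class pushed forward from $S$ are supported on slices $S\times\{p\}$. Factoring out the $\CC$-translation (or equivalently, using the $\CC^{*}$-localization on the $\CC$-factor and the product structure of the Behrend function) reduces $\evir(\PT_{\beta,n,\epsilon}(X,\inv))$ to a weighted Euler characteristic of moduli of $\inv$-equivariant pairs on $S$ itself, exactly parallel to the reduction underlying the ordinary local $K3$ KKV formula. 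The $\inv$-equivariant refinement simply tracks the $R_{-}$-multiplicity of $\chi(F)$ via the extra variable $w^{\epsilon}$.

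The second step is the geometric analysis on $S$. The moduli of $\inv$-equivariant PT pairs on $S$ fibers over the $\inv$-invariant linear system $|\beta|^{\inv}$ with fibers given by $\inv$-equivariant compactified Jacobians of curves in $|\beta|$, and a Behrend-function calculation rewrites $Z^{\PT}(X,\inv)$ as a generating function of Euler characteristics of relative Hilbert schemes over $|\beta|^{\inv}$ with their natural $\inv$-action. The three surface geometries (Abelian, Nikulin Type (I), Nikulin Type (II)) give different pictures for $|\beta|^{\inv}$ and for the $\inv$-action on the Jacobian fibers; in the Abelian and Type (I) settings the relevant contribution splits further by the parity of the divisibility of $\beta$, producing the five type labels $\evAb$, $\oddAb$, $\evNiI$, $\oddNiI$, $\NiII$.

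The third step identifies the resulting series with Jacobi modular forms. The ``unweighted'' part collapses to the KKV factor $\phi_{10,1}(q^{2},-y)^{-1}$, obtained by a G\"ottsche-type computation for symmetric products of the $\inv$-invariant geometry on $S$, with the rescaling $q\mapsto q^{2}$ reflecting the index-two relationship between $|\beta|^{\inv}$ and $|\beta|$. The equivariant contribution assembles into the theta series $\Theta_{T}(q^{2},w)$ on a lattice (or shifted lattice) $T$ encoding how the $\inv$-fixed locus of $S$ decomposes cohomology into $R_{\pm}$-eigenspaces and contributes to the character $w^{\epsilon}$, with the shifted-lattice phenomenon in the odd-divisibility cases capturing the fact that odd and even invariant classes meet the fixed locus differently. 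Extracting $\nPT{g,h}{\beta}$ from $Z^{\PT}$ via Definition~\ref{defn: nptbgh} then yields the claimed formula, and in the $\oddAb$ and $\oddNiI$ cases substituting the standard infinite-product expansions for $\phi_{10,1}$ and the relevant theta series gives the product formulas displayed in the statement.

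The principal obstacle is establishing the divisibility-independence (up to parity in the Abelian and Type (I) cases). For the ordinary local $K3$ KKV formula this is the deep Pandharipande-Thomas theorem, and we need an equivariant refinement. The Type (II) case is cleanest because the $\inv$-equivariant structure is rigid enough that no finer divisibility data can survive, and a version of the Pandharipande-Thomas argument on the quotient runs directly. For Abelian and Type (I), the $\inv$-fixed points on $S$ interact non-trivially with multiple-cover contributions from classes $k\beta'$, and the hard technical work is showing that only the parity of the divisibility of $\beta$ (and not the divisibility itself) controls how these multiple covers intersect the fixed locus, so that exactly two theta-lattice classes suffice.
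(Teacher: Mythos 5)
Your overall framing---pass to the orbifold $[S/\inv]\times\CC$, reduce to a surface computation, and assemble the answer into $\Theta_{T}(q^{2},w)/\phi_{10,1}(q^{2},-y)$---points in the right direction, but the route you take through the middle is not the paper's, and it leaves the central difficulty unresolved. The paper does \emph{not} compute directly on $S$ via invariant linear systems, equivariant compactified Jacobians, and relative Hilbert schemes. Instead it invokes the Donaldson--Thomas Crepant Resolution Conjecture (proved by Beentjes--Calabrese--Rennemo) to write $Z^{\PT}([X/\inv]) = Z^{\PT}(Y)/Z^{\PT}_{\exc}(Y)$ where $Y=\Sresolve\times\CC$ and $\Sresolve\to S/\inv$ is the Kummer $K3$ or Nikulin resolution. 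This converts the whole problem into \emph{ordinary} PT theory on a local $K3$, where the full KKV formula of Pandharipande--Thomas (valid for imprimitive classes) is already a theorem. The only remaining work is lattice-theoretic: determining which classes $m\gamma_{d}+v$ are integral in $\Pic(\Sresolve)$ (via Garbagnati--Sarti's description of the Kummer and Nikulin lattices), which yields the sets $\Gamma_{m,d}$ depending only on the parities of $m$ and $d$, and then proving the theta-function identities.

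The genuine gap in your proposal is exactly the point you flag as ``the principal obstacle.'' Your direct approach on $S$ would, at best, reproduce a primitive-class computation; the divisibility independence (up to parity) is the deep content of the theorem, and for the ordinary local $K3$ it required the full strength of Pandharipande--Thomas's multiple-cover analysis. You say an ``equivariant refinement'' of that argument is needed and that ``the hard technical work is showing that only the parity... controls'' the multiple covers---but you offer no mechanism for carrying this out, and doing so from scratch would be a major undertaking. The paper's insight is precisely that no new multiple-cover theorem is needed: the DT-CRC transfers all classes $m\beta_{d}$ at once to classes $m\gamma_{d}+v$ on $\Sresolve$, where $\nPT{h}{m\gamma_{d}+v}$ depends only on $(m\gamma_{d}+v)^{2}=m^{2}d+v^{2}$ by the known KKV formula, and the parity dependence emerges purely from the combinatorics of $\Gamma_{m,d}$. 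Without importing that (or an equivalent) mechanism, your proof does not close. A secondary, smaller issue: your claim that the $q\mapsto q^{2}$ rescaling reflects an ``index-two relationship between $|\beta|^{\inv}$ and $|\beta|$'' is not the actual source; it comes from $\gamma_{d}^{2}=d$ versus $\beta_{d}^{2}=2d$ under pushforward to the quotient, so the KKV coefficient is extracted at $q^{\frac{1}{2}m^{2}d}$ and the substitution $q\to q^{2}$ merely clears the half-integral exponents.
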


\begin{remark}
The specialization of the invariants $n_{g,h}(\beta )$ to $h=0$ count
$\inv$-invariant hyperelliptic curves. The problem of counting the
number of genus $g$ hyperelliptic curves in a primitive class
$\beta_{d}$ on an Abelian surface $A$ was first considered by Rose
\cite{Rose-2014} and then solved by Bryan-Oberdieck-Pandharipande-Yin
\cite{BOPY}. We may specialize our invariants $\nPT{g,h}{d;\oddAb}$ to
$h=0$ by setting $y=-1$. The above formula then becomes
\[
\sum_{d=0}^{\infty} \sum_{g>0} n_{g,0}(d;\oddAb) \psi_{w}^{g-1}\, q^{d} =
-4\prod_{n=1}^{\infty} \frac{(1+wq^{n})^{4}(1+w^{-1}q^{n})^{4}}{(1-q^{n})^{8}}
\]
We note that the invariant $n_{g,0}(d;\oddAb)$ is equal to
$\mathsf{h^{Hilb}_{g,A,\beta_{d}}}$ in the notation of \cite{BOPY} and
the above formula is equivalent to the equation in Proposition 4 of
\cite{BOPY}. 
\end{remark}
\begin{remark}
For the case where $\beta_{d}$ is the primitive class on an Abelian
surface, our invariants $n_{g,h}(d;\oddAb )$ are refinements of the
invariants $\mathsf{{n}}_{d}(h)$ considered in
\cite{Pietromonaco-GHilbA}. The relationship is given by
\[
\mathsf{n}_{d}(h) = -\sum_{g} 4^{g-2h}\cdot  n_{g,h}(d, \oddAb).
\]
\end{remark}

Our main technique to prove Theorem~\ref{thm: Formulas for nPTgh for
X=SxC} / Theorem~\ref{thm: main statement about nPT for
Abelian/Nikulin surfaces} is to use the Donaldson-Thomas Crepant
Resolution Conjecture (DT-CRC)
\cite{Bryan-Cadman-Young,Beentjes-Calabrese-Rennemo} to compute
orbifold PT invariants in terms of the crepant resolution which in
this case is a local $K3$ surface. We can then apply the KKV formula
making crucial use of the description of the Picard lattice of Kummer
$K3$ surfaces and Nikulin resolutions given by Garbagnati-Sarti
\cite{Garbagnati-Sarti-Kummer-surfaces-2016,
Garbagnati-Sarti-even-set-2008}. This is carried out in
Section~\ref{sec: Local Abelian/Nikulin Surfaces (PT theory)}.

In Section~\ref{sec: Local Abelian and Nikulin Surfaces (MT theory)},
we use the derived McKay correspondence to compute the MT versions of
our invariants $\nMT{g,h}{\beta} $ for $X=S\times \CC$. The final
result is

\begin{theorem}\label{thm: nPT=nMT for SxC}
The two definitions of $\inv$-GV
invariants coincide for $X=S\times \CC$ for $S$ an Abelian surface or
a Nikulin $K3$ surface:
\[
\nPTbgh =\nMTbgh .
\]
\end{theorem}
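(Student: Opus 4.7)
The plan is to compute $\nMTbgh$ for $X = S \times \CC$ directly via the derived McKay correspondence and then match the resulting generating function against the explicit formula for $\nPTbgh$ already obtained in Theorem~\ref{thm: Formulas for nPTgh for X=SxC}. The key input is the Bridgeland--King--Reid equivalence
\[
\Phi \colon D^{b}(Y) \xrightarrow{\ \sim\ } D^{b}([X/\inv]),
\]
where $Y = \Sresolve \times \CC$ is the crepant resolution of $[X/\inv]$ and $\Sresolve$ is the Kummer or Nikulin resolution of $S/\inv$. The derived category of $\inv$-equivariant coherent sheaves on $X$ is identified with $D^{b}([X/\inv])$ by equivariantization, and both sides carry compatible strict CY3 structures.

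The first step is to translate the MT moduli problem across $\Phi$. By Proposition~\ref{prop: Nironi stability is equivalent to i-stability}, $\inv$-stability on $X$ is equivalent to Nironi stability on $[X/\inv]$. Matching Nironi-stable $1$-dimensional sheaves on the orbifold with Simpson-stable $1$-dimensional sheaves on $Y$ under $\Phi$ should yield an isomorphism of schemes
\[
\bigsqcup_{\tilde\beta \mapsto \beta} \M_{\tilde\beta}(Y) \;\cong\; \bigsqcup_{\epsilon} \M^{\epsilon}_{\beta}(X, \inv),
\]
compatible with the Hilbert--Chow morphisms, where the left-hand union is over classes $\tilde\beta \in H_{2}(Y)$ pushing forward to $\beta \in H_{2}(X)$ under $Y \to [X/\inv] \to X/\inv$, and the correspondence between $\tilde\beta$ and $\epsilon$ is encoded by the intersection of $\tilde\beta$ with the exceptional curves on $\Sresolve$.

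The second step is to match the perverse sheaves of vanishing cycles. Since $\Phi$ is a strict CY3 equivalence and we are using the canonical orientation data on each side, the local presentations of both moduli spaces as critical loci of holomorphic superpotentials are identified under $\Phi$, and therefore so are the perverse sheaves $\phi^{\bullet}$. Combined with the compatibility with the Hilbert--Chow morphisms, this yields
\[
\MT_{\beta}(y, w) \;=\; \sum_{\tilde\beta \mapsto \beta} w^{\epsilon(\tilde\beta)}\, \MT_{\tilde\beta}(y).
\]
Applying Shen--Yin's theorem for the local $K3$ $Y$ \cite{Shen-Yin-hyperkahler}, each $\MT_{\tilde\beta}(y)$ depends only on $\tilde\beta^{2}$ and is extracted from $\psi_{y} \cdot \phi_{10,1}(q,-y)^{-1}$. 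Because $\epsilon(\tilde\beta)$ is linear in $\tilde\beta$ while $\tilde\beta^{2}$ is quadratic, the $w$-weighted sum over the coset of $\tilde\beta$ assembles into the Jacobi theta function $\Theta_{T}(q^{2}, w)$ featured in Theorem~\ref{thm: main statement about nPT for Abelian/Nikulin surfaces}; this is the very same lattice computation that underlies the DT crepant resolution argument on the PT side, so the two generating functions agree term by term and Theorem~\ref{thm: Formulas for nPTgh for X=SxC} yields $\nMTbgh = \nPTbgh$.

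The main obstacle is the second step: verifying that the canonical orientation data on $\M_{\beta}^{\epsilon}(X, \inv)$ and on $\M_{\tilde\beta}(Y)$ are compatible under $\Phi$, so that the perverse sheaves of vanishing cycles themselves, and not merely their Euler characteristics, are identified. This comes down to tracking square roots of virtual canonical bundles across the McKay equivalence in a way compatible with the morphisms to the Chow variety, and is the heart of the argument.
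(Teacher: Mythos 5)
Your overall strategy---compute the MT side by transporting the moduli problem across the derived McKay correspondence, reduce to the KKV/Shen--Yin input on the local $K3$, and match against the PT formula through the same lattice-theoretic bookkeeping---is the strategy of the paper. But your Step 1 contains a genuine gap. You assert that the BKR equivalence $\Phi$ "should yield an isomorphism of schemes" between the Nironi-stable moduli on $[X/\inv]$ and the Simpson-stable moduli on $Y$. This is not true, and no such isomorphism is available: the Fourier--Mukai transform of a Nironi-stable one-dimensional sheaf on $[S/\inv]$ is in general a genuine complex on $\Sresolve$, not a Simpson-stable sheaf, so $\Phi$ does not carry one moduli functor to the other. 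What the paper actually proves (Proposition~\ref{prop: Nironi stable sheaves in S/i is deformation equivalent to MT sheaves on Shat}) is only a \emph{deformation equivalence}, and obtaining even that requires real input: Nironi $\delta$-stability must first be realized as the large-volume limit of the Lim--Rota Bridgeland stability conditions on $[S/\inv]$; $\Phi$ then sends these to Bridgeland stability conditions on $\Sresolve$ lying in the distinguished component (this uses Lim--Rota's analysis of the stability of $\OO_{\pt}$); and finally Bayer--Macri's results identify all such Bridgeland moduli spaces up to deformation equivalence compatible with the Lagrangian fibration to Chow. Without this chain your claimed identification of moduli spaces, and hence your generating-function identity, is unsupported. (A separate small omission: you also need the analogue of Lemma~\ref{lem: MT sheaves on SxC are supported on Sxt} to know that every $\inv$-stable MT sheaf on $S\times\CC$ lives on a single slice $S\times\{t\}$ before you can work on the surface at all.)

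Relatedly, you locate the "heart of the argument" in matching the perverse sheaves of vanishing cycles and the orientation data under $\Phi$, but the paper never needs to compare vanishing-cycle sheaves across the equivalence directly. Once the deformation equivalence compatible with the Hilbert--Chow morphism is in hand, the moduli spaces in question are smooth holomorphic symplectic manifolds, the vanishing-cycle complex is the shifted constant sheaf, and the MT polynomial is a deformation invariant of the pair (moduli space, map to Chow). So the difficulty sits exactly where your proposal is vaguest---the stability-condition comparison---rather than in the orientation data.
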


\bigskip

\subsection{Further examples} In Section~\ref{sec: additional
examples}, we give two other examples illustrating our theory and
providing evidence of our conjecture. For the case where $X\to B$ is
an elliptically fibered CY3 over a surface $B$ with integral fibers
and $\inv :X\to X$ is the composition of an involution on $B$ and
fiberwise multiplication by $-1$, we compute both $\nPT{g,h}{\beta}$
and $\nMT{g,h}{\beta}$ completely for $\beta =[F]$, the class of the
fiber. Let $C\subset B$ be the fixed locus of the involution on the
base and let $S=\restriction{X}{C}$ be the restriction of $X\to B$ to
$C$. The result is the following:
\begin{theorem}
Let $(X,\inv )$ be an elliptically fibered CY3 with notation as
above. Then for all $g$ and $h$, $\nPT{g,h}{[F]}=\nMT{g,h}{[F]}$ and they
are given by
\[
n_{g,h}([F]) = \begin{cases} -e(C) &\text{if $(g,h)=(1,0)$,}\\
e\left(S \right)&\text{if $(g,h)=(0,0)$,}\\
0&\text{ otherwise.}
\end{cases}
\]
\end{theorem}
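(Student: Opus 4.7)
The plan is to reduce the computation to a fiberwise analysis over $C \subset B$. Since $[F]$ is the class of an (integral) elliptic fiber, any coherent sheaf with support of class $[F]$ is supported on a single fiber $E_c$, and $\inv$-invariance of the sheaf forces $c$ to lie in the fixed locus $C$. Hence both Hilbert-Chow morphisms factor through $C$, and both moduli spaces decompose into strata according to whether $E_c$ is smooth, nodal, or cuspidal. The integrality assumption also precludes any nontrivial effective decomposition $[F] = k\beta'$ with $\beta'$ admissible, so only the $k=1$ term contributes in Definition \ref{defn: nptbgh}; this reduces the extraction of $\nPT{g,h}{[F]}$ to reading off coefficients of $Q^{[F]}$ in $Z^\PT(X,\inv)$, and the analogous extraction for $\nMT{g,h}{[F]}$ from $\MT_{[F]}(y,w)$ is likewise direct.

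For the MT side, I would analyze $\M^\epsilon_{[F]}(X,\inv)$ stratum by stratum. Over the open stratum of smooth fibers in $C$, the involution $\inv|_{E_c}$ acts by $-1$ on $H^0(E_c,K_{E_c})$ and hence $\chi(\OO_{E_c}) = R_+ - R_- \in R(\ZZ/2)$; the $\inv$-stable sheaves of the required equivariant Euler characteristic are then $\OO_{E_c}(p)$ with $p$ a 2-torsion point, equipped with a choice of equivariant lift, assembling into a finite \'etale cover of the smooth part of $C$. Over each singular fiber I would describe the additional equivariant stable sheaves using the compactified Jacobian. I would then compute the perverse sheaf of vanishing cycles $\phi^\bullet$ on each stratum and assemble $\MT_{[F]}(y,w)$. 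The generic smooth-fiber stratum should contribute the $(g,h)=(1,0)$ term, reflecting a genus one curve whose quotient $E_c/\inv \cong \PP^1$ has genus zero with four branch points, and summed over $C$ this yields $-e(C)$. Singular fibers contribute only to $(g,h)=(0,0)$, and since smooth fibers contribute zero to $e(S) = \sum_c e(E_c)$, the total is exactly $e(S)$.

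For the PT side, I would apply the same stratification to $\PT_{[F],n,\epsilon}(X,\inv)$ and compute $\evir$ of each stratum. An alternative is to invoke the translation to orbifold PT invariants of the Deligne-Mumford stack $[X/\inv]$ via \cite{Beentjes-Calabrese-Rennemo}, reducing to a known fiber-class computation for CY3 orbifold elliptic fibrations. Either route should yield the same numerical answer as the MT calculation because both ultimately reduce to the same local model on each fiber of the elliptic fibration, establishing $\nPT{g,h}{[F]} = \nMT{g,h}{[F]}$.

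The main obstacle will be the accurate computation at nodal and cuspidal fibers, where the moduli spaces can fail to be smooth and the local super-potential is nontrivial, so one must determine the Behrend function (for PT) or the vanishing cycle sheaf (for MT) with care. Once the smooth-fiber contribution is established, the singular-fiber contributions should be essentially forced by the constraint that their total Euler characteristic matches $e(S)$, and the matching $\nPT{g,h}{[F]} = \nMT{g,h}{[F]}$ follows from the pointwise equivalence of the two calculations over each $c \in C$.
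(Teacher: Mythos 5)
Your overall shape is right---reduce to fibers over $C$, identify $2$-torsion, land on $-e(C)$ and $e(S)$---but there is a genuine gap, and one of your structural claims is false. The paper's proof never needs a stratum-by-stratum vanishing-cycle or Behrend-function analysis at nodal and cuspidal fibers, because the moduli spaces turn out to be globally \emph{smooth}: $\PT_{[F],d}(X)\cong \PP (V_{d})$ is a projective bundle over $\Jac^{d}(X/B)\cong X$, so its $\inv$-fixed locus is a disjoint union of projective bundles $\PP (V_{d}^{\pm})|_{C_{i}}$ over the smooth curves $C_{0}\cong C$ (the zero section) and $C_{1}$ (the degree-$3$ cover of $C$ by nontrivial $2$-torsion, ramified exactly over the singular fibers); and $\M^{\epsilon}_{[F]}(X,\inv )\cong \PT_{[F],1,\epsilon}(X,\inv )$ is a disjoint union of copies of $C_{0}$ and $C_{1}$. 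The singular fibers enter the answer only through the ramification of $C_{1}\to C$, i.e.\ through the identity $e(S)=3e(C_{0})-e(C_{1})$, not through any singularity of the moduli space or nontrivial super-potential. This is the structural input your proposal is missing, and without it the ``main obstacle'' you correctly identify at the singular fibers is never actually surmounted.

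Second, your accounting of which $(g,h)$ each stratum feeds is incorrect. In the final answer the $(1,0)$ coefficient is $-e(C_{0})=-e(C)$, where $C_{0}$ is \emph{all} of $C$, including the points lying under singular fibers; it does not come from the open smooth-fiber locus $C^{\circ}$ alone (that would give $-e(C^{\circ})\neq -e(C)$ whenever singular fibers exist), and the singular fibers do not contribute ``only to $(0,0)$.'' Relatedly, describing the generic MT sheaves as $\OO_{E_{c}}(p)$ with $p$ a $2$-torsion point captures only one value of $\epsilon$; the moduli space ranges over $\epsilon \in \{-2,\dotsc ,2\}$ with components indexed by subsets $T\subset \{0,1,2,3\}$ of $2$-torsion points subject to a group-law constraint, and it is precisely this bookkeeping that separates the coefficient of $\psi_{w}^{2}$ from that of $\psi_{w}$. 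Finally, the closing appeal---that the singular-fiber contributions are ``forced by the constraint that their total Euler characteristic matches $e(S)$''---is circular: that the total equals $e(S)$ is the conclusion to be proved, not an available constraint; and there is no off-the-shelf orbifold fiber-class computation in \cite{Beentjes-Calabrese-Rennemo} to which the PT side can be outsourced.
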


We also consider the case where $X=\operatorname{Tot}(\OO (-1)\oplus
\OO (-1))$ is the conifold, $\inv :X\to X$ is the Calabi-Yau
involution which acts non-trivially on the base, and $C\subset X$ is
the zero section. We use the orbifold
topological vertex to compute $\nPT{g,h}{d[C]}$ and we use stability
considerations to compute $\nMT{g,h}{d[C]}$. The result is
\begin{proposition}\label{prop: nPT=nMT=1 for (g,h,d)=1 for local
football}
For $X$ the conifold with $\inv$ as above,
\[
\nPT{g,h}{d[C]} =\nMT{g,h}{d[C]}= \begin{cases}
1&\text{ if $(g,h,d)=(0,0,1)$,}\\
0&\text{ otherwise.}
\end{cases}
\]
\end{proposition}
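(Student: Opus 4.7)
The plan is to treat the two invariants separately, as the statement suggests: compute $\nPT{g,h}{d[C]}$ via the orbifold topological vertex on the stack quotient $\mathcal{X}=[X/\inv]$, and compute $\nMT{g,h}{d[C]}$ by a direct analysis of $\inv$-stable sheaves.

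\textbf{Geometric setup.} The involution restricts to a toric involution of $C\cong \PP^{1}$ with two fixed points $p_{1},p_{2}$, and the Calabi-Yau condition forces the induced action on each normal fiber $\CC^{2}$ at $p_{i}$ to have weights $(1,-1)$. Hence $X^{\inv}$ is a disjoint union of two non-compact affine lines, every class $d[C]$ satisfies Assumption~\ref{assump: no fixed curves in beta}, and $\mathcal{X}$ is a toric Calabi-Yau 3-orbifold whose unique compact curve is the football $\PP^{1}_{2,2}$, with local models $\CC\times[\CC^{2}/\pm 1]$ near each stacky point.

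\textbf{PT side.} The $\inv$-equivariant PT invariants of $X$ coincide with the orbifold PT invariants of $\mathcal{X}$, with $(y,w)$ recording the two characters of $\ZZ/2$ on the equivariant Euler characteristic. I would apply the Bryan-Cadman-Young orbifold topological vertex to $\mathcal{X}$: the only compact edge in its toric diagram is the football itself, so the contribution to $Z^{\PT}(\mathcal{X})$ in classes $d[C]$ is a single orbifold edge factor between two $\ZZ/2$-colored vertices, computable as an explicit product in $(q,y,w)$. Substituting into Definition~\ref{defn: nptbgh}, taking the logarithm, and isolating the coefficient of $Q^{k[C]}\cdot \psi_{-(-y)^{k}}^{h-1}\cdot \psi_{w^{k}}^{g+1-2h}$ should yield $\nPT{g,h}{d[C]}=1$ exactly at $(g,h,d)=(0,0,1)$ and zero otherwise.

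\textbf{MT side.} For $d=1$, any $\inv$-stable pure $1$-dimensional sheaf $F$ with $[\supp F]=[C]$ is a line bundle $\OO_{C}(k)$ on $C\cong \PP^{1}$ together with an equivariant lift; matching $\chi(F)=R_{\reg}+\epsilon R_{-}$ forces $(k,\epsilon)\in\{(0,-1),(1,0),(2,1)\}$ with $1$, $2$, and $1$ compatible equivariant structures respectively. These are reduced isolated points in locally smooth moduli, so the vanishing-cycles sheaves are constant in perverse degree zero and
\[
\MT_{[C]}(y,w) \;=\; w^{-1}+2+w \;=\; \psi_{w}.
\]
For $d\geq 2$, writing $Z=\supp F$ for the scheme-theoretic support, the submodule $I_{C/Z}\cdot F\subsetneq F$ is $\inv$-equivariant and supported set-theoretically on $C$, so its support class is a nonzero multiple of $[C]$ strictly smaller than $d[C]$; if the $R_{\reg}$-coefficient of $\chi(I_{C/Z}\cdot F)$ is $\geq 2$ then this subsheaf destabilizes $F$ immediately, and if it equals $1$ the support mismatch with $[\supp F]=d[C]$ still violates Definition~\ref{defn: tau-stable MT sheaves}. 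Hence $\M^{\epsilon}_{d[C]}(X,\inv)=\emptyset$ for $d\geq 2$. Expanding $\psi_{w}$ in the basis $\psi_{y}^{h}\,\psi_{w}^{g+1-2h}$ prescribed by Definition~\ref{defn: tau-GV via MT} yields $\nMT{0,0}{[C]}=1$ and all other $\nMT{g,h}{d[C]}=0$.

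\textbf{Main obstacle.} The delicate step on the PT side is the logarithmic extraction: after applying $\log$ to the orbifold-vertex answer and collecting the multiple-cover $\tfrac{1}{k}$-factors, one must verify that all contributions from $d\geq 2$ and from $(g,h)\neq (0,0)$ cancel. On the MT side, the subtle step is the stability argument for $d\geq 2$, which requires ensuring $I_{C/Z}\cdot F\neq 0$ for every candidate $F$ and leveraging the local structure of $X$ near the two fixed points to handle sheaves not scheme-theoretically supported on $C$.
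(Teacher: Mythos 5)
Your PT argument and your $d=1$ MT argument track the paper's own proof: the paper also passes to the stack quotient, quotes the Bryan--Cadman--Young product $Z^{\PT}([X/\inv ])=\prod_{u\in \{v,vp_{0},vr_{0},vp_{0}r_{0}\}}M(u,-q)^{-1}$, translates variables through a $K$-theory table, and reads off $\log Z^{\PT}(X,\inv )=\sum_{k}\tfrac{1}{k}Q^{k}\,\psi^{-1}_{-(-y)^{k}}\psi_{w^{k}}$; and for $d=1$ it simply invokes Corollary~\ref{cor: ngh(C)=1 for local curve} with $(g,h)=(0,0)$, which is consistent with your explicit list of the four equivariant line bundles giving $\MT_{[C]}(y,w)=\psi_{w}$. (On the PT side you have only a plan, not a computation, but it is the right plan.)

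The genuine gap is in your MT argument for $d\geq 2$, and it sits exactly at the two points you defer to the ``main obstacle'' paragraph without resolving. First, if $F$ is scheme-theoretically supported on $C$ --- which happens for every equivariant rank-$d$ vector bundle on $C$, a perfectly typical candidate when $d\geq 2$ --- then $Z=C$, $I_{C/Z}=0$, and your proposed destabilizing subsheaf vanishes, so you learn nothing. Second, even when $Z\supsetneq C$, you only reach a contradiction when the $R_{\reg}$-coefficient $k$ of $\chi (I_{C/Z}\cdot F)$ is at least $1$; since $k=1-k'$ where $k'$ is the $R_{\reg}$-coefficient of $\chi (F\otimes \OO_{C})$, the case $k\leq 0$ can certainly occur, and then Definition~\ref{defn: tau-stable MT sheaves} imposes no condition on this subsheaf and the argument stalls. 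The paper closes both holes with a different mechanism: $\M^{\epsilon}_{d[C]}(X,\inv )$ is proper over $\Chow_{d[C]}(X)$, which is a point, so nonemptiness would produce a fixed point of the big torus; a torus-fixed sheaf is supported on a thickening $C_{\lambda}$ indexed by a partition, the invariant section $\OO_{C_{\lambda}}\to \calE$ together with $\chi (\OO_{C}(iz_{0}+jz_{\infty}))=(1+a+b)R_{\reg}+(i'+j'-1)R_{-}$ forces $\lambda =\square$, and a torus-fixed equivariant bundle on $C$ splits equivariantly into line summands, one of which has $R_{\reg}$-coefficient $\geq 1$ and support class $[C]\neq d[C]$, contradicting stability. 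If you want to avoid localization you would need, at minimum, to (a) treat equivariant rank-$d$ bundles on $C$ directly, e.g.\ by exhibiting an equivariant sub-line-bundle of maximal degree and checking it destabilizes, and (b) replace $I_{C/Z}\cdot F$ by a step of the $I_{C}$-adic filtration whose $\chi$ provably has $R_{\reg}$-coefficient $\geq 1$.
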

We remark that despite the simplicity of the answer, the above
proposition is the result of rather involved orbifold topological
vertex computation which was done in \cite{Bryan-Cadman-Young} and it
gives a non-trivial instance of our conjecture.

\section{Motivating example of an isolated smooth invariant curve.}

The simplest evidence of our conjecture is the case of a rigid local
curve in the primitive class of the curve.  Let $C$ be a non-singular
curve of genus $g$.  Suppose there exists an involution $\inv : C \to
C$ with fixed points
\[
C^{\inv} = \{p_{1}, \ldots, p_{2m}\}.
\] 
If $h$ is the genus of the quotient $C/\inv$, then by Riemann-Hurwitz we have
\[
m = g+1-2h.
\]

Let $N$ be an $\inv$-equivariant line bundle on $C$ such that
$H^{0}(N)=H^{1}(N)=0$. Then 
\[
X=\Tot (N\oplus K_{C}N^{-1})
\]
is a CY3 with an induced involution (also denoted $\inv$) acting
trivially on $K_{X}$. Let $[C]$ be the class of the zero section
$C\hookrightarrow X$ which we note is rigid.

The moduli spaces for the class $[C]$ can be determined explicitly:

\begin{proposition}\label{prop: iPT and iMT moduli spaces for degree 1
local curve}
The $\inv$-PT and $\inv$-MT moduli spaces in the class $[C]$ are given by
\begin{align*}
\PT_{[C],n,\epsilon}(X, \inv)& = \coprod_{\substack{ T \subseteq \{1,\dotsc ,2m \} \\ |T| = \epsilon + m}} \Sym^{n+h-1} \big(C/\inv \big), \\
\M_{[C]}^{\epsilon}(X, \inv)& = \coprod_{\substack{ T \subseteq \{1,\dotsc ,2m \} \\ |T| = \epsilon + m}} \Pic^{h} \big(C/\inv \big) .
\end{align*}
\end{proposition}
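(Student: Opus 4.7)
The plan is to reduce both moduli problems to divisor and line bundle data on $C$ itself, and then extract the discrete equivariant invariants $(n,\epsilon)$ by a single application of the Atiyah-Bott holomorphic Lefschetz formula. First I would exploit the rigidity of $C$: the hypothesis $H^{0}(N)=H^{1}(N)=0$ (combined with Serre duality on $C$ for $K_{C}N^{-1}$) gives $H^{i}(N_{C/X})=0$ for $i=0,1$, so $C$ is isolated. Moreover $X$ deformation-retracts onto $C$, so $H_{2}(X)=\ZZ\cdot[C]$; fibers of $X\to C$ are affine, so every proper curve in $X$ covers $C$, and the class $[C]$ forces degree one, showing that $C$ is the unique reduced irreducible curve in its class. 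Purity of $F$ together with a fundamental-cycle computation then forces $F=\iota_{*}L$ for some line bundle $L$ on $C$, where $\iota:C\hookrightarrow X$ is the zero section. A PT pair $(F,s)$ thus corresponds to a morphism $\OO_{C}\to L$ with zero-dimensional cokernel, hence to an effective divisor $D\geq 0$ on $C$ with $L=\OO_{C}(D)$, and $\inv$-equivariance of $(F,s)$ is equivalent to $\inv$-invariance of $D$, the equivariant structure on $L$ being pinned down by equivariance of $s$.

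Next I would parameterize invariant effective divisors and compute their equivariant Euler characteristics. Every such $D$ admits a unique decomposition
\[
D = q^{*}E + \sum_{i\in T}p_{i},\qquad E\in\Sym^{e}(C/\inv),\ T\subseteq\{1,\dotsc,2m\},
\]
where $q:C\to C/\inv$ is the quotient and $T$ collects the fixed points at which $D$ has odd multiplicity. Applying Atiyah-Bott to $L=\OO_{C}(D)$, using $d\inv=-1$ on each $T_{p_{i}}C$ (so $\det(1-d\inv\mid T^{*}_{p_{i}}C)=2$) and the trace of $\inv$ on $L_{p_{i}}$ equal to $+1$ if $i\notin T$ and $-1$ if $i\in T$, gives
\[
\chi(L)^{+}-\chi(L)^{-} \;=\; \tfrac{1}{2}\sum_{i=1}^{2m}(\pm 1) \;=\; m-|T|.
\]
Combining with $\chi(L)=2e+|T|+1-g$ and Riemann-Hurwitz $g=2h-1+m$, writing $\chi(L)=nR_{\reg}+\epsilon R_{-}$ then yields $n=e+1-h$ and $\epsilon=|T|-m$. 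Assembling over pairs $(E,T)$ with $|T|=\epsilon+m$ and $E\in\Sym^{n+h-1}(C/\inv)$ delivers the PT formula.

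For the MT moduli, the same reduction produces $F=\iota_{*}L$ for an $\inv$-equivariant line bundle $L$ on $C$ with $\chi(L)=R_{\reg}+\epsilon R_{-}$. For each $T$ with $|T|=\epsilon+m$, I would identify such an $L$ uniquely with a line bundle $L_{0}\in\Pic^{h}(C/\inv)$ via the isomorphism $L\cong q^{*}L_{0}\otimes\OO_{C}(P_{T})$, where $P_{T}=\sum_{i\in T}p_{i}$ carries its natural $\inv$-equivariant lift; the degree $h$ is forced by the Lefschetz identity $\deg L_{0}=(\deg L-|T|)/2=h$. $\inv$-stability is then automatic: every equivariant proper subsheaf is of the form $L(-D')$ for some invariant effective $D'\neq 0$, and a short Lefschetz calculation for $\chi(L(-D'))$ shows that the condition ``$k\leq 1$, with $\gamma<\epsilon$ when $k=1$'' always holds, while the condition on supports is immediate since $L(-D')$ remains a line bundle on $C$.

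The main obstacle is the equivariant bookkeeping in the Lefschetz step: one has to keep careful track of the two $\inv$-lifts on each invariant line bundle (hence the two equivariant structures producing a given character vector at the fixed points) and correctly match the discrete invariant $\epsilon$ with the cardinality $|T|$ up to the Riemann-Hurwitz shift by $m$. Once this is in place, the identification of each stratum with $\Sym^{n+h-1}(C/\inv)$ or $\Pic^{h}(C/\inv)$ is a direct application of Abel-Jacobi on $C/\inv$ and of equivariant descent of line bundles along $q$.
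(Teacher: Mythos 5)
Your proposal is correct and follows the same overall skeleton as the paper's argument: reduce to $\inv$-equivariant line bundles on the rigid curve $C$, decompose an invariant divisor into free orbits (pulled back from $C/\inv$) plus a subset $T$ of the fixed points, extract $(n,\epsilon)$ from the equivariant Euler characteristic, and identify each stratum with $\Sym^{n+h-1}(C/\inv)$ or $\Pic^{h}(C/\inv)$. The two places where you diverge are in the tools: the paper computes $\chi(L)=nR_{\reg}+\epsilon R_{-}$ via the exact sequence $0\to\OO_{C}\to L\to\OO_{D}(D)\to 0$ together with the local identity $\chi(\OO_{p_i}(p_i))=R_{-}$ and $\chi(\OO_{C})=(1-h)R_{\reg}-mR_{-}$ (its Lemma~3.4), whereas you get the same character data in one stroke from the Atiyah--Bott holomorphic Lefschetz formula; and for the MT space the paper invokes the orbifold Picard group $\Pic([C/\inv])\cong\Pic(C/\inv)\oplus(\ZZ/2)^{2m}$, whereas you use equivariant descent of $L\otimes\OO_{C}(-P_{T})$ along $q:C\to C/\inv$ directly. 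These are equivalent in content; your Lefschetz route has the small advantage of also pinning down $|T|=\epsilon+m$ for an arbitrary equivariant line bundle without first producing an invariant section, and your explicit check that $\inv$-stability is automatic for subsheaves $L(-D')$ fills in a point the paper leaves implicit. Your numerics all check out: $\epsilon=|T|-m$, $n=e+1-h$, and $\deg L_{0}=(\deg L-|T|)/2=h$ agree with the paper's formulas.
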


As a consequence, we find there is a single non-zero $\inv$-GV
invariant in the class $[C]$:

\begin{corollary}\label{cor: ngh(C)=1 for local curve}
\[
\nPT{g',h'}{[C]} = \nMT{g',h'}{[C]} = \begin{cases}
1\text{ if $(g',h')=(g,h),$}\\
0\text{ if $(g',h')\neq (g,h).$}
\end{cases}
\]
\end{corollary}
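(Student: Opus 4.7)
The plan is to apply Definitions~\ref{defn: nptbgh} and~\ref{defn: tau-GV via MT} directly to the moduli spaces identified in Proposition~\ref{prop: iPT and iMT moduli spaces for degree 1 local curve}. Since $C$ is rigid, $\Chow_{[C]}(X)$ is a single reduced point, so $R\pi^{\epsilon}_{*}\phi^{\bullet}$ collapses to ordinary cohomology. Moreover $X$ deformation retracts onto $C$, so $H_{2}(X,\ZZ)=\ZZ\cdot [C]$ is generated by $[C]$; thus only the $k=1$ term of Definition~\ref{defn: nptbgh} can contribute at $Q^{[C]}$ in the logarithm. Every component appearing in Proposition~\ref{prop: iPT and iMT moduli spaces for degree 1 local curve} is globally smooth with trivial super-potential, so the Behrend function is the constant $(-1)^{\dim }$ and, up to orientation, the perverse sheaf of vanishing cycles is the shifted constant sheaf. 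A single binomial identity handles the sum over the indexing sets $T$ in both calculations:
\[
\sum_{\epsilon }\binom{2m}{\epsilon +m}\,w^{\epsilon }=w^{-m}(1+w)^{2m}=\psi_{w}^{m},
\]
where $m=g+1-2h$ by Riemann-Hurwitz.

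For the MT calculation, each component $\Pic^{h}(C/\inv )$ is a complex abelian variety of dimension $h$ with Poincar\'e polynomial $(1+y)^{2h}$. The identification $\phi^{\bullet }\simeq \QQ [h]$ gives $\chi ({}^{p}\!H^{i}(R\pi^{\epsilon }_{*}\phi^{\bullet }))=\binom{2m}{\epsilon +m}\binom{2h}{i+h}$, and assembling with the binomial identity above yields
\[
\MT_{[C]}(y,w)=\psi_{w}^{m}\cdot y^{-h}(1+y)^{2h}=\psi_{y}^{h}\,\psi_{w}^{g+1-2h}.
\]
Since the monomials $\psi_{y}^{a}\psi_{w}^{b}$ are linearly independent, matching against Definition~\ref{defn: tau-GV via MT} forces the stated values of $\nMT{g',h'}{[C]}$.

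For the PT calculation, each component $\Sym^{n+h-1}(C/\inv )$ has dimension $n+h-1$ and virtual Euler characteristic $(-1)^{n+h-1}e(\Sym^{n+h-1}(C/\inv ))$. The standard generating function $\sum_{k\geq 0}e(\Sym^{k}(C/\inv ))q^{k}=(1-q)^{2h-2}$ for symmetric products of the smooth genus-$h$ curve $C/\inv $, combined with the binomial identity, gives
\[
[Z^{\PT}(X,\inv )]_{Q^{[C]}}=\psi_{w}^{m}\cdot y^{1-h}(1+y)^{2h-2}=\psi_{y}^{h-1}\,\psi_{w}^{g+1-2h}.
\]
Because only $k=1$ contributes to the logarithm at $Q^{[C]}$, this matches $\sum_{g',h'}\nPT{g',h'}{[C]}\,\psi_{y}^{h'-1}\psi_{w}^{g'+1-2h'}$, and the stated values follow from linear independence.

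The main point to verify carefully is that the canonical sign of the Behrend function on a smooth moduli space and the canonical orientation data on a smooth critical locus with trivial super-potential agree with the sign conventions baked into Definitions~\ref{defn: nptbgh} and~\ref{defn: tau-GV via MT}. These conventions are standard and in our setting reduce to signed Euler characteristics and dimensions of cohomology groups of tori, so they constitute the only mildly delicate bookkeeping issue and not a genuine obstacle.
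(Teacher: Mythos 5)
Your proposal is correct and follows essentially the same route as the paper: identify the moduli spaces via Proposition~\ref{prop: iPT and iMT moduli spaces for degree 1 local curve}, use smoothness to reduce the Behrend function to $(-1)^{\dim}$ and the vanishing-cycle sheaf to a shifted constant sheaf, sum over the subsets $T$ with the binomial identity to produce $\psi_{w}^{m}$, and apply MacDonald's formula (for PT) and the Poincar\'e polynomial of the abelian variety $\Pic^{h}(C/\inv)$ (for MT). The only cosmetic difference is that you justify the passage from $Z^{\PT}$ to $\log Z^{\PT}$ via $H_{2}(X)\cong\ZZ[C]$ while the paper simply invokes primitivity of $[C]$; the computations are otherwise identical.
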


\begin{remark}\label{rem: multiple covers of local curve and orbifold P=W}
We expect the invariants $n_{g',h'}(d[C])$ to be complicated for $d>1$
and $g>0$. For the related case of $X=\Tot (K_{C}\oplus \OO )$,
Conjecture~\ref{conj: nPTbg = nDTbg} is equivalent to the well known
$P=W$ conjecture for the $\operatorname{GL}_{d}$ Hitchin system on $C$
\cite[\S~9.3]{Maulik-Toda}. We expect that for $\inv : X\to X$, the
natural lift of an involution on $C$, our Conjecture~\ref{conj: two
tau-GV defns are the same nPT=nMT} should be equivalent to an orbifold
version of $P=W$ for the orbifold curve $[C/\inv ]$.
\end{remark}

To prove Proposition \ref{prop: iPT and iMT moduli spaces for degree 1
local curve}  and Corollary~\ref{cor: ngh(C)=1 for local curve}, we
need the following lemma.

\begin{lemma}\label{lem: chi(L) for L=O(D) invariant under i}
Let $L=\OO (D)$ be an $\inv$-equivariant line bundle on $C$ admitting
an $\inv$-invariant section $\OO_{C}\to L$ vanishing on an
$\inv$-invariant effective divisor $D$. Then $D$ can be written
\[
D= \sum_{j} d_{j}(x_{j}+\inv (x_{j})) +\sum_{i\in T} p_{i}
\]
where $T\subset \{1\dotsb 2m \}$, $x_{j}\in C$,  and 
\[
\chi (L) = nR_{\reg}+\epsilon R_{-}
\]
with
\[
n=1-h+\sum_{j}d_{j} , \quad \epsilon =|T|-m.
\]
Moreover, the above formula still holds for $L=\OO (D)$ with $D$
$\inv$-invariant, but not necessarily effective. 
\end{lemma}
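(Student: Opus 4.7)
The plan is to determine $\chi(L)$ as a virtual $\ZZ/2$-representation by computing its total dimension and the trace of $\inv$ on it, then extract $n$ and $\epsilon$ from the two-equation system. First I would put each $\inv$-invariant effective divisor into the claimed normal form. Since $\inv$ has precisely the $2m$ fixed points $p_1,\dotsc,p_{2m}$, every $\inv$-invariant effective divisor decomposes uniquely as a non-negative integer combination of $\inv$-orbits: non-fixed orbits contribute summands $x+\inv(x)$, while a fixed point $p_i$ appears with arbitrary non-negative multiplicity $e_i = 2c_i+r_i$ with $r_i\in\{0,1\}$. Absorbing $2c_ip_i = c_i(p_i+\inv(p_i))$ into the orbit sum (allowing $x_j$ to be a fixed point) and collecting the residues into $T=\{i: r_i=1\}$ produces the stated normal form.

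Next, Riemann-Roch and Riemann-Hurwitz determine the total dimension: $\dim\chi(L) = \deg L + 1-g = 2\sum_j d_j + |T| + 1 - g$, and the degree-$2$ cover $C\to C/\inv$ ramified at the $2m$ fixed points forces $g=2h+m-1$, giving
\[
\dim\chi(L) = 2\sum_j d_j + |T| + 2 - 2h - m.
\]
The trace of $\inv$ is read off from the holomorphic Lefschetz fixed point theorem:
\[
\Tr\bigl(\inv \mid \chi(L)\bigr) = \sum_{i=1}^{2m} \frac{\Tr(\inv\mid L_{p_i})}{\det\bigl(1 - d\inv \mid T_{p_i}C\bigr)}.
\]
Since $d\inv = -1$ on each one-dimensional tangent space, each denominator equals $2$. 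A direct local computation---writing $s$ in a local frame as a unit times $t^{e_i}$ with $\inv^*t = -t$ and imposing $\inv$-invariance of $s$---shows that the equivariant structure on $L$ selected by $s$ acts at $p_i$ by $(-1)^{e_i}$. As $e_i$ is odd exactly when $i\in T$, the Lefschetz sum collapses to $\tfrac{1}{2}\bigl((2m-|T|) - |T|\bigr) = m - |T|$. Writing $\chi(L) = nR_{\reg}+\epsilon R_- = nR_+ + (n+\epsilon)R_-$, the dimension is $2n+\epsilon$ and the $\inv$-trace is $-\epsilon$, and matching against the two expressions above yields $\epsilon = |T|-m$ and $n = 1 - h + \sum_j d_j$.

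For the non-effective case, any $\inv$-invariant $D$ still admits the stated form with $d_j\in\ZZ$; writing $D = D_+ - D_-$ with $D_\pm$ effective and invariant, the short exact sequence $0\to \OO(D_+ - D_-) \to \OO(D_+) \to \OO(D_+)|_{D_-}\to 0$ reduces the Euler-characteristic computation to the effective case, provided one tracks how the skyscraper quotient splits into summands carrying $R_{\reg}$ (from free $\inv$-orbits) and $R_\pm$ (from fixed points in $D_-$, determined by their parity). The main obstacle is the local analysis in Step~3: identifying the $\inv$-action on $L_{p_i}$ as $(-1)^{e_i}$ depends essentially on the specific equivariant structure selected by the invariance of $s$, and it is precisely this sign that produces the $R_-$ contribution to $\chi(L)$.
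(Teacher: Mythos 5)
Your proof is correct, but it takes a genuinely different route from the paper's. The paper reduces everything to the short exact sequence $0\to \OO_{C}\to L\to \OO_{D}(D)\to 0$: additivity of $\chi$ then expresses $\chi(L)$ as $\chi(\OO_{C})$ plus $(\sum_{j}d_{j})R_{\reg}$ plus the local contributions $\chi(\OO_{p_{i}}(p_{i}))=R_{-}$ (computed in a local coordinate), and the term $\chi(\OO_{C})=(1-h)R_{\reg}-mR_{-}$ is read off from the genera of $C$ and $C/\inv$. You instead determine the virtual representation $\chi(L)$ by computing its two characters separately: the dimension $2n+\epsilon$ via Riemann--Roch together with Riemann--Hurwitz, and the trace $-\epsilon$ via the holomorphic Lefschetz fixed point formula, with the local weight $(-1)^{e_{i}}$ on $L_{p_{i}}$ coming from the action on the local generator $t^{-e_{i}}$ with $\inv^{*}t=-t$. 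Both are sound; your key local input (the sign $(-1)^{e_{i}}$ at a fixed point, hence $R_{-}$ exactly when $i\in T$) is the same computation the paper performs when it evaluates $\chi(\OO_{p_{i}}(p_{i}))=R_{-}$, just packaged into the Lefschetz denominator $\det(1-d\inv)=2$ rather than into a skyscraper sheaf. The paper's route is more elementary and extends verbatim to the non-effective case via $D=D'-D''$; your route invokes heavier machinery but gets the answer in one shot and makes the geometric meaning of $\epsilon$ (half the signed count of fixed points according to membership in $T$) transparent. Your treatment of the non-effective case falls back on the same exact-sequence reduction the paper uses, which is fine, though one could note that the Lefschetz/Riemann--Roch argument applies directly to $\OO(D)$ for any invariant $D$ with the natural equivariant structure, since the local weight $(-1)^{e_{i}}$ makes sense for $e_{i}\in\ZZ$.
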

\begin{proof}
Since the support of $D$ is $\inv$-invariant, it must consist of free
orbits and fixed points. Absorbing multiplicities which are not zero
or one on the $p_{i}$'s into the first term using $2p_{i}=p_{i}+\inv
(p_{i})$ we see $D$ may be written in the given form. Assuming $D$ is
effective, we then get
sequence
\[
0\to \OO_{C}\to L\to \OO_{D}(D) \to 0
\]
and consequently we find
\[
\chi (L) = \chi (\OO_{C}) +(\sum_{j}d_{j})R_{\reg} +\sum_{i\in T} \chi (\OO_{p_{i}}(p_{i})).
\]
Using a local coordinate about $p_{i}$, it is easy to determine that
$\chi (\OO_{p_{i}}(p_{i}))=R_{-}$. We also observe that 
\[
\chi (\OO_{C}) = (1-h)R_{+} + (h-g)R_{-} = (1-h)R_{\reg} - mR_{-}
\]
and the formula for $\chi (L)$ follows. The general case is then
obtained by writing $D=D' - D''$ with $D'$ and $D''$ effective and
using the sequence
\[
0\to \OO(D'-D'')\to \OO(D')\to \OO_{D''}(D')\to 0.
\]
\end{proof}

\begin{proof}[Proof of Proposition~\ref{prop: iPT and iMT moduli
spaces for degree 1 local curve}] An $\inv$-PT pair on $X$ in the
class $[C]$ must be supported on $C$ and hence be an
$\inv$-equivariant line bundle with a non-zero invariant section
$\OO_{C}\to L$. Such $\inv$-PT pairs are determined up to isomorphism
by the associated invariant divisor $D$ which by Lemma~\ref{lem:
chi(L) for L=O(D) invariant under i} is determined by the subset
$T\subset \{1,\dotsc ,2m \}$ and $\sum_{j} d_{j}=n+h-1$ $\inv$-orbits
on $C$ or equivalently, $n+h-1$ points on $C/\inv$. The first equation
in the proposition follows.

For the same reasons, an $\inv$-MT sheaf on $X$ in the class $[C]$
must be an $\inv$-equivariant line bundle $L\to C$, or equivalently, a line
bundle on the stack quotient $[C/\inv ]$. Since $\chi (L) =
R_{\reg}+\epsilon R_{-}$, $L$ admits a non-zero $\inv$-invariant
section and hence is of the form $L\cong \OO_{C}(D)$ where $D$ is an
invariant divisor given as in Lemma~\ref{lem: chi(L) for L=O(D)
invariant under i} with $\sum_{j}d_{j}=h$.

The Picard group of an orbifold is given in general in 
\cite[\S~B]{Popa2017}. In particular for $[C/\inv ]$ we have
\begin{align*}
\Pic ([C/\inv ])&\cong \Pic^{0}(C/\inv )\oplus H^{2}_{\mathsf{orb}}([C/\inv ])\\
&\cong \Pic^{0}(C/\inv )\oplus \ZZ \oplus (\ZZ /2)^{2m}\\
&\cong \Pic (C/\inv ) \oplus (\ZZ /2)^{2m}.
\end{align*}
Under the above isomorphism, the line bundle $\OO_{C}(D)$ with $D$ as
in the lemma goes to
$(\OO_{C/\inv}(\sum_{j}d_{j}\overline{x}_{j}),1_{T})$ where
$\overline{x}_{j}\in C/\inv $ is the point corresponding to the orbit
$x_{j}+\inv (x_{j})$ and $1_{T}=(t_{1},\dotsc ,t_{2m})$ where
$t_{i}=1$ if $i\in T$ and $0$ otherwise. The second equation of
Proposition~\ref{prop: iPT and iMT moduli spaces for degree 1 local
curve} follows.
\end{proof}

\begin{proof}[Proof of Corollary~\ref{cor: ngh(C)=1 for local curve}]
Since $[C]$ is a primitive class, we have 
\begin{align*}
\left[ Z^{\PT}(X,\inv ) \right]_{q^{[C]}}& = \left[\log Z^{\PT}(X,\inv )
\right]_{q^{[C]}}\\
& = \sum_{g',h'} \nPT{g',h'}{[C]}
\psi^{h'-1}_{y}\psi^{g'+1-2h'}_{w}. 
\end{align*}
On the other hand, by Proposition~\ref{prop: iPT and iMT moduli spaces for degree 1
local curve} and using the fact that the Behrend function is
$(-1)^{d}$ on a smooth scheme of dimension $d$ we get 
\begin{align*}
\left[Z^{\PT}(X,\inv ) \right]_{q^{[C]}} & = \sum_{T\subset \{1,\dotsc
,2m \}}\sum_{n} (-1)^{n+h-1} e\left(\Sym^{n+h-1}(C/\inv) \right)\,
y^{n} w^{|T|-m}\\
&=\left(\sum_{k=0}^{2m}\binom{2m}{k}w^{k-m} \right)
y^{1-h}\sum_{d=0}^{\infty}e\left(\Sym^{d}(C/\inv ) \right) (-y)^{d} \\
&=\psi_{w}^{m}\,y^{1-h}(1+y)^{2h-2}\\
&=\psi_{y}^{h-1}\,\psi_{w}^{g+1-2h}
\end{align*}
where we used MacDonald's formula \cite{Macdonald-poincare-poly} for
the penultimate equality. The formula for $\nPT{g',h'}{[C]}$ then
follows.

To compute $\nMT{g',h'}{[C]}$ we observe that $\Chow_{[C]}(X)$ is a
point and that $\M^{\epsilon}_{[C]}(X,\inv )$ is smooth. Consequently,
the Maulik-Toda polynomial is given by the (symmetrized) Poincare
polynomial:
\[
\MT_{[C]}(y,w) = \sum_{T\subset \{1,\dotsc 2m \}} w^{|T|-m} \,
\widetilde{P}_{y}\left(\Pic^{h}(C/\inv ) \right). 
\]
Since $\Pic^{h}(C/\inv )$ is an Abelian variety of dimension $h$, its
symmetrized Poincare polynomial is given by
$y^{-h}(1+y)^{2h}=\psi_{y}^{h}$. Thus
$\MT_{[C]}(y,w)=\psi_{y}^{h}\,\psi_{w}^{g+1-2h}$ and the formula for
$\nMT{g',h'}{[C]}$ is proved.
\end{proof}

\section{Local Abelian and Nikulin Surfaces (PT theory)}
\label{sec: Local Abelian/Nikulin Surfaces (PT theory)}

\subsection{Overview}\label{subsection:Overview} In this section we
compute $Z^{\PT}(X,\inv )$, and thus determine all the invariants
$\nPT{g,h}{\beta}$ for the case of $X=S\times \CC$, where $S$ is an Abelian surface with its
symplectic involution $\inv (a)=-a$ or a Nikulin $K3$ surface which by
definition comes with a symplectic involution (in both cases, $\inv$ acts trivially on the second factor). The main theorem is
given by Theorem~\ref{thm: main statement about nPT for
Abelian/Nikulin surfaces}.

Our basic tool for computing the $\inv$-PT invariants of $S\times \CC$
is the Donaldson-Thomas Crepant Resolution Conjecture (DT-CRC) which
was conjectured in \cite{Bryan-Cadman-Young} and recently proven by
Beentjes, Calabrese, and Rennemo in
\cite{Beentjes-Calabrese-Rennemo}. The idea is the following. Our
$\inv$-PT partition function $Z^{\PT} (X,\inv )$ can be written in
terms of the orbifold PT partition function $Z^{\PT}([X/\inv ])$ and
then the DT-CRC asserts that
\begin{equation}\label{eqn: ZPT(X)=ZPT(Y)/ZPTexc}
Z^{\PT}([X/\inv ]) = \frac{Z^{\PT}(Y)}{Z^{\PT}_{\exc}(Y)}
\end{equation}
where $Y\to X/\inv$ is the crepant resolution, $Z^{\PT}(Y)$ is the
ordinary PT partition function of $Y$, and $Z^{\PT}_{\exc} (Y)$ is the
partition function for curve classes supported on the exceptional
fibers. The variables in the above equality are identified via the
Fourier-Mukai isomorphism in numerical $K$-theory. In the case of
$X=S\times \CC$, 
\[
Y=\Sresolve \times \CC
\]
where $\Sresolve \to S/\inv$ is the minimal resolution. In the case
where $S$ is an Abelian surface, $\Sresolve$ is the associated Kummer
$K3$ surface, and in the case where $S$ is a Nikulin $K3$ surface,
$\Sresolve$ is a special kind of $K3$ surface which we call a Nikulin
resolution. 

We then can compute the right hand side of Equation~\eqref{eqn:
ZPT(X)=ZPT(Y)/ZPTexc} using the KKV formula (see Section~\ref{subsec:
KKV formula}). Doing this requires an explicit description of the
Picard lattice of $\Sresolve $, which was given by Garbagnati-Sarti
\cite{Garbagnati-Sarti-Kummer-surfaces-2016,Garbagnati-Sarti-even-set-2008}.
Finally, to complete the computation, we will need some theta function
identities which we prove in Section~\ref{subsec: Theta function identities}.

\subsection{Using the DT-CRC}\label{subsec: using the DT-CRC}

To use Equation~\eqref{eqn: ZPT(X)=ZPT(Y)/ZPTexc}, we will need to be
explicit with our choice of variables. Let $X=S\times \CC$ and let
$\beta_{d}$ be an effective, $\inv$-invariant, primitive curve class
on $S$ with $\beta_{d}^{2}=2d$ which we identify with the
corresponding class on $X$. To determine our invariants
$\nPT{g,h}{m\beta_{d}}$, we need to compute the partition function
\[
Z^{\PT} (X,\inv ) = \sum_{m\geq 0}\sum_{n, \epsilon}
N^{\PT}_{m\beta_{d},n,\epsilon}(X,\inv ) Q^{m}y^{n}w^{\epsilon}.
\]
\begin{remark}\label{rem: A Picard rank bigger than 1}
Strictly speaking, when the rank of the invariant Picard group $\Pic
(S)^{\inv }$ is greater than 1, the above is a \emph{restricted}
partition function: we don't sum over all invariant curve classes,
but only over the semi-group generated by $\beta_{d}$. This suffices
for determining the invariants $\nPT{g,h}{m\beta_{d}}$. A few of the
statements made in this section require minor adjustments in the case
where the invariant Picard rank is greater than 1. Note that for
$d\leq 0$, the invariant Picard rank is necessarily greater than 1.
\end{remark}

Our partition function $Z^{\PT}(X,\inv)$ can be determined from the PT
partition function of the orbifold $[X/\inv ]$. 
By definition (see \cite{Beentjes-Calabrese-Rennemo}), 
\[
Z^{\PT} ([X/\inv ]) = \sum_{\alpha \in N_{\leq 1}([X/\inv ])}
N_{\alpha}^{\PT}([X/\inv ]) \mathbf{Q}^{\alpha} 
\]
where the sum ranges over the numerical $K$-theory of
$\operatorname{Coh}_{\leq 1}([X/\inv ])$, the category of coherent
sheaves on $[X/\inv ]$ having proper support of dimension less than or
equal to one.

We need to choose generators for the free $\ZZ$-modules 
\[
N_{\leq 1}([X/\inv ])\cong N_{\leq 1}([S/\inv ])
\]
and
\[
N_{\leq 1}(Y)\cong N_{\leq 1}(\Sresolve )\cong H_{0}(\Sresolve )\oplus \Pic (\Sresolve )
\]
in a way that is compatible with the Fourier-Mukai isomorphism
\[
N_{\leq 1}([S/\inv ])\cong N_{\leq 1}(\Sresolve ).
\]

It will be convenient to choose generators over $\QQ$. Determining
which linear combinations of our generators are integral classes is
somewhat subtle and is addressed in Section~\ref{subsec: integral
classes on KmA}. The generators, and their corresponding variables in
the partition functions, are given in the following table:

\setlength{\arrayrulewidth}{0.5mm}
\setlength{\tabcolsep}{15pt}
\renewcommand{\arraystretch}{1.5}
\bigskip
\begin{table}[H]
\begin{center}
\begin{tabular}{lll}\toprule
Class in $N_{\leq 1}([S/\inv ])$ &
Class in $H_{0}(\Sresolve)\oplus \Pic (\Sresolve)$ & Variable \\
\midrule
$ [\OO_{\pt}]$ & $[\pt]$ & $y$\\
$[\OO_{x_{i}}\otimes R_{-}]$ & $[E_{i}]$ & $w_{i}$\\
$\alpha_{d}$  & $\gamma_{d}$ & $Q$\\ \bottomrule
\end{tabular}
   \end{center}
\end{table}
\bigskip

Here $\OO_{\pt}$ is the structure sheaf of a generic
point on $[S/\inv ]$,
$\OO_{x_{i}}\otimes R_{-}$ is the structure sheaf of the $i$-th
orbifold point $x_{i}$ equipped with the non-trivial action of its
stabilizer group, and
\[
\alpha_{d}  = \half t_{*}( \operatorname{ch}^{-1}(\beta_{d}))
\]
where $t:S\to [S/\inv] $ and $\operatorname{ch}^{-1}(\beta_{d})$ is
the class in $N_{\leq 1}(S)$ corresponding to $\beta_{d}\in
\Pic (S)$ under the Chern character isomorphism. The
generators of $H_{0}(\Sresolve)$ and $ \Pic (\Sresolve)$ are given by
the point class $[\pt ]$, the classes of the exceptional divisors
$E_{i}$, and
\[
\gamma_{d} = c_{1}(FM(\alpha_{d})),
\]
the divisor class associated to the image of $\alpha_{d}$ under the
Fourier-Mukai isomorphism.

The fact that the above choices are compatible with the Fourier-Mukai
isomorphism uses the well-known fact that the isomorphism takes
$\OO_{x_{i}}\otimes R_{-}$ to $\OO_{E_{i}}(-1)$.

With these variables, Equation~\eqref{eqn: ZPT(X)=ZPT(Y)/ZPTexc} can
be viewed as an equality of formal series\footnote{In general, the
statement of the DT-CRC requires viewing the partition functions as
rational functions in certain variables and the equality as an
equality of rational functions. This issue does not arise in this
case. } in the variables $y,w_{i},Q$.

\begin{lemma}\label{lem: ZPT(X,i)=ZPT([X/i])|wi=w}
$Z^{\PT}(X,\inv )=\restriction{Z^{\PT}([X/\inv ])}{w_{i}=w}$
\end{lemma}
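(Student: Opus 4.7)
The plan is to identify the $\inv$-equivariant PT moduli space on $X$ with a disjoint union of orbifold PT moduli components on $[X/\inv]$, and then to compare the monomial weights. The central observation is that $\inv$-equivariant coherent sheaves on $X$ are the same as coherent sheaves on the quotient stack $[X/\inv]$, and an $\inv$-invariant section of an equivariant sheaf is the same datum as a section on the stack. Hence $\PT_{\beta,n,\epsilon}(X,\inv)$ is precisely the disjoint union of those components of the orbifold PT moduli on $[X/\inv]$ whose numerical K-class $\alpha \in N_{\leq 1}([X/\inv])$ has support class $\beta$ on $X$ and $\inv$-equivariant Euler characteristic $nR_{\reg}+\epsilon R_-$.

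Next I would decompose $\alpha$ in the chosen generators, writing
\[
\alpha = a\,[\OO_{\pt}] + \sum_i b_i\,[\OO_{x_i}\otimes R_-] + c\,\alpha_d,
\]
and match the data $(\beta,n,\epsilon)$. The support class is carried by $c\alpha_d$ alone, since $[\OO_{\pt}]$ and $[\OO_{x_i}\otimes R_-]$ are zero-dimensional; from $\alpha_d=\tfrac12 t_*(\operatorname{ch}^{-1}(\beta_d))$ one reads off that $\alpha_d$ has support class $\beta_d$, forcing $c=m$ when $\beta=m\beta_d$. For the equivariant Euler characteristic, I would use $\chi([\OO_{\pt}])=R_{\reg}$ (the pullback to $X$ of a generic point skyscraper is $\OO_p\oplus\OO_{\inv p}$ with the swap equivariant structure, whose global sections split as $R_+ \oplus R_-$), $\chi([\OO_{x_i}\otimes R_-])=R_-$ by construction, and $\chi(\alpha_d)=0$ (since the pushforward of a rational K-class on $S$ of Euler characteristic zero corresponds to the equivariant sheaf $F_0\oplus \inv^*F_0$ on $S$ whose equivariant $\chi$ is $\chi(F_0)\cdot R_{\reg}=0$). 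Matching $\chi(\alpha)=nR_{\reg}+\epsilon R_-$ forces $a=n$ and $\sum_i b_i=\epsilon$, while the individual $b_i$'s remain free parameters.

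I would conclude by assembling the resulting scheme-theoretic decomposition
\[
\PT_{m\beta_d,n,\epsilon}(X,\inv) = \bigsqcup_{(b_i):\,\sum_i b_i=\epsilon} \PT_\alpha([X/\inv])
\]
with $\alpha=n[\OO_{\pt}]+\sum_i b_i[\OO_{x_i}\otimes R_-]+m\alpha_d$. Additivity of the Behrend-weighted Euler characteristic on disjoint unions gives $N^{\PT}_{m\beta_d,n,\epsilon}(X,\inv)=\sum_{\sum b_i=\epsilon}N^{\PT}_\alpha([X/\inv])$. At the level of generating series, the monomial $Q^m y^n w^\epsilon$ on the $\inv$-PT side corresponds to $Q^m y^a\prod_i w_i^{b_i}$ on the orbifold side, and the substitution $w_i=w$ converts $\prod_i w_i^{b_i}$ into $w^{\sum_i b_i}$, which automatically performs the collection over tuples $(b_i)$ with prescribed total $\epsilon$. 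The main obstacle will be tracking which integer tuples $(a,\{b_i\},c)$ genuinely correspond to K-classes of honest $\inv$-equivariant sheaves, as the chosen generators are only a rational basis; this is the integrality issue flagged in Remark \ref{rem: A Picard rank bigger than 1} and pinned down in the forthcoming subsection on integral classes on $\Km A$, which I would simply invoke.
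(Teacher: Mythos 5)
Your proposal is correct and follows essentially the same route as the paper's proof: identifying $\inv$-PT pairs with orbifold PT pairs on $[X/\inv]$, matching the $K$-theory class $m\alpha_d + n[\OO_{\pt}] + \sum_i v_i[\OO_{x_i}\otimes R_-]$ with the discrete data $([\supp(F)], \chi(F)) = (m\beta_d, nR_{\reg} + (\sum_i v_i)R_-)$, and observing that the substitution $w_i = w$ performs the collection over tuples with fixed $\sum_i v_i = \epsilon$. You simply spell out the computation of $\chi$ on each generator and the integrality caveat, both of which the paper leaves implicit.
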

\begin{proof}
Our $\inv$-PT pairs on $X$ are equivalent to PT pairs on the stack
quotient $[X/\inv ]$. However, keeping track of the $K$-theory class
of the sheaf on $[X/\inv ]$ is a refinement of the discrete data used
for $\inv$-PT pairs. The lemma follows from observing that a sheaf on
$[X/\inv ]$ in the $K$-theory class
\[
m\alpha_{d} +n[\OO_{\pt}]+\sum_{i} v_{i}[\OO_{x_{i}}\otimes R_{-}]
\]
corresponds to an $\inv$-equivariant sheaf $F$ with $[\supp
(F)]=m\beta_{d}$ and 
\[
 \chi (F) = nR_{\reg}+(\sum_{i}v_{i})R_{-} .
\]
\end{proof}

Next we define the \emph{exceptional lattice} 
\[
\Lambda =\oplus_{i}\ZZ\left\langle E_{i} \right\rangle \subset \Pic (\Sresolve ) 
\]
and we define
\[
 \Gamma_{m,d}=\left\{v\in \Lambda \otimes \QQ:  \!\!\!\!  \quad \text{$ m\gamma _{d}
 +v$ is a non-zero integral class in }\Pic (\Sresolve) \right\}. 
\]

For $v=\sum_{i}v_{i}E_{i}$ we will use the following notation
\[
l(v)=\sum_{i}v_{i}, \quad
v^{2}=-2\sum_{i}v^{2}_{i}. 
\]

We can write the log of the partition function on $Y$ in terms of the
Gopakumar-Vafa invariants of $Y$ using Equation~\eqref{eqn:
log(ZPT)=sum 1/k etc}. We then specialize $w_{i}$ to $w$ to get:
\[
\restriction{\log Z^{\PT}(Y)}{w_{i}=w}
 = \sum_{k>0}\sum_{m\geq 0}\sum_{v\in \Gamma_{m,d}} \sum_{h\geq 0}
 \frac{1}{k}\nPT{h}{m\gamma_{d}+v }
 Q^{km}w^{kl(v)}\psi^{h-1}_{-(-y)^{k}}.
\]
On the other hand, the invariants $\nPT{g,h}{m\beta_{d}}$ on $X$ are
by definition given by
\[
\log Z^{\PT}(X,\inv ) = \sum_{k,m>0}\sum_{g,h} \frac{1}{k}Q^{km}
\nPT{g,h}{m\beta_{d}} \psi^{h-1}_{-(-y)^{k}}\psi^{g+1-2h}_{w^{k}}.
\]

Taking the log of Equation~\eqref{eqn: ZPT(X)=ZPT(Y)/ZPTexc},
observing that $Z^{\PT}_{\exc}(Y)=\restriction{Z^{\PT}(Y)}{Q=0}$, and
applying Lemma~\ref{lem: ZPT(X,i)=ZPT([X/i])|wi=w}, we
get
\[
\log Z^{\PT}(X,\inv ) = \restriction{\log Z^{\PT}(Y)}{w_{i}=w} -
\restriction{\log Z^{\PT}(Y)}{w_{i}=w,Q=0}. 
\]
Combining this with the previous two equations, we arrive at
\[
\begin{split}
\sum_{k,m>0} \frac{Q^{km}}{k} & \left(\sum_{g,h} \nPT{g,h}{m\beta_{d}} \psi^{h-1}_{-(-y)^{k}} \psi^{g+1-2h}_{w^{k}} \right) \\
& =\sum_{k,m>0}\frac{Q^{km}}{k} \left(\sum_{h\geq 0}\sum_{v\in \Gamma_{m,d}} \nPT{h}{m\gamma_{d}+v } w^{kl(v)} \psi^{h-1}_{-(-y)^{k}} \right). 
\end{split}
\]

By M\"{o}bius inversion (or a simple induction argument), the quantities
in the parenthesis in the above equation must be equal for all $k $
and $m$. In particular, by setting $k=1$ we've proved
\begin{equation}\label{eqn: nPTgh on X in terms of nPT on Y}
\sum_{g,h} \nPT{g,h}{m\beta_{d}}\psi^{h-1}_{y} \psi^{g+1-2h}_{w} =
\sum_{h\geq 0}\sum_{v\in \Gamma_{m,d}} \nPT{h}{m\gamma_{d}+v } w^{l(v)}
\psi^{h-1}_{y}.
\end{equation}

The invariants $\nPT{h}{m\gamma_{d}+v }$ are determined by the KKV
formula since $Y=\Sresolve\times \CC$ is a local $K3$ surface. One
formulation of the KKV formula from Section~\ref{subsec: KKV formula}
is the following. For any effective curve class $C$, $n_{h}(C)$ is
given by
\[
\sum_{h\geq 0}n_{h}(C) \psi_{y}^{h-1} = 
\left[\frac{1}{\phi_{10,1}(q,-y)} \right]_{q^{\frac{C^{2}}{2}}} 
\]
where
\begin{equation*}
\phi_{10,1}(q,-y) =-\psi_{y}\cdot
q\prod_{n=1}^{\infty}(1+yq^{n})^{2}(1+y^{-1}q^{n})^{2}(1-q^{n})^{20}  
\end{equation*}
and where $\left[\dotsb \right]_{q^{a}}$ denotes the coefficient of
$q^{a}$ in the expression $[\dotsb]$ . Applying this to
Equation~\eqref{eqn: nPTgh on X in terms of nPT on Y} and using the
facts that $\gamma_{d}^{2}=d$ and $\gamma_{d}\in \Lambda^{\perp}$ we get
\[
(m\gamma_{d}+v )^{2} = m^{2}d+v^{2}
\]
and so
\begin{align*}
\sum_{g,h} \nPT{g,h}{m\beta_{d}} \psi_{y}^{h-1}\psi_{w}^{g+1-2h} =&
\sum_{v\in \Gamma_{m,d}} \left[\frac{w^{l(v)}}{\phi_{10,1}(q,-y)} \right]_{q^{\half (m^{2}d+v^{2})}}\\
=&\left[\frac{\Theta_{\Gamma_{m,d}}(q,w)}{\phi_{10,1}(q,-y)}
\right]_{q^{\half m^{2}d}}
\end{align*}
where for any subset $T\subset \Lambda \otimes \QQ$ we've defined 
\[
\Theta_{T}(q,w) = \sum_{v\in T}
q^{-\frac{v^{2}}{2}}w^{l(v)}. 
\]

In Section~\ref{subsec: integral classes on KmA} we compute
$\Gamma_{m,d}$. The results are:
\begin{proposition}\label{prop: the lattice Gammamd}
The subset $\Gamma_{m,d}\subset \Lambda \otimes \QQ$ is given as follows:
\begin{itemize}
\item If $S$ is an Abelian surface, or a Type (I) Nikulin surface, then
\[
\Gamma_{m,d}=
\begin{cases}
L&\text{if $m$ is even}\\
L+ r_{0}&\text{if $m$ is odd and $d$ is even}\\
L+ r_{1}&\text{if $m$ is odd and $d$ is odd}\\
\end{cases}
\]
where in the Abelian surface case $L=K $,
the so-called Kummer lattice, an even, negative definite rank 16
lattice, and in the Type (I) Nikulin case, $L=N$
is the so-called Nikulin lattice, an even, negative definite rank 8
lattice. The vectors $r_{0}$ and $r_{1}$ are particular vectors we will define
in Section~\ref{subsec: integral classes on KmA}. See
Section~\ref{subsec: integral classes on KmA} for the definition of
$K$ and $N$.
\item If S is a Type (II) Nikulin surface, then $d$ is even and
\[
\Gamma_{m,d} = N
\]
where $N$ is the Nikulin lattice.
\end{itemize}
\end{proposition}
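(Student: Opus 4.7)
The plan is to leverage the explicit description of $\Pic(\Sresolve)$ as an overlattice of $\pi^{*}\Pic(S/\inv)\oplus \Lambda$ due to Garbagnati-Sarti, where $\pi:\Sresolve\to S/\inv$ is the minimal resolution. By construction, the Kummer lattice $K$ and the Nikulin lattice $N$ are the saturations of $\Lambda$ inside $\Pic(\Sresolve)$; equivalently, they are precisely the sets of $v\in \Lambda\otimes \QQ$ such that $v$ is itself an integral class in $\Pic(\Sresolve)$. Consequently, once I locate $m\gamma_{d}$ in the discriminant group of $\pi^{*}\Pic(S/\inv )\oplus \Lambda$ inside $\Pic(\Sresolve )$, the condition $m\gamma_{d}+v\in \Pic(\Sresolve )$ cuts out either $L$ itself or a specific coset $L+r$, and the subset $\Gamma_{m,d}$ is immediately read off.

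First, I would compute $\gamma_{d}=c_{1}(FM(\alpha_{d}))$ explicitly as a $\QQ$-class in $\Pic(\Sresolve)$. Since $\alpha_{d}=\tinyhalf t_{*}(\operatorname{ch}^{-1}(\beta_{d}))$ already carries a factor of $\tinyhalf$, and since Fourier-Mukai sends $\OO_{x_{i}}\otimes R_{-}$ to $\OO_{E_{i}}(-1)$, a direct chase of Chern characters shows that $\gamma_{d}$ is of the form $\tinyhalf \pi^{*}\overline{\beta}_{d}$ plus an exceptional correction in $\tinyhalf \Lambda$ determined by the number of fixed points of $\inv$ on which $\beta_{d}$ meets with odd local multiplicity. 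Second, I would use $\gamma_{d}\in \Lambda^{\perp}$ and $\gamma_{d}^{2}=d$ (which are already invoked in the argument following this proposition) together with the Garbagnati-Sarti integrality generators (certain $\tinyhalf \sum_{i\in T}E_{i}$ for $T$ in a distinguished binary code) to match $m\gamma_{d}$ to its residue class modulo $\Pic(\Sresolve )$.

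The parity dichotomy then emerges naturally: for $m$ even, $m\gamma_{d}$ is integral and $\Gamma_{m,d}=L$; for $m$ odd, the residue of $m\gamma_{d}$ in the discriminant quotient $\Pic(\Sresolve )/(\pi^{*}\Pic(S/\inv )\oplus \Lambda)$ depends only on the parity of $d$ (via $\gamma_{d}^{2}=d$ and the quadratic form on the discriminant group), yielding the two distinct cosets $L+r_{0}$ and $L+r_{1}$. In the Type (II) Nikulin case, I would verify that the quotient lattice structure, combined with the fact that the invariant primitive classes on a Type (II) Nikulin surface necessarily have even self-intersection (a lattice-theoretic constraint on the Neron-Severi group of a Type (II) Nikulin $K3$), forces $d$ even and identifies $\Gamma_{m,d}$ with $N$ in both parities of $m$.

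The main obstacles I anticipate are: (i) keeping careful track of the $\tinyhalf$ factor coming from $t_{*}$ and the precise exceptional contribution to $\gamma_{d}$, since this is what makes $\gamma_{d}$ half-integral in a way that interacts nontrivially with the Garbagnati-Sarti glue vectors; (ii) explicitly identifying the vectors $r_{0},r_{1}\in \Lambda\otimes \QQ$ in a basis-independent way so that they depend only on the parity of $d$ and not on the choice of $\beta_{d}$; and (iii) handling the Type (II) case, where the distinction between Type (I) and Type (II) Nikulin surfaces is genuinely lattice-theoretic and requires invoking the classification of symplectic involutions on $K3$ surfaces to rule out the odd-$d$ scenario.
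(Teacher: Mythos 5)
Your overall strategy coincides with the paper's: everything is reduced to the Garbagnati--Sarti description of $\Pic (\Sresolve )$ as an index-two overlattice of $\ZZ \left\langle 2\gamma_{d} \right\rangle\oplus L$ glued by a single vector $\gamma_{d}+r_{d}$ (respectively $\ZZ \left\langle \gamma_{d} \right\rangle\oplus N$ in Type (II)), after which $\Gamma_{m,d}$ is read off as $L$ for $m$ even and $L+r_{d}$ for $m$ odd. Indeed the paper's proof is nothing more than the citation of Propositions~\ref{prop: Pic of Kummer} and \ref{prop: Picard lattice of a Nikulin resolution}, and your reduction to that lattice-theoretic statement, as well as your handling of the Type (II) case, is sound.

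However, two steps of your derivation would fail as written. First, $\gamma_{d}$ has no ``exceptional correction in $\tinyhalf \Lambda$'': it lies in $\Lambda^{\perp}$ (this is used immediately after the proposition to get $(m\gamma_{d}+v)^{2}=m^{2}d+v^{2}$, and would be destroyed by any nonzero component in the negative definite lattice $\Lambda\otimes\QQ$). The half-integrality lives entirely in the glue vector $\gamma_{d}+r_{d}$, neither of whose summands is separately integral. Second, and more seriously, you propose to pin down the residue $[r_{d}]\in \Lambda^{\vee}/\Lambda$ from ``$\gamma_{d}^{2}=d$ and the quadratic form on the discriminant group.'' Evenness of $\Pic (\Sresolve )$ only forces $r_{d}^{2}\equiv d \Mod{2}$, and this does not determine the class: for the Kummer lattice, writing $r=\tinyhalf\sum_{i}\rho_{i}E_{i}$ one has $r^{2}=-\tinyhalf c_{1}(\rho )$, so the constraint for $d$ even merely says $c_{1}(\rho )\equiv 0 \Mod 4$, which is compatible with $c_{1}=4$, $8$, or $12$; these choices produce genuinely different coset decompositions of $K+r_{0}$ into $\Lambda$-cosets and hence different theta functions $\Theta_{K_{\sh}}$ in Proposition~\ref{prop: formulas for ThetaK and ThetaKsh}. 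The identification of $r_{d}$ as the characteristic function of $P_{1}$ (resp.\ $P_{1}\Delta P_{2}$) in the Kummer case, and as $\tinyhalf (E_{1}+\dotsb +E_{4})$ (resp.\ $\tinyhalf (E_{1}+E_{2})$) in the Type (I) Nikulin case, must be imported from Garbagnati--Sarti (Theorem~2.7 of \cite{Garbagnati-Sarti-Kummer-surfaces-2016} and Proposition~2.1/Corollary~2.2 of \cite{Garbagnati-Sarti-even-set-2008}); it cannot be recovered from discriminant-form parity alone.
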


The shifted lattices $L+r_{i}$ have the
property that all their vectors have squares which are congruent to $i$
modulo 2.  It follows that we may write
\[
\Theta_{L+ r_{0}}(q,w)+\Theta_{L+ r_{1}}(q,w) =
\Theta_{L_{\sh}}(q,w)
\]
where 
\[
L_{\sh }= (L+r_{0})\cup (L+r_{1}).
\]

In summary, we've shown that
\begin{equation}\label{eqn: formula for nPTgh(mbetad)}
\sum_{g,h} \nPT{g,h}{m\beta_{d}} \psi^{h-1}_{y}\psi_{w}^{g+1-2h}
=\left[\frac{\Theta_{T}(q,w)}{\phi_{10,1}(q,-y)} \right]_{q^{\half
m^{2}d}}
\end{equation}
where
\begin{equation}\label{eqn: Lattice type}
T=
\begin{cases}
 N&\text{if $S$ is Type (II) Nikulin, $\quad (\NiII)$}\\
N& \text{ if $S$ is Type (I) Nikulin and $m$ is even, $\quad (\evNiI)$}\\
 N_{\sh }&\text{if $S$ is Type (I) Nikulin and $m$ is odd, $\quad (\oddNiI )$}\\
 K&\text{if $S$ is Abelian and $m$ is even, $\quad (\evAb)$}\\
 K_{\sh }&\text{if $S$ is Abelian and $m$ is odd. $\quad (\oddAb )$}\\
\end{cases}
\end{equation}

We now make the crucial observation that the right hand side of the
Equation~\eqref{eqn: formula for nPTgh(mbetad)} only depends on the
curve class $m\beta_{d}$ through its square $(m\beta_{d})^{2} =
2m^{2}d$ and (possibly) its divisibility modulo 2 (i.e. $m \!\!\! \mod
2$). This leads to the main theorem of this section:

\begin{theorem}\label{thm: main statement about nPT for
Abelian/Nikulin surfaces} Let $\beta$ be \emph{any} effective
$\inv$-invariant curve class (not necessarily primitive) on an Abelian
or Nikulin surface $S$ with $\beta^{2}=2d$. Then the invariants
$\nPT{g,h}{\beta}$ of $S\times \CC$ only depend on $(g,h,d)$ in the
case where $S$ is a Type (II) Nikulin surface and on $(g,h,d)$ and
whether the divisibility of $\beta $ is odd or even in the other
cases. Denoting these invariants as $\nPT{g,h}{d;\type}$ where
$\type\in \{\oddAb ,\evAb ,\oddNiI ,\evNiI ,\NiII \}$. Then
\[
\sum_{g,h} \nPT{g,h}{d; \type } \psi_{y}^{h-1}\psi_{w}^{g+1-2h} =
\left[\frac{\Theta_{T}(q^{2},w)}{\phi_{10,1}(q^{2},-y)}
\right]_{q^{d}} .
\]
Moreover $\Theta_{T}(q^{2},w)$ is given explicitly by
\begin{equation}\label{eqn: formulas for ThetaT}
\Theta_{T}(q^{2},w) =
\begin{cases}
\theta_{0}^{16}+30\theta_{0}^{8}\theta_{1}^{8}+\theta_{1}^{16}
&\text{if $\quad T=K$,   (type  $\evAb $  ),}\\
&\\
\theta_{0}^{8}+\theta_{1}^{8}
&\text{if $\quad T=N$, (types $\evNiI$ and $\NiII $),}\\
&\\
4\cdot \frac{\Delta (q^{2})}{\Delta (q)^{2}}\cdot \phi_{10,1}^{2}(q,-w)
&\text{if $\quad T=K_{\sh}$, (type $\oddAb $),}\\
&\\
-\frac{\Delta (q^{2})^{\half}}{\Delta (q)}\cdot \phi_{10,1}(q,-w)
&\text{if $\quad T=N_{\sh}$, (type $\oddNiI $),}\\
\end{cases}
\end{equation}
where
\[
\Delta (q) = q\prod_{n=1}^{\infty} (1-q^{n})^{24}
\]
is the unique modular cusp form of weight 12, 
\[
\phi_{10,1}(q,y) =-\psi_{-y}\cdot
q\prod_{n=1}^{\infty}(1-yq^{n})^{2}(1-y^{-1}q^{n})^{2}(1-q^{n})^{20}  
\]
is the
unique Jacobi cusp form of weight 10 and index 1, and
\[
\theta _{i} = \theta_{i}(q^{2},w) = \sum_{k\in \ZZ +\frac{i}{2}} q^{2k^{2}}w^{k}
\]
are the standard rank 1 theta functions.

We note that for $\nPT{g,h}{d;\evAb}$ and $\nPT{g,h}{d;\evNiI}$, $d$
is necessarily divisible by 4, and for $\nPT{g,h}{d;\NiII}$, $d$ is
necessarily even.
\end{theorem}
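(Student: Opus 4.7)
The plan is to derive the theorem from Equation~\eqref{eqn: formula for nPTgh(mbetad)} and Proposition~\ref{prop: the lattice Gammamd}, and then separately establish the closed-form formulas for $\Theta_{T}(q^{2},w)$ in Equation~\eqref{eqn: formulas for ThetaT}.

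First I would prove the structural assertions (1) and (2) together with the displayed integral formula. Given an effective $\inv$-invariant class $\beta$ with $\beta^{2}=2d$, write $\beta = m\beta'$ where $m$ is the divisibility and $\beta'$ is primitive, so $\beta'^{2}=2d/m^{2}$. Applying Equation~\eqref{eqn: formula for nPTgh(mbetad)} with primitive class $\beta'$ and multiplicity $m$ gives
\[
\sum_{g,h} \nPT{g,h}{\beta}\,\psi^{h-1}_{y}\psi^{g+1-2h}_{w} = \left[\frac{\Theta_{T}(q,w)}{\phi_{10,1}(q,-y)}\right]_{q^{d/2}}.
\]
By Proposition~\ref{prop: the lattice Gammamd} the lattice (or shifted lattice) $T$ depends only on the surface type and on the parity of $m$, so the right-hand side depends on $\beta$ only through $d$ and $\type$. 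This proves (1) and (2). The formal substitution $q\mapsto q^{2}$ shifts the coefficient extraction from $q^{d/2}$ to $q^{d}$ and replaces $\Theta_{T}(q,w)/\phi_{10,1}(q,-y)$ by $\Theta_{T}(q^{2},w)/\phi_{10,1}(q^{2},-y)$, yielding the displayed formula of the theorem.

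It remains to verify Equation~\eqref{eqn: formulas for ThetaT}. For the unshifted cases $T=K$ and $T=N$, I would use the code-theoretic descriptions of the Kummer and Nikulin lattices from \cite{Garbagnati-Sarti-Kummer-surfaces-2016, Garbagnati-Sarti-even-set-2008}: $K$ is a gluing of $\bigoplus_{i=1}^{16}\ZZ E_{i}$ along the $[16,5,8]$ first-order Reed-Muller code $\mathcal{C}_{K}$, while $N$ is a gluing of $\bigoplus_{i=1}^{8}\ZZ E_{i}$ along the $[8,1,8]$ repetition code $\mathcal{C}_{N}$. Decomposing a vector $v\in L$ as $v=\tinyhalf \varepsilon + u$ with $\varepsilon \in \mathcal{C}_{L}$ and $u \in \bigoplus_{i}\ZZ E_{i}$, the theta series factors:
\[
\Theta_{L}(q^{2},w) = \sum_{\varepsilon \in \mathcal{C}_{L}}\prod_{i=1}^{r}\theta_{\varepsilon_{i}}(q^{2},w).
\]
Evaluating the weight enumerator of $\mathcal{C}_{K}$ (which is $1+30 z^{8}+z^{16}$) and of $\mathcal{C}_{N}$ (which is $1+z^{8}$) then produces the two closed forms stated for $\evAb$ and for $\evNiI/\NiII$.

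For the shifted cases $T=K_{\sh}$ and $T=N_{\sh}$, the coset structure prevents a direct product factorization, so I would argue instead by modular-forms comparison. Both sides of each proposed identity are Jacobi (quasi-)forms on a congruence subgroup of $\mathrm{SL}_{2}(\ZZ)$ with matching weight, index, and transformation character, and they lie in a finite-dimensional space; equality can therefore be reduced to matching finitely many low-order Fourier coefficients of $\Theta_{T}(q^{2},w)$, each of which is a computable finite lattice sum. The main obstacle is the half-integer power $\Delta(q^{2})^{1/2}$ in the formula for $\Theta_{N_{\sh}}$: I would handle this by first establishing the identity after squaring, so that both sides become honest holomorphic Jacobi forms of integer weight, and then fix the sign branch by comparing the leading $q$-coefficient.
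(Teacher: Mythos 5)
Your derivation of the structural claims and of the main displayed formula from Equation~\eqref{eqn: formula for nPTgh(mbetad)} and Proposition~\ref{prop: the lattice Gammamd} is exactly the paper's argument, and your treatment of the unshifted cases $T=K$ and $T=N$ is the paper's coset method in different clothing: the affine linear maps $\FF_{2}^{4}\to\FF_{2}$ of Lemma~\ref{lem: description of K} are precisely the first-order Reed--Muller code, and your factorization of $\Theta_{L}$ over codewords is Lemma~\ref{lem: coset method for computing ThetaT} with the weight enumerators $1+30z^{8}+z^{16}$ and $1+z^{8}$ reproducing Propositions~\ref{prop: formulas for ThetaK and ThetaKsh} and \ref{prop: formulas for ThetaN and ThetaNsh}. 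Where you genuinely diverge is the shifted cases $K_{\sh}$ and $N_{\sh}$. The paper still enumerates the cosets explicitly (using the descriptions of $r_{0},r_{1}$ from Propositions~\ref{prop: Pic of Kummer} and \ref{prop: Picard lattice of a Nikulin resolution}), observes that the resulting polynomials factor as $\theta_{0}^{2}\theta_{1}^{2}(\theta_{0}^{2}+\theta_{1}^{2})^{2}$ and $4\theta_{0}^{4}\theta_{1}^{4}(\theta_{0}^{2}+\theta_{1}^{2})^{4}$, and then proves the single product identity \eqref{eqn: product formula identity for Ksh} by elementary means (Jacobi triple product plus Lemma~\ref{lem: quadratic theta function identity}); this is completely self-contained and yields the infinite product formulas of Theorem~\ref{thm: Formulas for nPTgh for X=SxC} as a byproduct. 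Your route via finite-dimensionality of spaces of Jacobi forms is a legitimate and standard alternative that avoids the coset bookkeeping, and your squaring trick does dispose of the branch issue (indeed $\Delta(q^{2})^{1/2}=\eta(2\tau)^{12}$ is already an honest weight-$6$ form, so the worry is mild); but be aware that the heavy lifting is deferred rather than eliminated: you must still pin down the congruence subgroup and multiplier system for a coset theta series of $K_{\sh}$, establish that both sides have the same transformation behaviour, produce an explicit Sturm-type bound for the relevant space of Jacobi forms of weight $8$ and index $2$ (resp.\ $4$ and $1$), and compute the low-order coefficients of $\Theta_{K_{\sh}}$ --- which itself requires essentially the same explicit description of the cosets that the paper's enumeration uses. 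So your approach trades an elementary but fiddly identity for a cleaner-looking but less self-contained modularity argument; both are sound.
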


\begin{remark}
It is straightforward to see that the above formulas for the case of
$\oddAb$ and $\oddNiI$ lead to the product formulation given in
Theorem~\ref{thm: Formulas for nPTgh for X=SxC} in the Introduction.
\end{remark}

\begin{remark}
The theta functions $\Theta_{K}(q^{2},w)$ and
$\Theta_{K_{\sh}}(q^{2},-w)$ are Jacobi forms of weight 8 and index 2
(for some congruence subgroup), while the theta functions
$\Theta_{N}(q^{2},w)$ and $\Theta_{N_{\sh}}(q^{2},-w)$ are Jacobi
forms of weight 4 and index 1 (for some congruence subgroup). It would
be nice to have a direct, lattice theoretic explination of the
identity $\Theta_{K_{\sh}}=4\Theta_{N_{\sh}}^{2}$.
\end{remark}

To complete the proof of Theorem~\ref{thm: main statement about nPT
for Abelian/Nikulin surfaces}, we must prove Proposition~\ref{prop:
the lattice Gammamd} and we must prove the formulas for
$\Theta_{T}(q^{2},w)$ given by Equation~\eqref{eqn: formulas for
ThetaT}. This is carried out in the next two subsections.

\subsection{The Picard lattice of $\Sresolve$.}\label{subsec: integral classes on KmA}

Recall that $S$ is an Abelian or Nikulin surface and $\Sresolve \to
S/\inv$ is the associated Kummer $K3$ or Nikulin resolution
respectively. In this section we describe $\Pic (\Sresolve )$ and in
particular prove Proposition~\ref{prop: the lattice Gammamd}.  Recall
also that we defined the exceptional lattice:
\[
\Lambda =\oplus_{i}\ZZ\left\langle  E_{i} \right\rangle\subset \Pic (\Sresolve ).
\]
\begin{definition}\label{defn: the lattice L (which is either K or N)}
Let $L\subset \Pic (\Sresolve )$ be the saturation of $\Lambda $ in
$\Pic (\Sresolve )$, i.e. the smallest primitive sublattice containing
$\Lambda$ such that $\Lambda$ generates $L$ over $\QQ$. If $\Sresolve$
is a Kummer surface, then $L$ is by definition $K$, the Kummer
lattice.  If $\Sresolve$ is a Nikulin resolution, then $L$ is by
definition $N$, the Nikulin lattice.
\end{definition}

We note that by construction, we have the inclusions
\[
\Lambda \subset L\subset L^{\vee}\subset \Lambda^{\vee}=\half \Lambda .
\]
Explicit descriptions of $K$ and $N$ are given in the following
lemmas. The first is due to Nikulin, see for example \cite[Lemma
5.2]{Morrison-K3surfaces-1984} .

\begin{lemma}\label{lem: description of N} The Nikulin lattice $N$ is
the overlattice of $\Lambda$ generated by $\Lambda$ and $\Ehat =\half
\sum_{i}E_{i}$.
\end{lemma}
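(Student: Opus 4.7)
The plan is to establish two facts: first, that $\Ehat = \half \sum_i E_i$ is an integral class in $\Pic(\Sresolve)$; and second, that modulo $\Lambda$, it generates the full saturation $N/\Lambda$.

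For the first fact, the key geometric input is the construction of a natural double cover of $\Sresolve$. The quotient map $S \to S/\inv$ is a double cover ramified exactly over the $8$ fixed points of the symplectic involution $\inv$, and each fixed point contributes one exceptional $(-2)$-curve $E_i$ on $\Sresolve$. Taking the fiber product with the resolution $\Sresolve \to S/\inv$ and normalizing/resolving yields a smooth double cover $\tilde{S} \to \Sresolve$ branched precisely over $\sum_i E_i$. The existence of such a double cover is equivalent (by the standard correspondence between double covers and 2-torsion data) to the existence of a line bundle $\mathcal{E} \in \Pic(\Sresolve)$ with $\mathcal{E}^{\otimes 2} \cong \OO(\sum_i E_i)$, i.e.\ to $\Ehat$ being integral. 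Hence $\Ehat \in N$.

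For the second fact, since the $E_i$ are pairwise disjoint $(-2)$-curves, $\Lambda$ is an orthogonal sum of eight copies of $\langle -2 \rangle$, and so $\Lambda^{\vee} = \half \Lambda$. Since $N$ is the saturation of $\Lambda$ inside $\Pic(\Sresolve)$, we have the chain $\Lambda \subset N \subset \Lambda^{\vee} = \half \Lambda$, and the quotient $N/\Lambda$ embeds in $\half \Lambda/\Lambda \cong (\ZZ/2)^{8}$. Each non-trivial coset is thus represented by an element of the form $\half \sum_{i \in T} E_i$ for some non-empty subset $T \subset \{1, \dotsc, 8\}$, and its integrality in $\Pic(\Sresolve)$ is equivalent to $\sum_{i \in T} E_i$ being $2$-divisible, i.e.\ to $T$ being an \emph{even set} of disjoint $(-2)$-curves on the K3 surface $\Sresolve$.

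The main non-trivial input is then Nikulin's classical theorem on even sets of disjoint $(-2)$-curves on a $K3$ surface, which states that any such set has cardinality in $\{0, 8, 16\}$ (this is the main step we would invoke as a black box, citing \cite{Morrison-K3surfaces-1984}). Since $\Sresolve$ has exactly the $8$ exceptional curves $E_i$ as its available disjoint $(-2)$-curves among those spanning $\Lambda$, the only possibilities are $T = \emptyset$ and $T = \{1,\dotsc, 8\}$. Therefore $N/\Lambda \cong \ZZ/2$, generated by the class of $\Ehat$, which gives $N = \langle \Lambda, \Ehat \rangle$ as claimed.
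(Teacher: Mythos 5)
The paper does not actually prove this lemma; it records it as a known result of Nikulin, citing Lemma~5.2 of \cite{Morrison-K3surfaces-1984}. Your argument is correct and is essentially the standard proof found there: the smooth double cover $\widetilde{S}\to \Sresolve$ obtained by blowing up the eight fixed points of $\inv$ and taking the quotient is branched exactly along $\sum_{i}E_{i}$, which forces $\Ehat$ to be integral, while Nikulin's theorem that an even set of disjoint $(-2)$-curves on a $K3$ surface has cardinality $0$, $8$, or $16$ rules out all other cosets in $\Lambda^{\vee}/\Lambda$ and gives $N/\Lambda \cong \ZZ/2$ generated by $\Ehat$.
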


While the above shows that the Nikulin lattice is obtained from the
exceptional lattice by adding a single vector, the situation for the
Kummer lattice is more complicated. The pithiest way to state the
result is as follows (see \cite[\S~VIII.5]{Compact-Complex-Surfaces-BHPV})
\begin{lemma}\label{lem: description of K}
Under the natural identification
\[
\Lambda^{\vee}/\Lambda
\cong \operatorname{Maps}(\FF_{2}^{4},\FF_{2}),
\]
the Kummer lattice $K$ is the overlattice of $\Lambda$ such that the
inclusion 
\[
K/\Lambda \subset \Lambda^{\vee}/\Lambda 
\]
corresponds to the inclusion
\[
\operatorname{Aff}(\FF_{2}^{4},\FF_{2})\subset \operatorname{Maps}(\FF_{2}^{4},\FF_{2})
\]
of affine linear maps (including the two constant maps) into all maps.
\end{lemma}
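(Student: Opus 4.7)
The plan is to identify $K/\Lambda$ as an $\FF_{2}$-subspace of $\Lambda^{\vee}/\Lambda$ by exhibiting an integral divisor class on $\Sresolve$ for every affine characteristic function, then matching cardinalities.

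First I would fix the identification of the lemma. Indexing the sixteen exceptional curves $\{E_{i}\}$ by the two-torsion points $i\in A[2]\cong \FF_{2}^{4}$, the assignment $f\mapsto \tfrac{1}{2}\sum_{i}f(i)E_{i}$ induces the stated isomorphism $\operatorname{Maps}(\FF_{2}^{4},\FF_{2})\xrightarrow{\sim}\Lambda^{\vee}/\Lambda$, using only that the $E_{i}$ are pairwise orthogonal $(-2)$-curves.

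Next I would establish the inclusion $\operatorname{Aff}(\FF_{2}^{4},\FF_{2})\subseteq K/\Lambda$ by constructing explicit integral classes in the three cases $f=\ell+c$. The zero function gives the trivial class. For the constant function $f\equiv 1$, the class $\tfrac{1}{2}\sum_{i}E_{i}$ lies in $\Pic(\Sresolve)$ because the quotient map $\pi\colon \widetilde{A}\to \Sresolve$ from the blow-up of $A$ at its two-torsion is a double cover branched precisely along $\sum_{i}E_{i}$; hence $\pi_{*}\OO_{\widetilde{A}}=\OO_{\Sresolve}\oplus L^{-1}$ with $L^{\otimes 2}\cong \OO_{\Sresolve}(\sum_{i}E_{i})$, and $c_{1}(L)$ represents $\tfrac{1}{2}\sum_{i}E_{i}$. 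For a nonzero linear functional $\ell\in \operatorname{Hom}(A[2],\FF_{2})$ I would use the Weil pairing to produce the corresponding symmetric $2$-torsion line bundle $M_{\ell}\in \Pic^{0}(A)[2]$. The equivariant structure of $M_{\ell}$ produces a sign $\pm 1$ at each $2$-torsion point, and a standard check shows that the locus of $-1$ signs is precisely an affine hyperplane. Descending $\pi^{*}M_{\ell}$ to $\Sresolve$ then produces an integral class whose image in $\Lambda^{\vee}/\Lambda$ is the characteristic function of this hyperplane times $\tfrac{1}{2}$. Tensoring with the constant-$1$ class if needed, and letting $\ell$ range over the $15$ nonzero linear functionals, this exhausts the $2^{5}=32$ affine functions.

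For the reverse inclusion I would use a cardinality count: since $|\operatorname{Aff}(\FF_{2}^{4},\FF_{2})|=2^{5}=32$, it suffices to prove $[K:\Lambda]\leq 32$. This follows from Nikulin's classical computation that the Kummer lattice has discriminant group of order $2^{6}$, so $[K:\Lambda]^{2}=|\Lambda^{\vee}/\Lambda|/|K^{\vee}/K|=2^{16}/2^{6}=2^{10}$ and hence $[K:\Lambda]=32$, forcing equality. The main obstacle is the linear case in the construction above: tracking the Weil-pairing description of $M_{\ell}$ and verifying that its descent through the blow-up/quotient diagram selects the correct affine hyperplane. Since this computation is classical and essentially due to Nikulin, in practice I would cite \cite[\S VIII.5]{Compact-Complex-Surfaces-BHPV} rather than reproduce it.
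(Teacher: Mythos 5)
The paper offers no internal proof of this lemma to compare against: it is quoted verbatim from Nikulin's theorem on the Kummer lattice, with a bare citation to \cite[\S~VIII.5]{Compact-Complex-Surfaces-BHPV}. Your sketch is a correct outline of that classical argument, and indeed goes further than the paper does. Two points deserve care if you were to write it out. First, the descent step for a nonzero linear functional $\ell$ is not quite ``descend $\pi^{*}M_{\ell}$'': the normalized linearization of $\pi^{*}M_{\ell}$ acts on the fiber over the exceptional curve indexed by $i\in A[2]$ by the Weil-pairing sign of $(i,\ell)$, so the bundle itself does not descend where that sign is $-1$. One must first twist by $\OO\bigl(\sum_{i\in W}E_{i}\bigr)$, where $W=\{i:\langle i,\ell\rangle=-1\}$ is the affine hyperplane, using that the involution acts by $-1$ on the normal bundle of each fixed exceptional curve; the twisted bundle is then pulled back from $\Sresolve$, and pushing forward yields $\tfrac{1}{2}\sum_{i\in W}E_{i}\in\Pic(\Sresolve)$. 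Second, your cardinality bound invokes $|K^{\vee}/K|=2^{6}$ as ``Nikulin's classical computation,'' but in the classical treatment that discriminant is computed \emph{from} the very description of $K$ being proved, so as stated the reverse inclusion is circular. The non-circular route is to use the isometry $K^{\perp}\cong H^{2}(A,\ZZ)(2)\cong U(2)^{3}$ inside the unimodular lattice $H^{2}(\Sresolve,\ZZ)$, which gives $|K^{\vee}/K|=2^{6}$ and hence $[K:\Lambda]^{2}=2^{16}/2^{6}$, i.e.\ $[K:\Lambda]=32$, forcing equality with your $32$ constructed classes. With those two repairs the argument is complete; simply citing \cite[\S~VIII.5]{Compact-Complex-Surfaces-BHPV}, as you propose at the end and as the paper in fact does, is of course also acceptable.
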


We next describe $\Pic (\Sresolve )$ which will allow us to prove
Proposition~\ref{prop: the lattice Gammamd}. The class $\gamma_{d}$
might not be an integral class, but it turns out that the embedding
\[
\ZZ \left\langle 2\gamma_{d} \right\rangle \oplus L \subset \Pic (\Sresolve )
\]
is always index two.  Depending on the parity of $d$ and the type of
the surface $S$, the order two quotient group is generated either by
$\gamma_{d}$, or by $\gamma_{d}+r_{d}$ where $r_{d}$ is a certain
vector only depending on $d \!\!\! \mod 2$.

In the case where $S$ is Abelian and $\Sresolve$ is a Kummer $K3$, we
quote Garbagnati-Sarti \cite[Theorem~2.7]{Garbagnati-Sarti-Kummer-surfaces-2016}
adapted to our notation.

\begin{proposition}\label{prop: Pic of Kummer}
The Picard lattice of the Kummer surface $\Sresolve$ is the index 2
overlattice of $\ZZ \left\langle 2\gamma_{d} \right\rangle \oplus K$
generated by  $\ZZ \left\langle 2\gamma_{d} \right\rangle \oplus K$
and $\gamma_{d}+r_{d}$ where
\begin{itemize}
\item $r_{d}\in K^{\vee}-K$, $2r_{d}\in K$,
\item $r^{2}_{d}=d \!\!\! \mod 2$. 
\end{itemize}
The class $[r_{d}]\in \Lambda^{\vee}/\Lambda \cong
\operatorname{Maps}(\FF_{2}^{4},\FF_{2})$ only depends on $d \!\!\! \mod 2$
and is unique up to isometries of $K$. For $d$ even, the corresponding
map $\FF_{2}^{4}\to \FF_{2}$ is the characteristic function of a fixed
linear 2 plane $P_{1}\subset \FF_{2}^{4}$. For $d$ odd, the
corresponding map is the characteristic function of $P_{1}\Delta P_{2}
\subset \FF_{2}^{4}$ where $P_{1}$ and $P_{2}$ are transversely
intersecting 2 planes, and $\Delta$ denotes symmetric difference.
\end{proposition}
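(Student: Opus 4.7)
The plan is to deduce this proposition from Theorem 2.7 of Garbagnati-Sarti, so the main work is the translation of their statement into our notation. The key point to check is that our distinguished class $\gamma_d = c_1(FM(\alpha_d))$ matches, up to integral corrections in the exceptional lattice, the divisor class to which Garbagnati-Sarti's classification applies. I would begin by unwinding the Fourier-Mukai transform $FM: N_{\leq 1}([S/\inv]) \to N_{\leq 1}(\Sresolve)$. Under the derived McKay equivalence, the class $\alpha_d = \half t_*(\operatorname{ch}^{-1}(\beta_d))$ should transport to a class whose divisor part is the proper transform of the image of $\beta_d$ in $S/\inv$, modulo half-integer contributions from the exceptional divisors $E_i$ that meet this image. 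The prefactor $\half$ accounts for the degree of $t: S \to [S/\inv]$ at the generic point and is precisely what forces $2\gamma_d$ (but not necessarily $\gamma_d$) to be an integral class in $\Pic(\Sresolve)$.

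The substantive content of Garbagnati-Sarti's theorem is the determination of the exact sublattice of $\Pic(\Sresolve)$ generated by $2\gamma_d$, the exceptional classes $E_i$, and $\gamma_d$ itself. They show that $\ZZ\langle 2\gamma_d\rangle \oplus K$ embeds as an index-$2$ sublattice of $\Pic(\Sresolve)$, and that the quotient is generated by $\gamma_d + r_d$ for some $r_d \in K^\vee \setminus K$ with $2r_d \in K$. The vector $r_d$ arises as follows. The fixed locus $S[2] \cong \FF_2^4$ of $\inv: S \to S$ decomposes $\beta_d$ into a part passing through a specific subset of the sixteen $2$-torsion points; this subset is constrained to be an ``even set'' whose combinatorial type depends only on $d \Mod{2}$. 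For $d$ even the subset is (up to isometries of $K$) a linear $2$-plane $P_1 \subset \FF_2^4$, while for $d$ odd it is the symmetric difference $P_1 \Delta P_2$ of two transverse $2$-planes, and $r_d$ is half the sum of the exceptional divisors indexed by this subset. Under the identification $\Lambda^\vee/\Lambda \cong \operatorname{Maps}(\FF_2^4, \FF_2)$ of Lemma \ref{lem: description of K}, the class $[r_d]$ is then the characteristic function of the set in question. The congruence $r_d^2 \equiv d \Mod{2}$ is a short direct computation using $E_i^2 = -2$, $|P_1| = 4$, and $|P_1 \Delta P_2| = 6$.

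The main obstacle will be a careful matching of conventions: one must confirm that our Fourier-Mukai normalization (in particular the factor of $\half$ in $\alpha_d$ and the sign conventions built into the McKay correspondence from \cite{Bryan-Cadman-Young, Beentjes-Calabrese-Rennemo}) produces the same primitive divisor class on $\Sresolve$ that Garbagnati-Sarti take as their starting point, so that the residue class $[r_d] \in K^\vee/K$ attached to our $\gamma_d$ agrees with theirs rather than differing by an isometry of $K$ that would change the description of the corresponding map $\FF_2^4 \to \FF_2$. Once this translation is in place, Proposition \ref{prop: Pic of Kummer} is a direct rewriting of their theorem in the notation of this paper.
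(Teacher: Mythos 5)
Your proposal takes the same route as the paper, which proves this proposition simply by quoting Garbagnati--Sarti \cite[Theorem~2.7]{Garbagnati-Sarti-Kummer-surfaces-2016} "adapted to our notation." Your additional discussion of the Fourier--Mukai normalization of $\gamma_{d}$ and the parity check $r_{d}^{2}\equiv d \Mod{2}$ (via $|P_{1}|=4$, $|P_{1}\Delta P_{2}|=6$, $E_{i}^{2}=-2$) correctly fleshes out what that adaptation entails, so the approaches are essentially identical.
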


There are two families of Nikulin $K3$ surfaces determined as
follows. Let $S$ be a Nikulin surface and recall that $\beta_{d}\in
\Pic (S)$ is a primitive $\inv$-invariant effective class with
$\beta_{d}^{2}=2d$. The existence of the Nikulin involution implies
there is an inclusion
\[
\ZZ \left\langle \beta_{d} \right\rangle\oplus E_{8}(-2)\subset \Pic (S).
\]
The above is either (I) an isomorphism, or (II) an index 2
sublattice\footnote{This statement must be modified if the
invariant Picard rank of $S$ is greater than 1. c.f. Remark~\ref{rem:
A Picard rank bigger than 1} .}.

\begin{definition}\label{defn: Type I and Type II}
We say that $S$ is Nikulin of Type
(I) in the first case and of Type (II) in the second case. The latter
can occur only when $d$ is even.
\end{definition}

\begin{proposition}\label{prop: Picard lattice of a Nikulin resolution}
The Picard lattice of a Type (II) Nikulin resolution $\Sresolve$ is $\ZZ
\left\langle \gamma_{d}  \right\rangle \oplus N$. The Picard lattice
of a Type (I) Nikulin resolution $\Sresolve$ is the index 2 over
lattice of  $\ZZ
\left\langle 2\gamma_{d}\right\rangle  \oplus N $ generated by  $\ZZ
\left\langle 2\gamma_{d}\right\rangle  \oplus N $ and $\gamma_{d}+r_{d}
$ where
\begin{itemize}
\item $r_{d}\in N^{\vee}-N$, $2r_{d}\in N$,
\item $r_{d}^{2}=d \!\!\! \mod 2$. 
\end{itemize}
The class $[r_{d}]\in \Lambda^{\vee}/\Lambda$ only depends on $d \!\!\! \mod 2$
and is unique up to isometries of $N$ and is given by
\[
r_{d}=\begin{cases}
\half (E_{1}+E_{2}) & \text{if $d$ is odd}\\
\half (E_{1}+E_{2}+E_{3}+E_{4}) & \text{if $d$ is even}\\
\end{cases}
\]
for a suitable numbering of the exceptional divisors $E_{1},\dotsc ,E_{8}$.
\end{proposition}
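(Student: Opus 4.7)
The plan is to adapt the Garbagnati-Sarti analysis used for the Kummer case (which yields Proposition~\ref{prop: Pic of Kummer}) to Nikulin $K3$ surfaces, drawing on their study of the Nikulin lattice in \cite{Garbagnati-Sarti-even-set-2008}. The geometric setup is as follows: blowing up $S$ at the $8$ fixed points of $\inv$ yields a smooth surface $\tilde S$, on which $\inv$ extends with fixed locus equal to the exceptional divisors $E'_{1},\dotsc ,E'_{8}$. The quotient $\tilde S/\inv$ is smooth and canonically identified with $\Sresolve$, giving a double cover $q:\tilde S \to \Sresolve$ branched exactly along $E_{1}+\dotsb +E_{8} = 2\Ehat$, where $\Ehat = \half \sum E_{i} \in \Pic(\Sresolve)$ is the class whose very existence encodes the Nikulin lattice $N$. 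The embedding $q^{*}:\Pic(\Sresolve) \hookrightarrow \Pic(\tilde S)^{\inv}$ together with $q_{*}q^{*}=2$ and $q^{*}E_{i}=2E'_{i}$ will control all subsequent intersection computations.

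First I would identify $\gamma_{d}$ inside $\Pic(\Sresolve)\otimes \QQ$. By the definition $\alpha_{d} = \half t_{*}(\operatorname{ch}^{-1}(\beta_{d}))$ and compatibility of the derived McKay correspondence with Chern classes, $\gamma_{d} = c_{1}(FM(\alpha_{d}))$ is represented by $\half q_{*}(f^{*}\beta_{d})$ where $f:\tilde S\to S$ is the blowup. A direct computation using $\beta_{d}^{2}=2d$ gives $\gamma_{d}^{2}=d$ and $\gamma_{d}\cdot E_{i}=0$, placing $\gamma_{d}$ in the orthogonal complement of $N$ inside $\Pic(\Sresolve)\otimes \QQ$. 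In Type (II), the extra class $\eta \in \Pic(S)$ satisfying $2\eta \in \ZZ\langle \beta_{d}\rangle \oplus E_{8}(-2)$ is $\inv$-invariant; pulling $\eta$ up to $\tilde S$ and pushing it down via $q$ shows that $\gamma_{d}$ is already integral on $\Sresolve$ and orthogonal to $N$, yielding the direct sum decomposition $\Pic(\Sresolve) = \ZZ\langle \gamma_{d}\rangle \oplus N$. The requirement that $d$ be even in Type (II) follows from a parity check on $(2\eta)^{2}$ using that $E_{8}(-2)$ has squares divisible by $4$.

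In Type (I) no such class $\eta$ is available, so $\gamma_{d}$ is only integral modulo a correction in $N^{\vee}/N$. One checks that $\ZZ\langle 2\gamma_{d}\rangle \oplus N \subset \Pic(\Sresolve)$ is an index $2$ sublattice, with quotient generated by $\gamma_{d} + r_{d}$ for a unique $r_{d}\in N^{\vee}-N$ satisfying $2r_{d}\in N$ and $\gamma_{d}\cdot r_{d}=0$. The relation $r_{d}^{2}\equiv d \pmod 2$ comes from expanding $(\gamma_{d}+r_{d})^{2}\in 2\ZZ$ together with $\gamma_{d}^{2}=d$. To pin down $[r_{d}]\in N^{\vee}/N$, one computes the $\inv$-equivariant Chern character of $\beta_{d}$ at each of the $8$ fixed points; a parity count shows that exactly two (resp.\ four) of these contributions are half-integral when $d$ is odd (resp.\ even), producing the stated representatives $\half(E_{1}+E_{2})$ and $\half(E_{1}+\dotsb +E_{4})$. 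Uniqueness up to isometries of $N$ follows from the transitive action of $O(N)$ on vectors of given square in the discriminant group $N^{\vee}/N\cong (\ZZ/2)^{6}$.

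The main obstacle is the Type (I) case: precisely identifying $[r_{d}]\in N^{\vee}/N$ requires matching the local equivariant $K$-theoretic contribution of $\beta_{d}$ at each fixed point to a specific isotropic vector in the discriminant form of $N$, and confirming that the resulting class depends only on $d \bmod 2$. This parity dichotomy, two versus four half-integral contributions, should be traceable either by a direct local analysis in coordinates around each fixed point or by pulling back the analogous computation already performed by Garbagnati-Sarti in the Kummer case along an embedding of Nikulin geometry into the Kummer one.
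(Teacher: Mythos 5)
The paper does not actually prove this proposition from scratch: its entire proof is a citation to Proposition~2.1 and Corollary~2.2 of Garbagnati--Sarti \cite{Garbagnati-Sarti-even-set-2008}. Your proposal instead attempts a direct reconstruction via the double cover $q:\tilde S\to \Sresolve$, and the skeleton is the right one (it is essentially the geometry underlying the cited results). The routine parts check out: $\gamma_{d}=\half q_{*}f^{*}\beta_{d}$ modulo exceptional classes, the computation $\gamma_{d}^{2}=\tfrac14\cdot 2(f^{*}\beta_{d})^{2}=d$ and $\gamma_{d}\cdot E_{i}=0$, the derivation of $r_{d}^{2}\equiv d\Mod{2}$ from evenness of $(\gamma_{d}+r_{d})^{2}$, and the parity constraint forcing $d$ even in Type (II).

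However, the two statements that carry all the content are asserted rather than proved. First, ``one checks that $\ZZ\langle 2\gamma_{d}\rangle\oplus N\subset\Pic(\Sresolve)$ is an index $2$ sublattice'' is precisely the theorem; nothing in your sketch explains why the index is $2$ rather than $1$ or $4$. This requires controlling the image of $q^{*}:\Pic(\Sresolve)\hookrightarrow\Pic(\tilde S)$ (integrality of $q^{*}(\gamma_{d}+v)=f^{*}\beta_{d}+\sum 2v_{i}E_{i}'$ is necessary but far from sufficient for descent) and ultimately a discriminant-form/overlattice argument in the style of Nikulin, which is what Garbagnati--Sarti carry out. Second, the proposed mechanism for identifying $[r_{d}]$ --- counting ``half-integral contributions'' of an equivariant Chern character of $\beta_{d}$ at the $8$ fixed points --- is not a real argument: $r_{d}$ is gluing data for the overlattice, i.e.\ it records an isomorphism between subgroups of the discriminant groups of $\ZZ\langle 2\gamma_{d}\rangle$ and $N$, and there is no evident local-to-global principle that reads it off from fixed-point data of $\beta_{d}$. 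Likewise, transitivity of $O(N)$ on classes of fixed square in $N^{\vee}/N\cong(\ZZ/2)^{6}$ is quoted without justification. So as written the proposal establishes the Type (II) splitting and the numerical consequences of the Type (I) statement, but not the Type (I) statement itself; to close the gap you would either need to reproduce the lattice-theoretic analysis of \cite{Garbagnati-Sarti-even-set-2008}, or simply cite it as the paper does.
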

\proof See Proposition~2.1 and Corollary~2.2 of
\cite{Garbagnati-Sarti-even-set-2008}.

Propositions~\ref{prop: Pic of Kummer} and \ref{prop: Picard lattice
of a Nikulin resolution} then prove Proposition~\ref{prop: the lattice Gammamd}.

\subsection{Theta function identities}\label{subsec: Theta function identities}

To finish the proof of Theorem~\ref{thm: main statement about nPT for
Abelian/Nikulin surfaces}, we must prove the formulas given in
Equation~\eqref{eqn: formulas for ThetaT}. Recall that
\[
\Lambda \subset L\subset L^{\vee}\subset \Lambda ^{\vee }=\half \Lambda 
\]
where $\Lambda =\oplus_{i}\ZZ \left\langle E_{i} \right\rangle$ is the
exceptional lattice and $L$ is either $K$ or $N$. Since any element
$\rho \in \Lambda^{\vee}/\Lambda$ may be uniquely written as
\[
\rho =\half\sum_{i} \rho_{i}E_{i}, \quad \rho_{i}\in \{0,1 \}
\]
we may define
\[
c_{1}(\rho ) = \sum_{i} \rho_{i}, \quad c_{0}(\rho ) = \operatorname{rk}(\Lambda )-c_{1}(\rho ),
\]
i.e. the number of $\rho_{i}$'s which are 1 or 0 respectively. The
following lemma is our basic tool for computing theta
functions\footnote{The authors are very grateful to John Duncan who
explained this to us.}
\begin{lemma}\label{lem: coset method for computing ThetaT}
Let $\pi$ be the projection $\Lambda^{\vee}\to \Lambda^{\vee}/\Lambda$
and suppose that $T\subset \Lambda^{\vee}$ is a union of cosets:
$T=\cup_{\rho \in \pi (T)}(\Lambda +\rho )$. Then
\[
\Theta_{T}(q^{2},w) = \sum_{\rho \in \pi (T)} \theta_{0}^{c_{0}(\rho
)} \theta_{1}^{c_{1}(\rho )}
\]
where
\[
\theta_{i} =\theta_{i}(q^{2},w)= \sum_{k\in \ZZ +\frac{i}{2}} q^{2k^{2}}w^{k}. 
\]
\end{lemma}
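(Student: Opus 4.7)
The plan is to split the sum defining $\Theta_{T}(q^{2},w)$ along the coset decomposition of $T$, and then to factor the theta series of each individual coset into a product of one-variable theta series, one for each basis direction $E_{i}$.

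First I would observe that by the hypothesis $T=\bigsqcup_{\rho\in\pi(T)}(\Lambda+\rho)$, so
\[
\Theta_{T}(q^{2},w) \;=\; \sum_{\rho\in\pi(T)} \Theta_{\Lambda+\rho}(q^{2},w).
\]
It therefore suffices to show $\Theta_{\Lambda+\rho}(q^{2},w)=\theta_{0}^{c_{0}(\rho)}\theta_{1}^{c_{1}(\rho)}$ for each fixed coset representative $\rho=\tfrac{1}{2}\sum_{i}\rho_{i}E_{i}$ with $\rho_{i}\in\{0,1\}$.

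Next I would use that $\Lambda=\bigoplus_{i}\ZZ\langle E_{i}\rangle$ is orthogonal with $E_{i}^{2}=-2$, so an arbitrary element of $\Lambda+\rho$ can be written uniquely as $v=\sum_{i}v_{i}E_{i}$ with $v_{i}\in\ZZ+\tfrac{\rho_{i}}{2}$. Using the formulas $v^{2}=-2\sum_{i}v_{i}^{2}$ and $l(v)=\sum_{i}v_{i}$ from the paper, the summand of $\Theta_{\Lambda+\rho}(q^{2},w)=\sum_{v\in\Lambda+\rho}q^{-v^{2}}w^{l(v)}$ becomes $\prod_{i}q^{2v_{i}^{2}}w^{v_{i}}$, so the sum factors as
\[
\Theta_{\Lambda+\rho}(q^{2},w) \;=\; \prod_{i}\Bigl(\sum_{v_{i}\in\ZZ+\tfrac{\rho_{i}}{2}} q^{2v_{i}^{2}}w^{v_{i}}\Bigr).
\]
By the very definition of $\theta_{0}$ and $\theta_{1}$, the $i$-th factor equals $\theta_{0}$ when $\rho_{i}=0$ and $\theta_{1}$ when $\rho_{i}=1$. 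Collecting the factors by their $\rho_{i}$ value gives $\theta_{0}^{c_{0}(\rho)}\theta_{1}^{c_{1}(\rho)}$, which combined with the first step proves the lemma.

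There is essentially no obstacle here: the result is a formal consequence of the orthogonal decomposition of $\Lambda$ together with the definitions. The only thing to be careful about is the bookkeeping of the factor of two relating $q$ and $q^{2}$ (so that $q^{-v^{2}/2}$ at argument $q^{2}$ becomes $q^{-v^{2}}=\prod q^{2v_{i}^{2}}$), and the fact that a half-integer shift $\rho_{i}/2$ in the index of summation turns an integer theta into a half-integer one.
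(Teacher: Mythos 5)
Your proof is correct and follows the same route as the paper: decompose $T$ into disjoint cosets, then use the orthogonal splitting of $\Lambda+\rho$ into rank-one pieces (integer-indexed or half-integer-shifted according to $\rho_{i}$) to factor $\Theta_{\Lambda+\rho}$ as $\theta_{0}^{c_{0}(\rho)}\theta_{1}^{c_{1}(\rho)}$. You simply spell out the factorization and the $q\mapsto q^{2}$ bookkeeping more explicitly than the paper does.
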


\begin{proof}
Since the cosets $\Lambda +\rho$ are disjoint we have 
\[
\Theta_{T}(q^{2},w) = \sum_{\rho \in \pi (T)} \Theta_{\Lambda +\rho} (q^{2},w)
\]
and then we observe that 
\[
\Lambda +\rho \cong \ZZ \left\langle E \right\rangle^{\oplus
c_{0}(\rho )} \oplus \left(\ZZ \left\langle E \right\rangle
+\frac{E}{2} \right)^{\oplus c_{1}(\rho )} 
\]
from which it follows that $\Theta_{\Lambda
+\rho}=\theta_{0}^{c_{0}(\rho )}\theta_{1}^{c_{1}(\rho )}$. 
\end{proof}

\begin{proposition}\label{prop: formulas for ThetaN and ThetaNsh}
The theta functions of the Nikulin lattice $N$ and the shifted Nikulin
lattice $N_{\sh}$ are given by
\begin{align*}
\Theta_{N}(q^{2},w) &= \theta_{0}^{8}+\theta_{1}^{8}\\
\Theta_{N_{\sh}}(q^{2},w) &= \theta_{0}^{2}\theta_{1}^{6}+2
\theta_{0}^{4}\theta_{1}^{4}+
\theta_{0}^{6}\theta_{1}^{2}\\ 
\end{align*}
\end{proposition}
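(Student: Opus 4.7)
The plan is to apply Lemma~\ref{lem: coset method for computing ThetaT} directly, after identifying each of $N$ and $N_{\sh}$ explicitly as a union of cosets of the exceptional lattice $\Lambda = \oplus_{i=1}^{8} \ZZ\langle E_i \rangle$ inside $\Lambda^{\vee} = \tfrac{1}{2}\Lambda$.

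First I would handle $\Theta_{N}$. By Lemma~\ref{lem: description of N}, $N$ is the index-$2$ overlattice of $\Lambda$ generated by $\Lambda$ and $\Ehat = \tfrac{1}{2}\sum_{i=1}^{8} E_i$, so $\pi(N) = \{0,\Ehat\}$. The zero coset has $c_0 = 8$, $c_1 = 0$, contributing $\theta_0^8$, while $\Ehat$ has $c_0 = 0$, $c_1 = 8$, contributing $\theta_1^8$. The formula $\Theta_{N}(q^2,w) = \theta_0^8 + \theta_1^8$ follows immediately.

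Next I would handle $\Theta_{N_{\sh}}$. By definition $N_{\sh} = (N + r_0) \cup (N + r_1)$, and by Proposition~\ref{prop: Picard lattice of a Nikulin resolution} we may take
\[
r_0 = \tfrac{1}{2}(E_1+E_2+E_3+E_4), \qquad r_1 = \tfrac{1}{2}(E_1+E_2).
\]
Since each $N + r_i$ consists of two $\Lambda$-cosets, namely $r_i$ and $r_i + \Ehat$, we obtain four classes in $\Lambda^{\vee}/\Lambda$:
\[
\begin{aligned}
r_0 &= \tfrac{1}{2}(E_1+E_2+E_3+E_4), &\quad c_0=4,\ c_1=4,\\
r_0 + \Ehat &\equiv \tfrac{1}{2}(E_5+E_6+E_7+E_8), &\quad c_0=4,\ c_1=4,\\
r_1 &= \tfrac{1}{2}(E_1+E_2), &\quad c_0=6,\ c_1=2,\\
r_1 + \Ehat &\equiv \tfrac{1}{2}(E_3+E_4+E_5+E_6+E_7+E_8), &\quad c_0=2,\ c_1=6.
\end{aligned}
\]
Summing the four contributions via Lemma~\ref{lem: coset method for computing ThetaT} yields $\Theta_{N_{\sh}}(q^2,w) = 2\theta_0^4\theta_1^4 + \theta_0^6\theta_1^2 + \theta_0^2\theta_1^6$, as claimed.

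There is no real obstacle here beyond bookkeeping: the only subtlety is to verify that the two cosets $N + r_0$ and $N + r_1$ are genuinely distinct modulo $\Lambda$ (so the union is disjoint) and that $r_0, r_1$ may be chosen in the specific form above without loss of generality. Both points follow from the uniqueness-up-to-isometries-of-$N$ statement in Proposition~\ref{prop: Picard lattice of a Nikulin resolution}, together with the fact that $r_0 \not\equiv r_1 \pmod{N}$ since their squares differ mod $2$ (equivalently, $r_0^2 = 0 \!\!\!\mod 2$ while $r_1^2 = 1 \!\!\!\mod 2$).
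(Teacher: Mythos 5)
Your proof is correct and follows essentially the same route as the paper: both apply Lemma~\ref{lem: coset method for computing ThetaT} after identifying $\pi(N)=\{0,\Ehat\}$ via Lemma~\ref{lem: description of N} and the four cosets of $N_{\sh}$ via Proposition~\ref{prop: Picard lattice of a Nikulin resolution}, with the same $c_{1}$-values $2,6,4,4$. Your added remark on the disjointness of $N+r_{0}$ and $N+r_{1}$ (via the parity of $r_{i}^{2}$) is a small point the paper leaves implicit.
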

\begin{proof}
It follows from Lemma~\ref{lem: description of N} and
Proposition~\ref{prop: Picard lattice of a Nikulin resolution} that
\[
\pi (N) = \left\{0,\tinyhalf (E_{1}+\dotsb +E_{8}) \right \} 
\]
and that
\begin{align*}
\pi (N_{\sh}) &= \pi \left(N+\tinyhalf (E_{1}+E_{2})  \right) \cup  \pi \left(N+\tinyhalf (E_{1}+\dotsb +E_{4}) \right)\\
&= \left\{\tinyhalf (E_{1}+E_{2}),\tinyhalf (E_{3}+\dotsb +E_{8}),
\tinyhalf (E_{1}+\dotsb +E_{4}),\tinyhalf (E_{5}+\dotsb +E_{8})
\right\}.
\end{align*}
The value of $c_{1}$ on the above 4 elements is 2, 6, 4, and 4
respectively.  The proposition then follows from Lemma~\ref{lem: coset
method for computing ThetaT}.
\end{proof}

\begin{proposition}\label{prop: formulas for ThetaK and ThetaKsh}
The theta functions of the Kummer lattice $K$ and the shifted Kummer
lattice $K_{\sh}$ are given by
\begin{align*}
\Theta_{K}(q^{2},w)&= \theta_{0}^{16}+ 30 \theta_{0}^{8}\theta_{1}^{8}
+\theta_{1}^{16}  \\
\Theta_{K_{\sh}}(q^{2},w)&=  4 \theta_{0}^{4}\theta_{1}^{12}+ 16
\theta_{0}^{6}\theta_{1}^{10}+ 24 \theta_{0}^{8}\theta_{1}^{8}+ 16
\theta_{0}^{10}\theta_{1}^{6}+  4 \theta_{0}^{12}\theta_{1}^{4}
\end{align*}
\end{proposition}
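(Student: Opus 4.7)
The plan is to apply Lemma~\ref{lem: coset method for computing ThetaT}, which reduces both computations to enumerating the cosets $\pi(K)$ and $\pi(K_{\sh})$ inside $\Lambda^{\vee}/\Lambda \cong \operatorname{Maps}(\FF_{2}^{4},\FF_{2})$, and then tabulating the value $c_{1}(\rho)$ for each representative $\rho$.

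For $\Theta_{K}$: By Lemma~\ref{lem: description of K}, $\pi(K) = \operatorname{Aff}(\FF_{2}^{4},\FF_{2})$. The two constant maps contribute $c_{1}=0$ and $c_{1}=16$, while each of the $30 = 2(2^{4}-1)$ non-constant affine maps $\ell+c$ has $(\ell+c)^{-1}(1)$ equal to an affine hyperplane of size $8$, so $c_{1}=8$. The formula for $\Theta_{K}(q^{2},w)$ is immediate from Lemma~\ref{lem: coset method for computing ThetaT}.

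For $\Theta_{K_{\sh}}$: Write $K_{\sh} = (K+r_{0}) \sqcup (K+r_{1})$, so $\pi(K_{\sh}) = (\operatorname{Aff}+[r_{0}]) \sqcup (\operatorname{Aff}+[r_{1}])$. By Proposition~\ref{prop: Pic of Kummer}, the shifts are $[r_{0}] = \chi_{P_{1}}$ and $[r_{1}] = \chi_{P_{1}\Delta P_{2}}$ for a linear $2$-plane $P_{1}$ and a pair of transverse $2$-planes $P_{1},P_{2}\subset \FF_{2}^{4}$. For each affine $f$ with $A = f^{-1}(1)$, I compute $c_{1}(f+\chi_{S}) = |A\Delta S|$ by inclusion--exclusion, case-splitting according to whether $A$ is empty, all of $\FF_{2}^{4}$, a linear hyperplane through $0$, or an affine hyperplane not through $0$, and in the hyperplane cases further on whether the linearization $A_{0}$ contains $P_{1}$ and/or $P_{2}$. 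The resulting tally is
\begin{align*}
\pi(K+r_{0})&:\ 4 \text{ elements with } c_{1}=4,\ 24 \text{ with } c_{1}=8,\ 4 \text{ with } c_{1}=12,\\
\pi(K+r_{1})&:\ 16 \text{ elements with } c_{1}=6,\ 16 \text{ with } c_{1}=10,
\end{align*}
and substituting into Lemma~\ref{lem: coset method for computing ThetaT} gives the claimed expression for $\Theta_{K_{\sh}}(q^{2},w)$.

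The main work is the case analysis for $K_{\sh}$, though no single case is hard. The key combinatorial inputs are that exactly $3$ of the $15$ linear hyperplanes of $\FF_{2}^{4}$ contain a given $2$-plane $P_{i}$ (since the annihilator of $P_{i}$ in the dual is $2$-dimensional, with $3$ non-zero elements), and that by transversality $P_{1}+P_{2} = \FF_{2}^{4}$, so no linear hyperplane contains both $P_{1}$ and $P_{2}$; these two facts drive all the counts above.
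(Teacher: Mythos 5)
Your proposal is correct and follows essentially the same route as the paper: both reduce the computation to Lemma~\ref{lem: coset method for computing ThetaT} and then tabulate $c_{1}$ on the cosets in $\pi(K)$ and $\pi(K_{\sh})$, arriving at the identical tallies (4/24/4 for $K+r_{0}$ and 16/16 for $K+r_{1}$). The only difference is that for $K_{\sh}$ you carry out the coordinate-free symmetric-difference analysis of affine hyperplanes against $P_{1}$ and $P_{1}\Delta P_{2}$, which the paper explicitly mentions as an equivalent alternative to its direct enumeration of all 64 elements in coordinates.
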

\begin{proof}
As in the Nikulin case, we must determine the value of $c_{1}$ on all
the elements of $\pi (K)$ and $\pi (K_{\sh})$. By Lemma~\ref{lem:
description of K}, $\pi (K)$ is given by the 32 elements $\half
\sum_{i}\rho_{i}E_{i}$ where the corresponding map $\FF_{2}^{4}\to
\FF_{2}$ given by $i\mapsto \rho_{i}$ is an affine linear
function. The value of $c_{1}$ on the two constant functions is 0 and
16 respectively, while the value of $c_{1}$ on the remaining 30
non-constant affine linear functions is 8. The formula for
$\Theta_{K}$ then follows from Lemma~\ref{lem: coset method for
computing ThetaT}. A simple but tedious way to determine the elements
of $\pi (K_{\sh}) = \pi (K+r_{0})\cup \pi (K+r_{1})$ is to choose
coordinates for $\FF_{2}^{4}$ and write down all 64 elements
explicitly. Doing so and reading the off the value of $c_{1}$ on each
element we find that $\pi (K+r_{0})$ has 4 elements with $c_{1}=4$, 24
elements with $c_{1}=8$, and 4 elements with $c_{1}=12$ and that $\pi
(K+r_{1})$ has 16 elements with $c_{1}=6$ and 16 elements with
$c_{1}=10$. The formula for $\Theta_{K_{\sh}}$ then follows from
Lemma~\ref{lem: coset method for computing ThetaT}. There is a more
coordinate free approach to the same calculation using the affine
geometry of $\FF_{2}^{4}$. It requires analyzing the various symmetric
differences of the affine hyperplanes and the two dimensional planes
$P_{1}$ and $P_{2}$ appearing in Proposition~\ref{prop: Pic of
Kummer}. Unfortunately, the case by case analysis in this approach is
not particularly less tedious than the direct enumeration.
\end{proof}
    
To complete the proof of Theorem~\ref{thm: main statement about nPT for
Abelian/Nikulin surfaces}, it only remains to prove the following
identities 
\begin{align*}
\Theta_{N_{\sh}}(q^{2},w)& = -\frac{\Delta (q^{2})^{\half}}{\Delta (q)}
\cdot \phi_{10,1}(q,-w) \\
&= \psi_{w}\cdot q\cdot \prod_{n=1}^{\infty}
(1+q^{n})^{12}(1-q^{n})^{8}(1+wq^{n})^{2} (1+w^{-1}q^{n})^{2}
\end{align*}
and
\begin{align*}
\Theta_{K_{\sh}}(q^{2},w)& = 4\frac{\Delta (q^{2})}{\Delta (q)^{2}}
\cdot \phi^{2}_{10,1}(q,-w) \\
&= 4\psi^{2}_{w}\cdot q^{2}\cdot \prod_{n=1}^{\infty}
(1+q^{n})^{24}(1-q^{n})^{16}(1+wq^{n})^{4} (1+w^{-1}q^{n})^{4}.
\end{align*}

Since the equations for $\Theta_{N_{\sh}}(q^{2},w)$ and
$\Theta_{K_{\sh}}(q^{2},w)$ given in Propositions~\ref{prop: formulas
for ThetaN and ThetaNsh} and \ref{prop: formulas for ThetaK and
ThetaKsh} can be factored as
\begin{align*}
\Theta_{N_{\sh}}(q^{2},w) &=
\theta_{0}^{2}\theta_{1}^{2}(\theta_{0}^{2}+\theta_{1}^{2})^{2} \\
\Theta_{K_{\sh}}(q^{2},w) &=
4\theta_{0}^{4}\theta_{1}^{4}(\theta_{0}^{2}+\theta_{1}^{2})^{4} 
\end{align*}
we see that it suffices to prove the identity:
\begin{equation}\label{eqn: product formula identity for Ksh}
\theta_{0}^{2}\theta_{1}^{2}(\theta_{0}^{2}+\theta_{1}^{2})^{2}
=\psi_{w}\cdot q\cdot \prod_{n=1}^{\infty} (1+q^{n})^{12}(1-q^{n})^{8}
(1+wq^{n})^{2} (1+w^{-1}q^{n})^{2}.
\end{equation}

By the Jacobi triple product identity, we may write
\begin{align}\label{eqn: Jacobi triple product for theta0 and theta1}
\theta_{0}(q^{2},w)&= \prod_{n=1}^{\infty}
(1-q^{4n})(1+wq^{4n-2})(1+w^{-1}q^{4n-2}) \\
\theta_{1}(q^{2},w)&=q^{\half}\cdot (w^{\half}+w^{-\half})\cdot  \prod_{n=1}^{\infty}
(1-q^{4n})(1+wq^{4n})(1+w^{-1}q^{4n}) .
\end{align}

We also have the following
\begin{lemma}\label{lem: quadratic theta function identity}
\[
\theta_{0}(q^{2},w)^{2}+\theta_{1}(q^{2},w)^{2} = \prod_{n=1}^{\infty}
(1-q^{2n})^{2} (1+q^{2n-1})^{2}(1+wq^{2n-1})(1+w^{-1}q^{2n-1}). 
\]
\end{lemma}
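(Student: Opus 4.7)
The plan is to prove the identity by a change of summation variables that decouples the double sum, followed by two applications of the Jacobi triple product.

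First I would expand the left-hand side as a double sum. By definition,
\[
\theta_0(q^2,w)^2+\theta_1(q^2,w)^2=\sum_{\substack{k_1,k_2\in\ZZ}}q^{2(k_1^2+k_2^2)}w^{k_1+k_2}+\sum_{\substack{k_1,k_2\in\ZZ+\frac12}}q^{2(k_1^2+k_2^2)}w^{k_1+k_2}.
\]
The key observation is that in each sum $(k_1,k_2)$ ranges over pairs where the two entries have the same half-integrality type, so setting $a=k_1+k_2$ and $b=k_1-k_2$ gives a bijection between the union of these index sets and all of $\ZZ^2$: the first sum corresponds to $a+b$ even and the second to $a+b$ odd. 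Using $2(k_1^2+k_2^2)=a^2+b^2$ and $k_1+k_2=a$, the two sums combine into
\[
\sum_{a,b\in\ZZ}q^{a^2+b^2}w^{a}=\Bigl(\sum_{a\in\ZZ}q^{a^2}w^{a}\Bigr)\Bigl(\sum_{b\in\ZZ}q^{b^2}\Bigr).
\]

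To finish, I would apply the Jacobi triple product identity
\[
\sum_{n\in\ZZ}z^{n}q^{n^2}=\prod_{m=1}^{\infty}(1-q^{2m})(1+zq^{2m-1})(1+z^{-1}q^{2m-1})
\]
to the first factor with $z=w$, and to the second factor with $z=1$, which yields
\[
\sum_{a\in\ZZ}q^{a^2}w^a=\prod_{m\geq1}(1-q^{2m})(1+wq^{2m-1})(1+w^{-1}q^{2m-1})
\]
and
\[
\sum_{b\in\ZZ}q^{b^2}=\prod_{m\geq1}(1-q^{2m})(1+q^{2m-1})^{2}.
\]
Multiplying these two product expansions gives exactly the right-hand side of the desired identity.

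There is no real obstacle here: the only substantive step is recognizing the orthogonal change of variable $(k_1,k_2)\mapsto(k_1+k_2,k_1-k_2)$, which is the well-known gluing trick for the $D_2=A_1\oplus A_1$ lattice and exactly explains why summing over the two cosets $\ZZ^{2}$ and $(\ZZ+\tfrac12)^{2}$ produces a product of two rank-one theta series. Once that identification is in place, the Jacobi triple product delivers the claimed infinite product.
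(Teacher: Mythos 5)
Your proof is correct and is essentially the paper's own argument: both use the change of variables $(k_1,k_2)\mapsto(k_1+k_2,\,k_1-k_2)$ to merge the sums over $\ZZ^2$ and $(\ZZ+\tfrac12)^2$ into a single sum over $\ZZ^2$, identify the result as $\theta_0(q,w)\theta_0(q,1)$, and finish with the Jacobi triple product. The only cosmetic difference is that you package the two cosets as a parity bijection on $(a,b)$, while the paper absorbs the half-integer coset by a shift $a\mapsto a+1$.
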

\begin{proof}
The left hand side of the above equation is given by
\[
\sum_{n,m\in \ZZ} q^{2m^{2}+2n^{2}} w^{n+m} +q^{2\left(m+\tinyhalf
\right)^{2}+2\left(n+\tinyhalf  \right)^{2}}w^{n+m+1}. 
\]
Letting $n=\tinyhalf (a-b)$ and $m=\tinyhalf (a+b)$ the sum rearranges
to
\[
\sum_{\substack{a,b\in \ZZ\\a\equiv b\bmod 2 }}
q^{b^{2}}\left(q^{a^{2}}w^{a}+q^{(a+1)^2}w^{a+1} \right) =
\sum_{a,b\in \ZZ} q^{b^{2}}q^{a^{2}}w^{a} =
\theta_{0}(q,1)\theta_{0}(q,w). 
\]
Then applying the Jacobi triple product identity to the right hand side of
the above proves the lemma.
\end{proof}

Now applying Lemma~\ref{lem: quadratic theta function identity} and
Equations~\eqref{eqn: Jacobi triple product for theta0 and theta1} to
the left hand side of Equation~\eqref{eqn: product formula identity
for Ksh}, we get 
\begin{align*}
\theta_{0}^{2}\theta_{1}^{2} (\theta_{0}^{2}+\theta_{1}^{2})^{2} =&
q\cdot  \psi_{w}\cdot \prod_{n=1}^{\infty} (1-q^{4n})^{2}(1+wq^{4n})^{2}(1+w^{-1}q^{4n})^{2}\\
&\quad \quad \quad \cdot (1-q^{4n})^{2}(1+wq^{4n-2})^{2}(1+w^{-1}q^{4n-2})^{2}\\
&\quad \quad \quad \cdot (1-q^{2n})^{4}(1+q^{2n-1})^{4}(1+wq^{2n-1})^{2}(1+w^{-1}q^{2n-1})^{2}\\
=& q\cdot\psi_{w}\cdot \prod_{n=1}^{\infty} (1+q^{n})^{12}(1-q^{n})^{8}
(1+wq^{n})^{2} (1+w^{-1}q^{n})^{2}
\end{align*}
where in the last equality we have used the fact that
\[
\prod_{n=1}^{\infty}(1+wq^{4n})(1+wq^{4n-2})(1+wq^{2n-1}) =
\prod_{n=1}^{\infty}(1+wq^{n})
\]
and similar considerations.

This completes the proof of Theorem~\ref{thm: main statement about nPT for
Abelian/Nikulin surfaces}. 

\section{Local Abelian and Nikulin Surfaces (MT theory)}
\label{sec: Local Abelian and Nikulin Surfaces (MT theory)}

\subsection{Overview.} In this section we prove some basic results
about $\inv$-stability and we prove Conjecture~\ref{conj: two tau-GV
defns are the same nPT=nMT} for local Abelian and Nikulin surfaces.

In Subsection~\ref{subsec: Nironi stability is equivalent to
i-stability} we show $\inv$-stability is equivalent to a certain kind
of Nironi stability on the stack quotient $[X/\inv ]$. In
Subsection~\ref{subsec: MT invariants for local Abeliand and Nikulin
surfaces} we prove Conjecture~\ref{conj: two tau-GV defns are the same
nPT=nMT} for $X=S\times \CC$ where $S$ is an Abelian or Nikulin
surface. The basic idea is the following. Using the results of
Subsection~\ref{subsec: Nironi stability is equivalent to i-stability}
and the projection $X\to \CC$ we show that all $\inv$-stable MT
sheaves on $X$ are given by Nironi $\delta$-stable sheaves on
$[S/\inv]\times \{t \}$ for some $t\in \CC$. We then show that Nironi
$\delta$-stability on $[S/\inv ]$ is the large volume limit of a
certain Bridgeland stability condition on $[S/\inv] $ constructed by
Lim and Rota \cite{Lim-Rota}. We then apply the derived Fourier-Mukai
correspondence to show that our moduli spaces are given by, up to a
factor of $\CC$, moduli spaces of objects in the derived category of
$\Sresolve$ which are stable with respect to the large volume limit of
one of the stability conditions on $K3$ surfaces constructed by
Bridgeland \cite{Bridgeland-K3-stability}. Finally, we use the results
of Bayer and Macri \cite{Bayer-Macri} to show these moduli spaces are
deformation equvalent to moduli spaces of MT sheaves on
$\Sresolve$. This then allows us to apply Conjecture~\ref{conj: nPTbg
= nDTbg}, which is known to hold for $\Sresolve \times \CC$ by
\cite{Pandharipande-Thomas-KKV,Shen-Yin-hyperkahler}. The upshot is
that we prove that Equation~\eqref{eqn: nPTgh on X in terms of nPT on
Y} holds with MT GV invariants replacing PT GV invariants on both
sides and then the subsequent arguments of Section~\ref{sec: Local
Abelian/Nikulin Surfaces (PT theory)} apply word for word.

\subsection{Nironi Stability}\label{subsec: Nironi stability is
equivalent to i-stability}

The category of $\inv$-equivariant sheaves on $X$ and the category of
sheaves on the stack $[X/\inv ]$ are canonically equivalent and in
this section we will not notationally differentiate between a sheaf on
the stack and the corresponding $\inv$-equivariant sheaf.

In \cite{Nironi-stability-on-stacks}, Nironi developed a theory of
slope stability for Deligne-Mumford stacks analougous to Simpson
stability for schemes. Nironi stability for the stack $[X/\inv ]$
involves a choice of an ample divisor $H$ on the coarse space $X/\inv$
and the choice of a ``generating bundle'' $V$ which we may take to be
(see \cite[Def.~2.2, Prop.~2.7]{Nironi-stability-on-stacks}) 
\[
V=\left(\OO_{X}\otimes R_{+} \right)^{\oplus a} \oplus
\left(\OO_{X}\otimes R_{-} \right)^{\oplus b}
\]
for any $a,b\in \NN$.

Nironi's slope function is obtained from the generalized Hilbert
polynomial of a sheaf $F$ (i.e. the $\inv$-invariant part of $\chi
(F\otimes V(mH))$) by dividing the second coefficient by the leading
coefficient. For 1-dimensional sheaves $F$ with $[\supp(F)]=\beta$,
$\chi (F) = nR_{\reg}+\epsilon R_{-}$, and our choice of $V$, Nironi's
slope function is given by
\[
\mu (F) = \frac{(a+b)n+b\epsilon}{(a+b)H\cdot \beta}. 
\]
The slope function only depends on $a$ and $b$ through the number
\[
\delta =\frac{b}{a+b}\in \QQ \cap (0,1)
\]
so we write
\[
\mu_{\delta}(F) = \frac{n+\delta \epsilon}{H\cdot \beta}. 
\]

\begin{definition}\label{defn: Nironi stability}
Let $F$ be an $\inv$-equivariant sheaf on $X$ with pure 1-dimensional
support, $[\supp(F)]=\beta$, and $\chi (F) = nR_{\reg}+\epsilon
R_{-}$. Then $F$ is \emph{Nironi $\delta$ (semi-)stable} if for all
$\inv$-equivariant subsheaves $F'\subsetneq F$, $\mu_{\delta}(F')<
\mu_{\delta}(F)$ (resp.  $\mu_{\delta}(F')\leq \mu_{\delta}(F)$).
\end{definition}

Let $\M^{\dstable}_{\beta ,n,\epsilon}([X/\inv ])$
(resp. $\M^{\dsemistable }_{\beta ,n,\epsilon}([X/\inv ])$) be the
moduli stack of Nironi $\delta$ (semi-)stable sheaves with $\beta
,n,\epsilon$ as above.  Nironi proves that the usual properties
enjoyed by moduli stacks of Simpson (semi-)stable sheaves hold for
moduli stacks of Nironi (semi-)stable sheaves \cite[Theorems 6.21,
6.22]{Nironi-stability-on-stacks}. In particular we have:

\begin{theorem}\label{thm: stack of Nironi stable sheaves is a C*
gerbe over its coarse space.}
The stack $\M^{\dstable}_{\beta ,n,\epsilon}([X/\inv ])$ is a
$\CC^{*}$-gerbe over its coarse moduli space. In particular, any
Nironi $\delta$ stable sheaf is simple. 
\end{theorem}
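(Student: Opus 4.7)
The plan is to reduce the statement directly to Nironi's general theorems \cite[Theorems 6.21, 6.22]{Nironi-stability-on-stacks}, which build the coarse moduli space of Nironi $\delta$-stable sheaves on a projective Deligne--Mumford stack and show that its moduli stack is a $\CC^{*}$-gerbe over it, in exact parallel with the classical Simpson theory. So the main task is to verify that our setting is covered by Nironi's framework, namely: $[X/\inv]$ is a tame separated Deligne--Mumford stack; the coarse space $X/\inv$ is quasi-projective and carries an ample divisor $H$; the sheaf $V = (\OO_X\otimes R_+)^{\oplus a}\oplus (\OO_X\otimes R_-)^{\oplus b}$ is a generating sheaf in the sense of Nironi; and the slope function $\mu_\delta$ defined above is the normalized second coefficient of the modified Hilbert polynomial (restricted to 1-dimensional sheaves). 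Each of these is either immediate from the construction or a direct check.

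Once Nironi's hypotheses are verified, the first conclusion is simply his theorem. The compactness/boundedness issue in our non-compact setting $X = S\times \CC$ will be handled as in the subsequent subsection: all $\inv$-stable MT sheaves will be shown to be scheme-theoretically supported on a slice $S\times\{t\}$, so the moduli problem is effectively over a projective stack and Nironi applies.

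For the second assertion, that any Nironi $\delta$-stable sheaf $F$ is simple, I would argue in the standard way: a nonzero equivariant endomorphism $\varphi\in \End(F)$ has an image and kernel which are equivariant subsheaves of $F$ with the same reduced support; if $\varphi$ is not an isomorphism, one of $\ker\varphi,\im\varphi$ is a proper nonzero equivariant subsheaf and the additivity of the slope function forces $\mu_\delta(\ker\varphi)\geq \mu_\delta(F)$ or $\mu_\delta(\im\varphi)\geq \mu_\delta(F)$, contradicting $\delta$-stability. Hence every nonzero endomorphism is an isomorphism, so $\End(F)$ is a finite-dimensional division algebra over $\CC$, i.e.\ $\CC$ itself.

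The main obstacle I anticipate is not any of these steps individually but rather the bookkeeping involved in matching our numerical invariants $(\beta,n,\epsilon)$ and the slope $\mu_\delta$ to the modified Hilbert polynomial used by Nironi, so that the notion of $\delta$-(semi)stability in Definition above is literally an instance of Nironi stability for our chosen $H$ and $V$. Once this translation is in place, both parts of the theorem are immediate consequences of Nironi's results together with the standard endomorphism argument.
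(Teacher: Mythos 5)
Your proposal is correct and follows essentially the same route as the paper: the theorem is obtained by citing Nironi's general results \cite[Theorems 6.21, 6.22]{Nironi-stability-on-stacks} after setting up the generating sheaf $V$ and identifying $\mu_{\delta}$ with the normalized modified Hilbert polynomial, which the paper does in the paragraphs immediately preceding the statement. Your additional Schur-lemma argument for simplicity is a standard unpacking of the ``$\CC^{*}$-gerbe'' conclusion that the paper leaves implicit.
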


\begin{theorem}
Assume $X$ is projective, then $\M^{\dsemistable}_{\beta
,n,\epsilon}([X/\inv ])$ has a projective coarse moduli space. 
\end{theorem}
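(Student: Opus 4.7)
The plan is to reduce the statement to a direct application of Nironi's general GIT construction of projective coarse moduli spaces for semistable sheaves on projective Deligne-Mumford stacks \cite[Thm.~6.22]{Nironi-stability-on-stacks}, once the setup is verified to fit Nironi's framework.

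First I would check that $[X/\inv]$ is a projective DM stack when $X$ is projective: the involution on the projective variety $X$ yields a finite group quotient, so the coarse moduli space $X/\inv$ is projective and we equip it with the ample polarization $H$. Then I would verify that the sheaf $V=(\OO_X\otimes R_+)^{\oplus a}\oplus (\OO_X\otimes R_-)^{\oplus b}$ is a generating sheaf on $[X/\inv]$ in Nironi's sense, which follows from \cite[Prop.~2.7]{Nironi-stability-on-stacks} applied to the group $\ZZ/2$: since both irreducible representations $R_+$ and $R_-$ appear as summands in the fiber of $V$ at every orbifold point, every coherent sheaf on $[X/\inv]$ is a quotient of a direct sum of twists of $V$ by $\OO(-mH)$.

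Next I would match up the notions of stability. By construction, Nironi's modified Hilbert polynomial of a pure 1-dimensional sheaf $F$ with $[\supp(F)]=\beta$ and $\chi(F)=nR_{\reg}+\epsilon R_-$, computed with respect to $(H,V)$, has leading coefficient $(a+b)(H\cdot\beta)$ and second coefficient $(a+b)n+b\epsilon$ (up to a common factor and lower-order corrections depending only on $\beta$). Consequently, the Nironi slope attached to $V$ equals the function $\mu_\delta$ of Definition~\ref{defn: Nironi stability} with $\delta=b/(a+b)$, and for each rational $\delta\in(0,1)$ one can choose integers $a,b$ realizing it. Hence Nironi $\delta$-(semi)stability in our sense coincides with Nironi $V$-(semi)stability with this choice of $V$, and the discrete data $(\beta,n,\epsilon)$ determine the modified Hilbert polynomial.

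Finally I would invoke \cite[Thm.~6.22]{Nironi-stability-on-stacks} directly: for a projective DM stack with chosen polarization and generating sheaf, the moduli stack of semistable sheaves with fixed modified Hilbert polynomial admits a projective coarse moduli space, constructed via GIT on a suitable Quot scheme on the coarse space twisted by $V$. Applied to $(H,V)$ on $[X/\inv]$ with polynomial data $(\beta,n,\epsilon)$, this yields exactly the desired projective coarse moduli space for $\M^{\dsemistable}_{\beta,n,\epsilon}([X/\inv])$. The only work beyond bookkeeping lies in confirming the match of slope functions and Hilbert polynomial data in the previous step; the projectivity then comes for free from Nironi's theorem, so there is no substantive obstacle.
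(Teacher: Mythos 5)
Your proposal is correct and follows essentially the same route as the paper, which likewise obtains this statement by verifying that the generating sheaf $V=(\OO_X\otimes R_+)^{\oplus a}\oplus(\OO_X\otimes R_-)^{\oplus b}$ and the polarization $H$ place the setup within Nironi's framework and then citing \cite[Theorem 6.22]{Nironi-stability-on-stacks}. The only difference is that you spell out the verification of the hypotheses in more detail than the paper, which states the theorem as a direct consequence of Nironi's results.
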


The following corollary is then standard.

\begin{corollary}\label{cor: Mnironi-->Chow is proper}
For $X$ quasi-projective, the Hilbert-Chow morphism 
\[
\M^{\dsemistable}_{\beta ,n,\epsilon}([X/\inv ])\to \Chow_{\beta}(X)^{\inv}
\]
given by $F \mapsto [\supp(F)]$ is proper. 
\end{corollary}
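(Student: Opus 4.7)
The plan is to deduce properness by reducing to the projective case, where the preceding theorem applies directly.

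First, I would choose an $\inv$-equivariant projective compactification $j : X \hookrightarrow \bar{X}$, i.e.\ a projective variety $\bar{X}$ containing $X$ as an open subscheme together with an extension of $\inv$ to an involution of $\bar{X}$. Such a compactification exists by standard equivariant compactification results for quasi-projective varieties with a finite group action. I then extend the ample divisor $H$ on $X/\inv$ to an ample divisor $\bar{H}$ on $\bar{X}/\inv$ and use the same generating bundle $V$ to define Nironi $\delta$-stability on $[\bar{X}/\inv]$. The previous theorem then gives a projective coarse moduli space for $\M^{\dsemistable}_{\beta,n,\epsilon}([\bar{X}/\inv])$.

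Next, the Hilbert-Chow morphism
\[
\bar{\pi}: \M^{\dsemistable}_{\beta,n,\epsilon}([\bar{X}/\inv]) \to \Chow_{\beta}(\bar{X})^{\inv}
\]
is proper because its source is projective and its target is a closed subscheme of the Chow variety of the compact $\bar{X}/\inv$. The invariant Chow variety $\Chow_{\beta}(X)^{\inv}$ sits as an open subscheme of $\Chow_{\beta}(\bar{X})^{\inv}$, namely those cycles whose support lies in $X$. The key observation is that $\bar{\pi}^{-1}(\Chow_{\beta}(X)^{\inv})$ is canonically identified with $\M^{\dsemistable}_{\beta,n,\epsilon}([X/\inv])$: a semistable sheaf $F$ on $[\bar{X}/\inv]$ with $[\supp(F)] \in \Chow_{\beta}(X)$ automatically has all its support contained in $X$, and hence corresponds via extension by zero to a sheaf on $[X/\inv]$. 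Moreover, every equivariant subsheaf $F' \subsetneq F$ also has support in $X$, and the slope function $\mu_{\delta}$ depends only on $\bar{H}\cdot \beta = H\cdot \beta$ and on $\chi(F')$, so Nironi $\delta$-semistability on $[\bar{X}/\inv]$ matches Nironi $\delta$-semistability on $[X/\inv]$ under this identification.

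Since properness is stable under base change along open immersions, the restriction of $\bar\pi$ to the preimage of $\Chow_{\beta}(X)^{\inv}$ yields the desired properness of the Hilbert-Chow morphism in Corollary~\ref{cor: Mnironi-->Chow is proper}. The one step requiring care is the existence of the $\inv$-equivariant projective compactification $\bar{X}$ of $X$; this is the main technical point, but it is a standard consequence of equivariant projective embedding theorems for finite group actions. Everything else in the argument is the routine fact that taking preimages of open subschemes preserves properness.
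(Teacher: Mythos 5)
Your argument is correct and is exactly the ``standard'' deduction the paper has in mind (the paper offers no proof beyond citing Nironi's projectivity theorem): equivariantly compactify, invoke projectivity of the coarse moduli space over $\bar X$, and base change along the open immersion $\Chow_{\beta}(X)^{\inv}\subset\Chow_{\beta}(\bar X)^{\inv}$, using purity of the sheaves to see that the preimage is the moduli space for $X$. The only point worth phrasing carefully is the extension of $H$: an ample divisor on $X/\inv$ need not extend to an arbitrary compactification, but one can simply take $\bar X$ to be the closure in the projective embedding determined by an equivariant multiple of $H$, so that $\bar H$ restricts to $H$ and the slope functions agree.
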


\begin{proposition}\label{prop: Nironi stability is equivalent to i-stability}
Let $F$ be an $\inv$-equivariant sheaf on $X$ having proper pure
1-dimensional support and with $[\supp(F)]=\beta$ and $\chi (F) =
R_{\reg}+\epsilon R_{-}$. Let $\delta>0$ be a sufficiently small
rational number. Then the following conditions are equivalent
\begin{enumerate}
\item $F$ is Nironi $\delta $ semistable.
\item $F$ is Nironi $\delta $ stable.
\item $F$ is $\inv$-stable (see Definition \ref{defn: tau-stable MT sheaves}).
\end{enumerate}
\end{proposition}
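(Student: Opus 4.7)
The plan is a direct slope comparison, case by case on the multiplicity $k$ of $R_{\reg}$ in $\chi(F')$. Because $\chi(F) = R_{\reg} + \epsilon R_{-}$ has coefficient $1$ on $R_{\reg}$, the Nironi slope of $F$ is
\[
\mu_\delta(F) = \frac{1 + \delta\epsilon}{H\cdot\beta}.
\]
For an $\inv$-equivariant subsheaf $F' \subsetneq F$ with $\chi(F') = kR_{\reg} + \gamma R_{-}$ and $[\supp(F')] = \beta'$, purity of $F$ forces $F'$ either to vanish or to be pure of dimension one, and $\beta' \leq \beta$ as effective $1$-cycles with equality iff $F/F'$ has zero-dimensional support. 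Clearing the positive denominators, $\mu_\delta(F') < \mu_\delta(F)$ becomes
\[
k(H\cdot\beta) - H\cdot\beta' \;<\; \delta\bigl(\epsilon\,H\cdot\beta' - \gamma\,H\cdot\beta\bigr).
\]

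If $k \geq 2$, the left side is at least $H\cdot\beta > 0$ while the right side can be made arbitrarily small, so the inequality fails; Nironi $\delta$-stability therefore forces $k \leq 1$, which is precisely the first $\inv$-stability constraint. If $k = 1$, the left side equals $H\cdot(\beta - \beta') \geq 0$, strictly positive unless $\beta' = \beta$ (since $H$ is ample), so for $\delta$ small we must have $\beta' = \beta$, and the inequality then reduces to $\gamma < \epsilon$ --- the remaining $\inv$-stability condition. If $k \leq 0$, the left side is at most $-H\cdot\beta' < 0$ while the right side tends to $0$, so the inequality holds automatically for $\delta$ small; this matches the fact that $\inv$-stability imposes no condition on such subsheaves. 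Together these cases establish $(2) \Leftrightarrow (3)$.

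For $(1) \Leftrightarrow (2)$, suppose $F$ is Nironi $\delta$-semistable but not stable, so some $F' \subsetneq F$ achieves $\mu_\delta(F') = \mu_\delta(F)$. Running the same case analysis with equality replacing strict inequality forces $k = 1$, $\beta' = \beta$, and $\gamma = \epsilon$; hence $\chi(F') = \chi(F)$ and $F/F'$ is supported in dimension zero. Then $\chi(F/F') = 0$ as a virtual $\langle\inv\rangle$-representation, which for a zero-dimensional sheaf forces $F/F' = 0$, contradicting $F' \subsetneq F$. The assumption that the coefficient of $R_{\reg}$ in $\chi(F)$ is exactly $1$ is decisive at this step: it is what prevents a nontrivial Jordan--H\"older filtration from existing.

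The main technical obstacle is choosing $\delta$ uniformly so that the above case analysis applies to every candidate destabilizer at once. This is handled by standard boundedness: for a fixed $F$, the subsheaves $F' \subset F$ have bounded numerical invariants $(\beta', k, \gamma)$ --- the support class $\beta'$ ranges over finitely many effective classes below $\beta$, and for each $\beta'$ the (generalized) Hilbert polynomial of $F'$ is controlled by that of $F$ via Nironi's boundedness results \cite{Nironi-stability-on-stacks} --- so a single sufficiently small rational $\delta > 0$ works for all $F'$ simultaneously.
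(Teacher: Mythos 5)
Your proof is correct and follows essentially the same route as the paper's: clear denominators in the Nironi slope inequality, run the case analysis on $k$ (with $k\geq 2$ excluded, $k=1$ forcing $\beta'=\beta$ and then $\gamma<\epsilon$, and $k\leq 0$ automatic), and eliminate the strictly semistable case by observing that a zero-dimensional quotient $Q$ with $\chi(Q)=0$ must vanish. Your closing paragraph on choosing $\delta$ uniformly addresses a point the paper leaves implicit, though the invariants $(k,\gamma)$ are not literally bounded over \emph{all} subsheaves (they drop without bound under twisting down), so the boundedness argument should be applied to saturated subsheaves, to which both stability conditions readily reduce.
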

\begin{proof}
Let $F'\subsetneq F$ be an $\inv$-equivariant subsheaf with $\chi
(F')=kR_{\reg}+\gamma R_{-}$ and $[\supp(F')]=\beta '$.

Suppose that $F$ is Nironi $\delta$ semistable. Then the inequality
\[
\frac{k+\delta \gamma}{H\cdot \beta '} \leq \frac{1+\delta \epsilon}{H\cdot \beta}
\]
holds and is equivalent to 
\begin{equation}\label{eqn: stability inequality}
k\leq \left(\frac{H\cdot \beta '}{H\cdot \beta} \right) (1+\delta
\epsilon ) - \delta \gamma .
\end{equation}
Since $\supp(F') \subseteq \supp(F)$ and $\dim \supp(F') \neq 0$, we have 
\[
0< \frac{H\cdot \beta '}{H\cdot \beta}\leq 1.
\]
If $ \frac{H\cdot \beta '}{H\cdot \beta}< 1$, then for sufficiently
small $\delta$ we have $k<1$. If $ \frac{H\cdot \beta '}{H\cdot
\beta}=1$, then $\beta '=\beta$ and $k\leq 1+\delta (\epsilon -\gamma
)$.  Thus we see that either $k<1$ or $k=1$ and $\beta '=\beta$ and
$\gamma \leq \epsilon $. Finally, if $k=1$, $\beta '=\beta$, and
$\gamma =\epsilon$, then $\chi (Q)=0$ where
\[
0\to F'\to F\to Q\to 0.
\]
But since $\beta '=\beta$, $\dim Q=0$ and so $\chi (Q)=0$ implies that
$Q=0$ which implies that $F'=F$. Thus we've shown that (1) implies
that either $k< 1$ or $k=1$ and $\beta '=\beta$ and $\gamma < \epsilon
$ which by Definition~\ref{defn: tau-stable MT sheaves} means that $F$
is $\inv$-stable. Thus (1) implies (3). Moreover, we've shown that
the inequality~\eqref{eqn: stability inequality} must be strict so
that (1) implies (2). And of course (2) implies (1) so it remains to
show that (3) implies (2).

To that end, suppose that $F$ is $\inv$-stable, i.e. $k\leq 1$ and if
$k=1$ then $\gamma <\epsilon$ and $\beta '=\beta$. We need to prove
that the inequality~\eqref{eqn: stability inequality} holds. Since
$\dim \supp(F') > 0$, $\frac{H\cdot \beta '}{H\cdot \beta}>0$ which
means the right hand side of \eqref{eqn: stability inequality} is
positive for sufficiently small $\delta$, and so if $k<1$, \eqref{eqn:
stability inequality} is true. If $k=1$, then by hypothesis, $\gamma
<\epsilon$ and $\beta '=\beta$ so \eqref{eqn: stability inequality}
becomes $1\leq 1+\delta \epsilon -\delta \gamma$ which is true. 
\end{proof}

\subsection{Proof of Conjecture~\ref{conj: two tau-GV defns are the
same nPT=nMT} for Local Abelian and Nikulin surfaces}\label{subsec: MT
invariants for local Abeliand and Nikulin surfaces}

We now consider $X=S\times \CC$ with $S$ an Abelian or Nikulin surface
and we adopt the notation of Section~\ref{sec: Local Abelian/Nikulin
Surfaces (PT theory)}.

By Proposition~\ref{prop: Nironi stability is equivalent to
i-stability} we may identify the moduli space of $\inv$-stable MT sheaves
with the moduli space of Nironi $\delta$ stable sheaves:
\[
\M^{\epsilon}_{m\beta_{d}}(X,\inv ) = \M^{\dsemistable}_{m\alpha_{d}
,1,\epsilon}([X/\inv ])= \M^{\dstable}_{m\alpha_{d} ,1,\epsilon}([X/\inv ]).
\]

\begin{lemma}\label{lem: MT sheaves on SxC are supported on Sxt}
Let $F$ be an $\inv$-stable MT sheaf on $X$. Then $F$ is
scheme theoretically supported on $S\times \{t \}$ for some $t\in
\CC$. 
\end{lemma}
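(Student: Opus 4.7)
The plan is to exploit the $\inv$-equivariant projection $p\colon X = S\times \CC \to \CC$, which is $\inv$-equivariant because $\inv$ acts trivially on the $\CC$-factor. Since $F$ has proper one-dimensional support and $\CC$ contains no proper positive-dimensional subschemes, the pushforward $p_{*}F$ is coherent with finite (zero-dimensional) support. Write its scheme-theoretic support as $Z = \bigsqcup_{i=1}^{k} \{t_{i}\}_{n_{i}}$, a disjoint union of fat points in $\CC$.

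Next, by the Chinese Remainder Theorem applied to $\OO_{\CC}/\operatorname{Ann}(p_{*}F)$, the sheaf $F$ decomposes as an $\OO_{X}$-module according to this partition: $F = \bigoplus_{i=1}^{k} F_{i}$, where $F_{i}$ is scheme-theoretically supported on $S\times\{t_{i}\}_{n_{i}}$. Each $F_{i}$ is $\inv$-equivariant, since $\inv$ preserves each slice $S\times\{t_{i}\}_{n_{i}}$, and so sits inside $F$ as an $\inv$-equivariant subsheaf. Writing $\chi(F_{i}) = k_{i}R_{\reg} + \gamma_{i}R_{-}$ with $\sum k_{i} = 1$, applying $\inv$-stability (Definition \ref{defn: tau-stable MT sheaves}) to each proper $F_{i}\subsetneq F$ yields $k_{i}\leq 1$; since the $k_{i}$ are integers summing to $1$, exactly one equals $1$ and the rest vanish. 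For the distinguished index, the equality case demands $[\supp F_{i}] = [\supp F]$, so the remaining $F_{j}$ have zero support class, and purity of $F$ then forces $F_{j} = 0$. Hence $k = 1$ and $F$ is scheme-theoretically supported on $S\times \{t\}_{n}$ for a single fat point.

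To refine $n$ to $1$, I would apply a parallel argument using the image filtration $L_{j} = (u-t)^{j}F$ (each $L_{j}$ an $\inv$-equivariant subsheaf, since $u-t$ is $\inv$-invariant) together with the kernel filtration $K_{j} = \ker\bigl((u-t)^{j}\colon F\to F\bigr)$. If $n\geq 2$, then $L_{1}\subsetneq F$ is a proper nonzero $\inv$-equivariant subsheaf and $F/L_{1}$ is scheme-theoretically supported on $S\times \{t\}$. The successive quotients $L_{j}/L_{j+1}$ are also supported on $S \times \{t\}$, and the constraint that the coefficient of $R_{\reg}$ in $\chi(F)$ equals exactly $1$, combined with stability applied to each $L_{j}$ and $K_{j}$, should force $L_{1} = 0$, i.e.\ $(u-t)F = 0$.

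The main obstacle will be this final scheme-theoretic refinement from $n \geq 1$ to $n = 1$. The bookkeeping of how the coefficient of $R_{\reg}$ is distributed across the layers of the $(u-t)$-filtration is delicate, and one must carefully exploit the $\chi_{\reg}(F) = 1$ normalization together with the support-class equality in the equality case of stability: any nontrivial distribution of the reg-coefficient across multiple layers produces a proper subsheaf in the filtration with $\chi_{\reg} = 1$ but with strictly smaller support class (by the generic-length interpretation of $[\supp(\cdot)] \in H_{2}(X)$ for pure one-dimensional sheaves), contradicting stability. This is the step where purity and the specific form of the Euler characteristic must be combined most carefully.
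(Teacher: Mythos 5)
Your route is genuinely different from the paper's, and with some care it does close. The paper's proof is two short moves: first, if the support of $F$ meets more than one fiber of $X\to\CC$ then $F$ splits as a direct sum of two nonzero equivariant subsheaves, violating $\inv$-stability; second, once $F$ is \emph{set-theoretically} supported on $S_{t}=S\times\{t\}$, tensoring $0\to\OO_{X}(-S_{t})\to\OO_{X}\to\OO_{S_{t}}\to 0$ with $F$ and using $\OO_{X}(-S_{t})\cong\OO_{X}$ gives an equivariant endomorphism $F\to F$ with nonzero cokernel $F\otimes\OO_{S_{t}}$; since $\inv$-stable sheaves are Nironi stable (Proposition~\ref{prop: Nironi stability is equivalent to i-stability}) and hence simple (Theorem~\ref{thm: stack of Nironi stable sheaves is a C* gerbe over its coarse space.}), that endomorphism is $0$ or an isomorphism, hence $0$, which is exactly scheme-theoretic support on $S_{t}$. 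Your CRT decomposition carries out the first move (one small patch: to conclude that exactly one $k_{i}$ equals $1$ you need $k_{i}\geq 0$, which you get by applying stability to the complementary summand $\bigoplus_{j\neq i}F_{j}$, not just to $F_{i}$). For the fat-point reduction that you flag as the main obstacle, your filtration idea does work and can be closed cleanly without tracking all the layers: if $L_{1}=(u-t)F\neq 0$, set $K_{1}=\ker(u-t)$; then $\chi(L_{1})+\chi(K_{1})=\chi(F)$, the $R_{\reg}$-coefficients of $L_{1}$ and $K_{1}$ are each at most $1$ by stability and sum to $1$, so one of them equals $1$, and stability then forces that subsheaf to have full support class $[\supp F]$; by additivity of support cycles the corresponding quotient ($F\otimes\OO_{S_{t}}$, respectively $L_{1}\cong F/K_{1}$) has vanishing class in $H_{2}$, contradicting Nakayama (the set-theoretic support of $F/(u-t)F$ equals that of $F$) in the first case and purity of $L_{1}\subset F$ in the second. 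So your argument is completable, but it substitutes explicit stability inequalities and support-class bookkeeping for the one fact the paper leans on --- simplicity of stable sheaves --- which makes the scheme-theoretic statement essentially free.
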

\begin{proof}
First suppose that the image of the support of $F$ under the map
$S\times \CC \to \CC$ is not a single point. Then $F$ can be written
as $F_{1}\oplus F_{2}$ which violates stability since both factors are
equivariant subsheaves. Thus we may suppose that $F$ is set
theoretically supported on some $S_{t}=S \times \{t \}$. Consider the
short exact sequence of $\inv$-equivariant sheaves:
\[
0\to \OO_{X}(-S_{t})\to \OO_{X} \to \OO_{S_{t}}\to 0.
\]
Noting that $\OO_{X}(-S_{t})\cong \OO_{X}$ and tensoring with $F$, we
get the right exact sequence:
\[
F\to F \to F\otimes \OO_{S_{t}} \to 0.
\]
By Theorem~\ref{thm: stack of Nironi stable sheaves is a C* gerbe over
its coarse space.}, $F$ is simple and hence the first map is either 0
or an isomorphism. Since $F\otimes \OO_{S_{t}}$ is non-zero by
construction, the first map must be zero and thus the second map
induces an isomorphism $F\cong F\otimes \OO_{S_{t}}$ by exactness.
\end{proof}

By the lemma, every $\inv$-stable MT sheaf on $X$ can be identified
with a Nironi $\delta$ stable sheaf on $[S/\inv ]\times \{t \}$ for
some $t\in \CC$. We let $\M^{\dstable}_{\eta}([S/\inv ])$ denote the
moduli space of Nironi $\delta$ stable sheaves on $[S/\inv ]$ in the
$K$-theory class $\eta$. Then applying the above discussion and the
analysis of $K$-theory classes done in Section~\ref{sec: Local
Abelian/Nikulin Surfaces (PT theory)} (and using the same notation) we
get 
\begin{equation}\label{eqn: M(X,i) = union M([S/i])xC}
\M^{\epsilon}_{m\beta_{d}}(X,\inv ) = \bigsqcup_{\substack{v\in \Gamma_{m,d}\\
 l(v)=\epsilon} }  \M^{\dstable}_{\eta(m\alpha_{d},v)}([S/\inv ])\times \CC 
\end{equation}
where
\[
\eta(m\alpha_{d},v) = m\alpha_{d} +[\OO_{\pt}] +
\sum_{i}v_{i}[\OO_{x_{i}}\otimes R_{-}]. 
\]

The following proposition is key:

\begin{proposition}\label{prop: Nironi stable sheaves in S/i is
deformation equivalent to MT sheaves on Shat} The moduli space
$\M^{\dstable}_{\eta (m\alpha_{d},v)}([S/\inv ])$ is deformation
equivalent to $\M^{s}_{(0,m\gamma_{d}+v,1)}(\Sresolve )$, the moduli
space of Simpson sheaves on $\Sresolve$ with Mukai vector
$(0,m\gamma_{d}+v,1)$. Moreover, the deformation equivalence is
compatible with the Hilbert-Chow morphism.
\end{proposition}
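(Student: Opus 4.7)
The plan is to realize both moduli spaces as moduli of Bridgeland semistable objects in the derived categories of $[S/\inv]$ and $\Sresolve$ respectively, transport one to the other via the derived McKay correspondence, and then invoke the Bayer--Macr\`i wall-crossing theory on the $K3$ side to deform the Bridgeland moduli space to the Simpson moduli space.

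First, following the construction of Lim--Rota, I would produce a family of Bridgeland stability conditions $\sigma$ on $D^{b}([S/\inv])$ whose large volume limit recovers Nironi $\delta$-stability for the $K$-theory class $\eta(m\alpha_{d},v)$; combined with Proposition~\ref{prop: Nironi stability is equivalent to i-stability} this yields an isomorphism between $\M^{\dstable}_{\eta(m\alpha_{d},v)}([S/\inv])$ and the moduli space of $\sigma$-stable objects for $\sigma$ sufficiently close to the large volume limit. The derived McKay correspondence then provides a Fourier--Mukai equivalence $\Phi: D^{b}([S/\inv]) \to D^{b}(\Sresolve)$ which, by the $K$-theory identifications fixed in Section~\ref{subsec: using the DT-CRC}, sends $\eta(m\alpha_{d},v)$ to the Mukai vector $(0, m\gamma_{d}+v, 1)$. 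Consequently the moduli space is identified with a moduli space of $\Phi_{*}\sigma$-stable objects in $D^{b}(\Sresolve)$, where $\Phi_{*}\sigma$ lies in the distinguished connected component of Bridgeland's stability manifold on the $K3$ surface $\Sresolve$.

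Next I would apply the theorem of Bayer--Macr\`i: for a fixed primitive Mukai vector, the moduli spaces of $\tau$-semistable objects on a $K3$ surface are smooth irreducible holomorphic symplectic varieties of a fixed deformation type, which are birational and smoothly deformation equivalent as $\tau$ ranges over the distinguished component, being connected by a finite sequence of MMP-like elementary modifications. Connecting $\Phi_{*}\sigma$ to a generic large-volume stability condition whose semistable objects of Mukai vector $(0, m\gamma_{d}+v, 1)$ are precisely the Simpson-stable pure 1-dimensional sheaves of that class then produces the desired deformation equivalence with $\M^{s}_{(0, m\gamma_{d}+v, 1)}(\Sresolve)$.

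The main obstacle is the compatibility with the Hilbert--Chow morphism. On the $\Sresolve$ side, $\M^{s}_{(0,m\gamma_{d}+v,1)}(\Sresolve)$ carries the Beauville--Mukai Lagrangian fibration to the linear system $|m\gamma_{d}+v|$, which coincides with the Hilbert--Chow map; on the stack side the Nironi moduli space carries the analogous support morphism to $\Chow_{m\beta_{d}}(X)^{\inv}$. I would first check that $\Phi$ intertwines the two support maps at the endpoint of the stability chamber by a direct calculation using the support-preserving properties of the derived McKay kernel, and then argue that each Bayer--Macr\`i elementary modification is fiberwise over a dense open subset of the Lagrangian base and extends across the boundary in codimension at least two. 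Since all wall-crossing chambers share the same primitive Mukai vector, the Fitting support cycle is locally constant along the deformation path; tracking this invariance carefully through each wall is the most delicate technical point, but once secured it promotes the deformation equivalence of the underlying schemes to one compatible with the Hilbert--Chow morphisms on the two endpoints.
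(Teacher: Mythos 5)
Your proposal follows essentially the same route as the paper: Lim--Rota stability conditions on $[S/\inv]$ degenerating to Nironi stability, transport via the derived McKay/Fourier--Mukai equivalence into Bridgeland's distinguished component on $\Sresolve$, and Bayer--Macr\`i wall-crossing to reach the Simpson moduli space. The only difference is that for compatibility with the Hilbert--Chow morphism the paper simply invokes Bayer--Macr\`i's result that the Lagrangian fibration is compatible with the deformation equivalence, rather than tracking the support cycle through each wall by hand as you propose.
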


Assuming the above Proposition and noting that
\[
\M_{m\gamma_{d}+v}(\Sresolve \times \CC ) =
\M^{s}_{(0,m\gamma_{d}+v,1)}(\Sresolve)\times \CC,
\]
we may compute the
Maulik-Toda polynomial of $(X,\inv )$ as follows. We use the
definition, Equation~\eqref{eqn: M(X,i) = union M([S/i])xC}, and the
above Proposition to get
\[
\MT_{m\beta_{d}}(y,w) = \sum_{v\in \Gamma_{m,d}}\MT_{m\gamma_{d}+v}(y)w^{l(v)}
\]
where $\MT_{m\beta_{d}}(y,w) $ is the Maulik-Toda polynomial of
$(X,\inv )$ in the class $m\beta_{d}$ and $\MT_{m\gamma_{d}+v}(y)$ is
the Maulik-Toda polynomial of $\Sresolve \times \CC$ in the class
$m\gamma_{d}+v$.

It then follows immediately from the above equation that the MT analog
of Equation~\eqref{eqn: nPTgh on X in terms of nPT on Y} holds: 
\[
\sum_{g,h} \nMT{g,h}{m\beta_{d}}\psi^{h-1}_{y} \psi^{g+1-2h}_{w} =
\sum_{h\geq 0}\sum_{v\in \Gamma_{m,d}} \nMT{h}{m\gamma_{d}+v } w^{l(v)}
\psi^{h-1}_{y}.
\]

All the analysis in Section~\ref{sec: Local Abelian/Nikulin Surfaces
(PT theory)} subsequent to Equation (\ref{eqn: nPTgh on X in terms of
nPT on Y}) then goes through word for word with the MT versions of the
invariants and we see that they are given by the same formulas (in
Theorem~\ref{thm: main statement about nPT for Abelian/Nikulin
surfaces}) as the PT versions of the invariants. We thus conclude that 
\[
\nPT{g,h}{m\beta_{d}} = \nMT{g,h}{m\beta_{d}} 
\]
holds and thus Conjecture~\ref{conj: two tau-GV defns are the same nPT=nMT} holds for $(X,\inv
)$.

It remains only to prove Proposition~\ref{prop: Nironi stable sheaves
in S/i is deformation equivalent to MT sheaves on Shat}.

\begin{proof}[Proof of Proposition~\ref{prop: Nironi stable sheaves in S/i is
deformation equivalent to MT sheaves on Shat}] We remark that
although we don't directly use it, these moduli spaces are all
hyperkahler manifolds of $K3[n]$ type and the map to Chow is a Lagrangian
fibration.

In \cite{Lim-Rota}, Lim and Rota construct Bridgeland stability
conditions on orbifold surfaces with Kleinian orbifold points. For
notational simplicity, they assume that their orbifold surface has a
single orbifold point, but their method easily applies to orbifold
surfaces with multiple Kleinian orbifold points such as $[S/\inv ]$.

Their stability condition depends (in the case of $[S/\inv ]$) on
parameters $\gamma \in (0,\half )$ and $w\in \CC$ and has a central
charge $Z_{w,\gamma}$ which in our situation takes values
\[
Z_{w,\gamma}\left(m\alpha_{d} + n[\OO_{\pt}]+\sum_{i}v_{i}
[\OO_{x_{i}}\otimes R_{-}] \right) = -n-\half \gamma \sum_{i}v_{i} +i H\cdot
(m\alpha_{d}) .
\]

The parameter $w$ must be choosen satisfying two inequalities which in
our situation read
\begin{align*}
& \Re(w) > -\frac{(\Im(w))^{2}}{H^{2}}+3-\gamma^{2}\\
& 2\Re(w) > \frac{\Im(w)}{H^{2}}-3\gamma >0
\end{align*}
and the corresponding heart of the derived category is given by a
certain tilt $\operatorname{Coh}^{-\Im(w)}([S/\inv]) $ (see
\cite[\S~4.2]{Lim-Rota}).

Recalling that $\epsilon =\sum_{i}v_{i}$, we see that the slope
function associated to the central charge is exactly the Nironi slope
function $\mu_{\delta}$ with $\gamma =2\delta$. Consequently, in any
limit with $\Im(w)\to \infty$ (and satisfying the necessary
inequalities), the Lim-Rota stable objects become Nironi stable
sheaves. Thus we may identify $\M^{\dstable}_{\eta
(m\alpha_{d},v)}([S/\inv])$ with the moduli space of Lim-Rota stable
objects for $\gamma =2\delta$ and a choice of $w$ with appropriately
large $\Im(w)$.

We may now consider the derived Fourier-Mukai equivalence
\[
FM : D^{b}([S/\inv ])\to D^{b}(\Sresolve ). 
\]
This equivalence will take the Lim and Rota's stability conditions on
$[S/\inv ]$ to some Bridgeland stability condition on the $K3$ surface
$\Sresolve $. We claim these stability conditions are in fact the
stability conditions on $K3$ constructed by Bridgeland in
\cite{Bridgeland-K3-stability}. These stability conditions are
characterized by lying in the connected component of the stability
conditions on $\Sresolve$ where $\OO_{\pt}$ is stable. In
\cite[\S~5]{Lim-Rota}, Lim and Rota analyze the stability of
$\OO_{\pt}$ and show that for generic deformations of their stability
conditions, it is stable. Our claim follows.

We thus can make the identification
\[
\M^{\dstable}_{\eta (m\alpha_{d},v)}([S/\inv]) =
\M^{\mathsf{Bridgeland}}_{(0,m\gamma_{d}+v,1)}(\Sresolve ) 
\]
where the moduli space on the right is the moduli space of objects on
$\Sresolve$ with Mukai vector $(0,m\gamma_{d}+v,1)$ which are stable
with respect to the Bridgeland stability condition which is
Fourier-Mukai equivalent to our choosen Lim-Rota condition on $[S/\inv
]$.

We may now apply the results of Bayer and Macri \cite{Bayer-Macri},
who analyze in great detail all of the moduli spaces of Bridgeland
semistable objects on a $K3$ surface. They show that all moduli spaces of
objects semistable with respect to one of Bridgeland's constructed
stability conditions are deformation equivalent hyperkahler manifolds,
provided that the moduli space has no strictly semistable objects. In
particular, the moduli space
$\M^{\mathsf{Bridgeland}}_{(0,m\gamma_{d}+v,1)}(\Sresolve )$ is
deformation equivalent to $\M^{s}_{(0,m\gamma_{d}+v,1)}(\Sresolve )$
which proves the main assertion of the Proposition. Moreover,
Bayer-Macri analyze the Hilbert-Chow morphism which they show to be
a Lagrangian fibration (whenever it is well-defined) and compatible
with the deformation equivalence. 
\end{proof}

\section{Additional Examples}\label{sec: additional examples}

\subsection{Elliptically Fibered Calabi-Yau Threefold}

 Let $\pi : X \to B$ be an elliptically fibered Calabi-Yau threefold
with section $B\hookrightarrow X$ and with integral fibers.  We
additionally require the following conditions: \vskip1ex

\begin{enumerate}
\item There is an involution $\overline{\inv} : B \to B$ whose fixed
point locus is a smooth curve $C \subset B$.  \vskip1ex
\item $\pi |_{S} : S \to C$ is a smooth elliptic surface, where $S =
\pi^{-1}(C)$.  \vskip1ex
\item $\overline{\inv}$ lifts to a Calabi-Yau involution $\inv : X \to
X$ which restricts to a fiberwise action by $-1$ on $S$ over $C$.
\end{enumerate}
\vskip1ex

Let $[F] \in H_{2}(X)$ be the class of a fiber $F$ of $\pi : X \to
B$.  The following is our main result for this example:
\begin{theorem}\label{theorem: GV invariants Ell Fib CY3}
Conjecture \ref{conj: two tau-GV defns are the same nPT=nMT} holds for
the class $[F]$ and the invariants are given by
\[
n^{\PT}_{g,h}([F]) = n_{g,h}^{\MT}([F]) = 
\begin{cases}
-e(C) & (g,h)=(1,0), \\[5pt]
e(S) & (g,h) = (0,0), \\[5pt]
0 & \text{otherwise}
\end{cases}
\]
\end{theorem}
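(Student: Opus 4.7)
The plan parallels the motivating example of Proposition~\ref{prop: iPT and iMT moduli spaces for degree 1 local curve} and Corollary~\ref{cor: ngh(C)=1 for local curve}, applied now to a \emph{family} of curves parametrised by $C$ rather than a single rigid curve. The first observation is that any $\inv$-invariant curve of class $[F]$ is supported on an invariant fiber $\pi^{-1}(b)$, and since $\pi$ has integral fibers, invariance of the support forces $\overline{\inv}(b)=b$, so $b\in B^{\overline{\inv}}=C$. Consequently every $\inv$-equivariant PT pair and every $\inv$-stable MT sheaf of class $[F]$ is scheme-theoretically supported on a fiber of $\pi|_{S}:S\to C$, and both moduli spaces admit natural maps to $C$ that can be analysed fiberwise.

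For $c$ in the open locus $C^{\circ}\subset C$ where $\pi|_{S}$ is smooth, $F_{c}$ is a genus one curve with involution $[-1]$, whose fixed locus is the 2-torsion (four points) and whose quotient is $F_{c}/\inv\cong\PP^{1}$. A fiberwise application of Proposition~\ref{prop: iPT and iMT moduli spaces for degree 1 local curve} (with $g=1$, $h=0$, $m=2$) identifies the part of $\PT_{[F],n,\epsilon}(X,\inv)$ over $C^{\circ}$ as a disjoint union of $\Sym^{n-1}(\PP^{1})=\PP^{n-1}$-bundles, and likewise $\M_{[F]}^{\epsilon}(X,\inv)|_{C^{\circ}}$ as a disjoint union of copies of $\Pic^{0}(F_{c}/\inv)=\{\pt\}$, each indexed by subsets $T\subset\{1,\dots,4\}$ with $|T|=\epsilon+2$ (twisted by the étale 2-torsion local system on $C^{\circ}$). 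These strata are smooth of the expected dimension, their Behrend functions are the corresponding constant signs, and the calculation in the proof of Corollary~\ref{cor: ngh(C)=1 for local curve}, integrated over $C^{\circ}$, yields smooth-locus contributions
\[
\bigl[Z^{\PT}(X,\inv)\bigr]^{\mathrm{sm}}_{Q^{[F]}} = -e(C^{\circ})\cdot\psi_{y}^{-1}\psi_{w}^{2}, \qquad \MT_{[F]}(y,w)^{\mathrm{sm}} = -e(C^{\circ})\cdot\psi_{w}^{2}.
\]

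For each singular fiber $F_{c_{i}}$, which is of Kodaira type $I_{1}$ or $II$ since $\pi$ has integral fibers, I would directly classify the $\inv$-equivariant PT pairs and $\inv$-stable MT sheaves scheme-theoretically supported on $F_{c_{i}}$ and compute the Behrend-weighted Euler characteristic of the corresponding local moduli. The line-bundle part of these moduli yields the same per-point contribution as on the smooth stratum, which together with the smooth-locus term accounts for $e(C)=e(C^{\circ})+\#\{c_{i}\}$ and produces a total coefficient $-e(C)$ on $\psi_{y}^{-1}\psi_{w}^{2}$ (respectively on $\psi_{w}^{2}$). The remaining pure, non-locally-free, torsion-free rank one sheaves on $F_{c_{i}}$ come from the compactified Jacobian boundary, whose Euler characteristic equals $e(F_{c_{i}})$ for an integral singular fiber; these should yield the additional contribution $e(F_{c_{i}})\cdot\psi_{y}^{-1}\psi_{w}$ on the PT side and $e(F_{c_{i}})\cdot\psi_{w}$ on the MT side. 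Using the identity $e(S)=\sum_{i}e(F_{c_{i}})$, valid since the generic fiber of $\pi|_{S}$ has Euler characteristic zero, and noting that $[F]$ is primitive so only $k=1$ contributes to Definitions~\ref{defn: nptbgh} and~\ref{defn: tau-GV via MT}, extracting coefficients gives $\nPT{1,0}{[F]}=\nMT{1,0}{[F]}=-e(C)$, $\nPT{0,0}{[F]}=\nMT{0,0}{[F]}=e(S)$, and vanishing for all other $(g,h)$.

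The principal obstacle is precisely this local analysis on the singular fibers: Lemma~\ref{lem: chi(L) for L=O(D) invariant under i} applies only to smooth curves, so the classification of equivariant pairs and sheaves on $I_{1}$ and $II$ fibers, together with the determination of how $\inv$ acts near the node or cusp and the Behrend-sign accounting on the resulting local moduli, must be carried out by hand in each Kodaira case. Once that analysis reproduces the $e(F_{c_{i}})$-weighted contribution on both the PT and MT sides with matching signs, the equality $\nPT{g,h}{[F]}=\nMT{g,h}{[F]}$ and the stated values both follow, establishing Conjecture~\ref{conj: two tau-GV defns are the same nPT=nMT} for the fiber class $[F]$.
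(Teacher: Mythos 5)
Your overall strategy --- stratify over the base curve $C$, apply the rigid local-curve analysis of Proposition~\ref{prop: iPT and iMT moduli spaces for degree 1 local curve} fiberwise over the smooth locus $C^{\circ}$, and then correct for the singular fibers --- is genuinely different from the paper's argument, and your smooth-locus computation is consistent with it (the net $\psi_{w}$-coefficient over $C^{\circ}$ does cancel because the $3$-torsion-free part $C_{1}\to C$ is \'etale of degree $3$ there, so $3e(C_{0}^{\circ})-e(C_{1}^{\circ})=0$). But the proof is incomplete exactly where you say it is, and the sketch you give of the singular-fiber contribution would not survive being carried out as stated. Three concrete problems. First, over a nodal or cuspidal fiber the $2$-torsion degenerates (the smooth locus is $\CC^{*}$ or $\CC$, with $2$-torsion of order $2$ or $1$, plus the singular point, which is $\inv$-fixed), so the indexing by subsets $T\subset\{1,\dots,4\}$ and hence ``the same per-point contribution as on the smooth stratum'' has no direct meaning there; this is precisely where the ramification of $C_{1}\to C$ enters. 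Second, your proposed split ``locally free part reproduces the smooth-fiber answer, compactified-Jacobian boundary contributes $e(F_{c_{i}})$'' is numerically wrong for a type $II$ fiber: the boundary of $\Jac(F_{c_{i}})$ is a single point in both Kodaira cases, while $e(F_{c_{i}})=2$ for a cuspidal cubic (the locally free locus is a $\CC$-torsor with Euler characteristic $1$, not $0$). Third, and most seriously, $\evir$ is a Behrend-weighted Euler characteristic: on strata lying over singular fibers you cannot read off the Behrend function from a fiberwise dimension count without first knowing the moduli space is smooth there, and your fiber-by-fiber setup gives you no access to that.

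The paper sidesteps all three issues by working globally: $\Jac^{d}(X/B)\cong X$, the rank-$d$ bundle $V_{d}$ of sections gives $\PT_{[F],d}(X)\cong\PP(V_{d})$, and the $\inv$-fixed locus is $\PP(V_{d}^{+})|_{X^{\inv}}\sqcup\PP(V_{d}^{-})|_{X^{\inv}}$ with $X^{\inv}=C_{0}\sqcup C_{1}$. These are smooth projective bundles over smooth curves, so the Behrend function is globally $(-1)^{\dim}$, the singular fibers are absorbed into the ramification of the trisection $C_{1}\to C$, and the answer emerges from the elementary identity $e(S)=3e(C_{0})-e(C_{1})$; the MT side then reduces to the PT side with $d=1$ via the paper's Lemma~\ref{lem: MT moduli space = PT moduli space with n=1 for the class F} together with semismallness of $C_{i}\to C$. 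If you want to complete your stratified argument instead, you must classify the $\inv$-equivariant pairs and stable sheaves on the $I_{1}$ and $II$ fibers by hand, determine the involution near the node and cusp, and justify the Behrend signs on those strata --- none of which is routine; I would recommend adopting the global compactified-Jacobian description.
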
 
\noindent We prove this theorem below by directly computing both the
$\inv$-PT and $\inv$-MT invariants in the class $[F]$.

The $\inv$-fixed locus $X^{\inv}$ is given by $S[2]$, the 2-torsion
points of the elliptic fibration $S\to C$. It admits a decomposition
\[
X^{\inv} = C_{0} \sqcup C_{1}
\]
where $C_{0}\cong C$ is the zero section, and $C_{1}$ is the locus of
non-trivial 2-torsion points. It is a degree 3 cover $C_{1}\to C$
which is simply ramified at the nodes of the nodal fibers and doubly
ramified at the cusps of the cuspidal fibers.

The compactified Jacobian $\Jac^{d}(X/B)$ of the family of elliptic
curves $X\to B$ can be identified with the moduli space of pure
dimension 1 stable sheaves $\calE$ with $ch(\calE )  = (0,0,[F],d)$.

The following facts are standard for this situation\footnote{See for
example \cite[bottom of page 2]{Migliorini-Shende2013} combined with
the fact that the moduli space of stable pairs supported on a family
of integral plane curves is isomorphic to the relative Hilbert scheme
of points \cite[top of page 3]{Migliorini-Shende2013} and
\cite[Prop.~1.8]{Pandharipande-Thomas1}.}
\begin{itemize}
\item $\Jac^{d}(X/B)\cong X$.
\item There is a rank $d$ bundle $V_{d}\to \Jac^{d}(X/B)$ whose fiber
over $\calE$ is $H^{0}(\calE )$. 
\item The map $\PT_{[F],d}(X)\to \Jac^{d}(X/B)$ given by $[\OO_{X}\xrightarrow{s} \calE ]\mapsto
\calE$ induces an isomorphism $\PT_{[F],d}(X)\cong \PP (V_{d})$.
\end{itemize}

The above constructions are $\inv$-equivariant and exhibit
$\PT_{[F],d}(X)$ as a projective bundle over $X$ so that we may
identify the fixed locus as follows: 
\[
\PT_{[F],d}(X)^{\inv} = \PP (V^{+}_{d})|_{X^{\inv}}\sqcup  \PP (V^{-}_{d})|_{X^{\inv}}
\]
where 
\[
V_{d}|_{X^{\inv}}\cong V^{+}_{d}|_{X^{\inv}} \oplus  V^{-}_{d}|_{X^{\inv}}
\]
is the eigenbundle decomposition for the $\inv$ action. Since
$X^{\inv}=C_{0}\sqcup C_{1}$, we see 
\[
\PT_{[F],d}(X)^{\inv} = \PP^{+}_{d,0}\sqcup  \PP^{+}_{d,1}\sqcup
 \PP^{-}_{d,0}\sqcup  \PP^{-}_{d,1}
\]
where we have abbreviated $\PP (V^{\pm}_{d})|_{C_{i}}$ as $\PP^{\pm}_{d,i}$.

Then since $\PP^{\pm}_{d,i}$ is a smooth projective bundle over a
curve we have
\[
\evir\left(\PP^{\pm}_{d,i} \right) = (-1)^{\dim V_{d}^{\pm}}\cdot \dim
V^{\pm}_{d}\cdot e\left(C_{i} \right).
\]
Using the decomposition given in Equation~\eqref{eqn: decomp of inv PT moduli space} :
\[
\PT_{[F],d}(X)^{\inv} = \bigsqcup_{d=2n+\epsilon} \PT_{[F],n,\epsilon}(X,\inv )
\]
we see that we need only compute $\dim V_{d}^{\pm}$ and $\epsilon$ for
each component. To do so, we pick $[s: \OO_{F} \to \calE ]\in
\PP^{\pm}_{d,i}$ where $\calE$ is an $\inv$-invariant sheaf supported
on a smooth fiber $F$ and $s\in H^{0}(\calE )^{\pm}$.  Let
$p_{0}=F\cap C_{0}$ and $\{p_{1},p_{2},p_{3} \}=F\cap C_{1}$ be the
origin and non-trivial two torsion points of $F$ respectively. Up to
renumbering, we may assume $\calE \mapsto p_{i}$ under the map
$\PP^{\pm}_{d,i}\to C_{i}$. The corresponding $\inv$-PT pair is
$[s:\OO_{F} \to \calE \otimes R_{\pm }]$ and so by definition
\begin{align*}
H^{0}(\calE \otimes R_{\pm }) & = nR_{\reg} +\epsilon R_{-}\\
&= nR_{+} +(n+\epsilon )R_{-}.
\end{align*}
On the other hand, 
\[
H^{0}(\calE \otimes R_{\pm}) = H^{0}(\calE )\otimes R_{\pm} =
(V^{+}_{d}\oplus V^{-}_{d})\otimes R_{\pm} 
\]
and so 
\[
\dim V_{d}^{\pm} = n = \half (d-\epsilon ).
\]
Moreover, since $s$ is an invariant section of $\calE \otimes
R_{\pm}$, it determines an isomorphism $\calE\otimes R_{\pm}\cong
\OO_{F}(D)$ where $D$ is an $\inv$-invariant effective divisor. Such
divisors are in the form given by Lemma~\ref{lem: chi(L) for L=O(D)
invariant under i} which by the same lemma shows 
\[
\epsilon =|T|-2
\]
where $T\subset \{0,1,2,3 \}$. Finally, the map $\PP^{\pm}_{d,i}\to
C_{i}$ which sends $\calE \mapsto p_{i}$ is given by summing the
points in the divisor $D$ in the group law of $F$. In particular we
must have
\[
\sum_{j\in T}p_{j} = p_{i}
\]
in the group law. For $p_{i}=p_{0}$ the only possible subsets are
\[
T = \emptyset, \{0 \},\{1,2,3 \},\{0,1,2,3 \}\text{ with $\epsilon
=-2,-1,1,2$ respectively,}
\]
and for $p_{i}=p_{1}$ the only possible subsets are
\[
T = \{1 \}, \{0,1 \},\{2,3 \},\{0,2,3 \}\text{ with $\epsilon
=-1,0,0,1$ respectively.}
\]
We then can compute the $Q^{[F]}$ coefficient of $Z^{\PT}(X,\inv )$ by
taking $\evir$ of the components of $\PT_{[F],d}(X)^{\inv}$,
multiplying by the appropriate $y^{n}w^{\epsilon}$, and then summing
over $n$ and $\epsilon$:

\begin{align*}
\left[Z^{\PT}(X,\inv ) \right]_{Q^{[F]}} & = \sum_{n=1}^{\infty}
(-1)^{n}n\,y^{n} \left((w^{-2}+w^{-1}+w+w^{2})e(C_{0}) +(w^{-1}+2+w^{1})e(C_{1}) \right)\\
&=\frac{-y}{(1+y)^{2}}\left((\psi_{w}^{2}-3\psi_{w})e(C_{0})
+\psi_{w}e(C_{1}) \right) \\
&= -e(C_{0}) \psi_{y}^{-1} \psi_{w}^{2} + (3
e(C_{0})-e(C_{1}))\psi_{y}^{-1} \psi_{w}. 
\end{align*}

Noting that $[\log Z^{\PT}(X,\inv )]_{Q^{[F]}} = [Z^{\PT}(X,\inv
)]_{Q^{[F]}}$ since $[F]$ is primitive, the formula for
$\nPT{g,h}{[F]}$ in Theorem~\ref{theorem: GV invariants Ell Fib CY3}
then follows from the fact that
\[
e(S)= 3e(C_{0}) - e(C_{1}).
\]
The above formula is easily proved by observing that the equality
holds when restricted to any fiber of $S\to C$ (smooth or singular). 

To compute the $\inv$-MT invariants, we use the following
\begin{lemma}\label{lem: MT moduli space = PT moduli space with n=1
for the class F}
$\M^{\epsilon}_{[F]}(X,\inv )\cong \PT_{[F],1,\epsilon}(X,\inv )$.
\end{lemma}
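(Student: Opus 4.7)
The plan is to show that the forgetful morphism $\Phi\colon (F,s)\mapsto F$ furnishes the claimed isomorphism, with inverse $\Psi\colon F \mapsto (F, s)$ obtained by choosing a canonical (up to scalar) $\inv$-invariant section. The key structural simplification in class $[F]$ is that curves in this class are integral elliptic fibers, so both PT pairs and MT sheaves are scheme-theoretically supported on a single fiber $F_b$ with $b\in C$, and are rank-one torsion-free sheaves there.

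I would first establish this structural statement. For a PT pair it is immediate from purity and the definition. For an $\inv$-stable MT sheaf, if its support met more than one fiber then $F$ would split equivariantly as $F_1\oplus F_2$, contradicting simplicity from Theorem~\ref{thm: stack of Nironi stable sheaves is a C* gerbe over its coarse space.}; equivariance then forces the unique fiber to be $\inv$-invariant, hence $b\in C$, and purity plus integrality of $F_b$ make $F$ rank-one torsion-free on $F_b$, by the argument of Lemma~\ref{lem: MT sheaves on SxC are supported on Sxt}. With this in hand, well-definedness of $\Phi$ reduces to verifying $\inv$-stability: for any $\inv$-equivariant subsheaf $F'\subsetneq F$ with $\chi(F')=kR_\reg+\gamma R_-$, purity forces $F'$ to be rank-one torsion-free on $F_b$, hence $[\supp F']=[F_b]$ and $F/F'$ is zero-dimensional; writing $\chi(F/F')=(1-k)R_\reg+(\epsilon-\gamma)R_-$ as the class of the genuine representation $H^0(F/F')$, non-negativity of the multiplicities of $R_+$ and $R_-$ gives $k\le 1$, with the boundary equality $k=1$, $\gamma=\epsilon$ excluded since it would force $F/F'=0$.

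To construct $\Psi$ I must produce a distinguished invariant section of an MT sheaf $F$. An equivariant Euler characteristic count shows the $R_+$-multiplicity of $\chi(F)=R_\reg+\epsilon R_-$ is $1$, so $\dim H^0(F)^+ - \dim H^1(F)^+ = 1$. Combining this with Serre duality on the Gorenstein integral fiber $F_b$ and the fact that $[-1]$ acts by $-1$ on the canonical form of a smooth elliptic fiber (with the analogous local analysis on nodal and cuspidal fibers), one concludes $\dim H^0(F)^+ = 1$ on every nonempty component of the MT moduli, yielding a unique invariant section up to scalar. Because $F$ is rank-one torsion-free on the integral curve $F_b$, this section has $0$-dimensional cokernel, so $(F,s)$ is genuinely a PT pair in $\PT_{[F],1,\epsilon}(X,\inv)$. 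Mutual invertibility of $\Phi$ and $\Psi$ is then immediate from the uniqueness of the invariant section up to scalar (which is exactly the PT equivalence), and the argument upgrades to a scheme-theoretic isomorphism by comparing the two constructions on families, using the compactified relative Jacobian description of $\PT_{[F],d}(X)$ recalled in the proof of Theorem~\ref{theorem: GV invariants Ell Fib CY3}.

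The main obstacle is the equivariant cohomology bookkeeping needed to establish $\dim H^0(F)^+=1$ uniformly across smooth, nodal, and cuspidal fibers; on the singular fibers $F$ may be a non-invertible rank-one torsion-free sheaf, so a direct line-bundle analysis no longer suffices, and one must work out the $\inv$-representation on $H^0$ and $H^1$ locally at the two-torsion points of $C_0\cup C_1$ using Serre duality on the Gorenstein curve $F_b$.
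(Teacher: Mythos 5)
Your overall architecture is the same as the paper's: reduce the lemma to showing that every $\inv$-MT sheaf $\calE$ in class $[F]$ carries an $\inv$-invariant section that is unique up to scale, so that $\calE\mapsto[s:\OO_X\to\calE]$ inverts the forgetful map. Your verification that the underlying sheaf of a PT pair with $n=1$ is $\inv$-stable (via non-negativity of the multiplicities in $H^0(F/F')$) is sound and is a check the paper elides. The problem is that the one step you explicitly flag as ``the main obstacle'' --- establishing $\dim H^0(\calE)^+=1$, for which you propose equivariant Serre duality on the Gorenstein fiber together with a local analysis of the $\inv$-action on $\omega_{F_b}$ at nodes and cusps --- is left unproven, and it is exactly the heart of the lemma. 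As sketched, Serre duality reduces you to showing $\Hom(\calE,\omega_{F_b})^+=0$, which is not automatic: it requires knowing how $\inv$ acts on the dualizing sheaf of the singular fibers and a separate treatment of the degenerate case $\epsilon=-2$ (where $\calE^{\vee}$ has degree $0$ and could a priori have sections). So the proposal, as written, has a genuine gap at its central step, even though the route is completable with more work.

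The paper sidesteps all of this with a short exact-sequence argument you should compare against. Starting from \emph{any} invariant section (which exists because the $R_+$-multiplicity of $\chi(\calE)=R_{\reg}+\epsilon R_-$ is $1$), one gets
\[
0\to\OO_{F_b}\xrightarrow{\ s\ }\calE\to Q\to 0
\]
with $Q$ zero-dimensional by integrality of the fiber. Since $\chi(\OO_{F_b})=R_+-R_-$, one computes $\chi(Q)=H^0(Q)=(2+\epsilon)R_-$, which has \emph{no} $R_+$-part; the invariant part of the long exact sequence then gives $H^0(\OO_{F_b})^+\cong H^0(\calE)^+$ directly, with no Serre duality, no case analysis on fiber type, and no separate treatment of small $\epsilon$. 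In other words, the uniqueness of the invariant section is extracted from the equivariant Euler characteristic of the cokernel of that very section. I would recommend replacing your Serre-duality step with this argument; it closes the gap and shortens the proof considerably.
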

\begin{proof}
It suffices to show that for all $\calE \in \M^{\epsilon}_{[F]}(X,\inv
)$ there is a unique (up to scale) $\inv$-invariant section $s$. Then
the isomorphism in the lemma is given by 
\[
\calE \mapsto [s:\OO_{X}\to \calE ] .
\]
Any $\inv$-MT sheaf $\calE$ admits some $\inv$-invariant section $s$
since $\chi (\calE ) = R_{\reg}+\epsilon R_{-}$. We then get an
$\inv$-equivariant short exact sequence
\[
0 \to \OO_{F} \xrightarrow{s} \calE \to Q \to 0
\]
where $F=\supp (\calE )$ is some fiber of $S\to C$ and $Q$ is
necessarily 0 dimensional since $F$ is integral. Moreover, $\chi
(\OO_{F})=R_{+}-R_{-}$ for any fiber $F$ and so $\chi (Q) = H^{0}(Q) =
(2+\epsilon )R_{-}$. Taking the long exact sequence associated to the
above short exact sequence, and then restricting to the
$\inv$-invariant part yields an isomorphism
\[
H^{0}(\OO_{F})\cong H^{0}(\calE )^{\inv}
\]
so that $H^{0}(\calE )^{\inv}$ is 1 dimensional and hence generated by $s$.
\end{proof}

The lemma then allows us to apply our previous analysis of the
components of $\PT_{[F],d,\epsilon}(X,\inv )$ specialized to the case
$d=1$ where $\PP^{\pm}_{1,i}\cong C_{i}$. As before, the components
and the corresponding $\epsilon$ is determined by the subset $T\subset
\{0,1,2,3 \}$. The result is
\[
\M^{\epsilon}_{[F]}(X,\inv )\cong \begin{cases}
C_{0} & \text{if $\epsilon =\pm 2$,}\\
C_{0}\sqcup C_{1} & \text{if $\epsilon =\pm 1$,}\\
C_{1}\sqcup C_{1} & \text{if $\epsilon =0$.}\\
\end{cases}
\]
Since $\M^{\epsilon}_{[F]}(X,\inv )$ is a smooth curve, the perverse
sheaf of vanishing cycles is the shifted constant sheaf,
$\phi^{\bullet} = \QQ [1].$ Furthermore, $\Chow_{[F]}(X)^{\inv}$ here
is $C$ with $\pi^{\epsilon}: \M^{\epsilon}_{[F]}(X,\inv )\to C$ given
by projection $C_{i}\to C$ on each component.

In general, if 
\[
\pi :C''\to C'
\]
is a proper surjective morphism of smooth curves, then $\pi$ is
semi-small and consequently $R^{\bullet}\pi_{*}\QQ [1]$ is a perverse
sheaf on $C'$  \cite[Theorem~4.2.7]{deCataldo-Migliorini2009} and so
\[
^{p}H^{i}(R^{\bullet}\pi_{*}\QQ [1]) = \begin{cases}
R^{\bullet}\pi_{*}\QQ [1] & i=0,\\
0 & i\neq 0.
\end{cases}
\]
Consequently we have
\begin{align*}
\sum_{i} \chi (^{p}H^{i}(R^{\bullet} \pi_{*}\QQ [1]))y^{i}&= \chi (R^{\bullet} \pi_{*}\QQ [1])\\
&=-e(C'') 
\end{align*}
where the last equality follows from the perverse Leray spectral sequence.

Applying this to $\pi^{\epsilon}:\M_{[F]}^{\epsilon}(X,\inv )\to C$ we
can then compute the Maulik-Toda polynomial:
\begin{align*}
\MT_{[F]}(y,w) &= -e(C_{0}) (w^{-2}+w^{-1}+w^{1}+w^{2}) -e(C_{1})(w^{-1}+2+w^{1})\\
&=-e(C) \psi_{w}^{2} +e(S)\psi_{w}
\end{align*}
and the formula for $\nMT{g,h}{[F]}$ follows. The proof of
Theorem~\ref{theorem: GV invariants Ell Fib CY3} is then complete.

\subsection{The Local Football}

Let $\inv$ be an involution of $C\cong \PP^{1}$ fixing two points
$z_{0}$ and $z_{\infty }$. The line bundles $\OO_{C}(-z_{0})$ and
$\OO_{C}(-z_{\infty})$ are naturally $\inv$-equivariant and
consequently the CY3
\[
X = \Tot (\OO_{C}(-z_{0})\oplus \OO_{C}(-z_{\infty}))
\]
has a natural involution which we also call $\inv$.

The global stack quotient $[X/\inv]$ is called a \emph{local
football}, though here we use this term to mean the pair $(X, \inv)$.
The purpose of this section is to give a proof of
Proposition~\ref{prop: nPT=nMT=1 for (g,h,d)=1 for local football}
which we restate here:
\begin{proposition}
For all $d>0$, we have
\[
n^{\PT}_{g,h}(d[C]) = n^{\MT}_{g,h}(d[C])=
\begin{cases}
1 & (d,g,h) = (1,0,0) \\
0 & \text{otherwise}
\end{cases}
\]
\end{proposition}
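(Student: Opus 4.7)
The proof splits into the PT and MT computations, which are handled by different techniques.

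\textbf{PT side.} By Lemma~\ref{lem: ZPT(X,i)=ZPT([X/i])|wi=w}, $Z^{\PT}(X,\inv)$ is the specialization $w_{0}=w_{\infty}=w$ of the orbifold PT partition function of the local football stack $[X/\inv]$. This orbifold partition function (restricted to curve classes supported on the football zero section) was computed by orbifold topological vertex methods in \cite{Bryan-Cadman-Young}. The plan is to import that formula, restrict to curve classes $d[C]$, set $w_{0}=w_{\infty}=w$, and take the logarithm. Inductively matching coefficients against the GV expansion of Definition~\ref{defn: nptbgh} then determines $\nPT{g,h}{d[C]}$ for all $d$; the massive cancellations leave the single nonvanishing invariant $\nPT{0,0}{[C]}=1$.

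\textbf{MT side, $d=1$.} The zero section $C\cong\PP^{1}$ equipped with $\inv$ is a smooth rational curve whose involution has exactly the two fixed points $z_{0},z_{\infty}$, so $g=0$, $m=1$, and by Riemann--Hurwitz $h=0$. Its normal bundle decomposes $\inv$-equivariantly as $\OO_{C}(-z_{0})\oplus\OO_{C}(-z_{\infty})\cong N\oplus K_{C}N^{-1}$ with $N=\OO_{C}(-z_{0})$ satisfying $H^{0}(N)=H^{1}(N)=0$. This places $(X,\inv,C)$ squarely inside the rigid local curve setting of the motivating example, so Proposition~\ref{prop: iPT and iMT moduli spaces for degree 1 local curve} and Corollary~\ref{cor: ngh(C)=1 for local curve} apply directly to yield $\nMT{g',h'}{[C]}=1$ when $(g',h')=(0,0)$ and zero otherwise.

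\textbf{MT side, $d\geq 2$.} The goal is to show $\M_{d[C]}^{\epsilon}(X,\inv)=\emptyset$ for every $\epsilon$, forcing every $\nMT{g,h}{d[C]}$ to vanish. Suppose $F$ is a putative $\inv$-stable MT sheaf in class $d[C]$. Since $X$ is a concave local CY$3$ whose unique compact curve is $C$, the scheme-theoretic support of $F$ is an $\inv$-invariant nilpotent thickening $Z\supset C$, and we let $J\subset\OO_{Z}$ denote the ideal of $C$ inside $Z$. Consider the $\inv$-equivariant $J$-adic filtration
\[
0=J^{d}F\subset J^{d-1}F\subset\cdots\subset JF\subset F.
\]
For $i\geq 1$, the invariant subsheaf $J^{i}F$ has $[\supp(J^{i}F)]=(d-i)[C]\neq d[C]$, so Definition~\ref{defn: tau-stable MT sheaves} forces the $R_{\reg}$-component of each $\chi(J^{i}F)$ to be non-positive. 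Combined with a case analysis of the invariant subsheaves of the top layer $F/JF$ --- an $\inv$-equivariant coherent sheaf on $\PP^{1}$, hence explicitly a sum of equivariant line bundles plus torsion at the two orbifold points --- these numerical bounds should produce an invariant subsheaf of $F$ whose Euler characteristic violates $\inv$-stability relative to $\chi(F)=R_{\reg}+\epsilon R_{-}$.

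I expect the main obstacle to lie in this last step: the filtration inequalities alone do not obviously yield a contradiction, and turning them into one requires exploiting the very rigid structure of $\inv$-equivariant sheaves on $\PP^{1}$. A cleaner alternative is to reinterpret $F$ via Proposition~\ref{prop: Nironi stability is equivalent to i-stability} as a Nironi $\delta$-stable sheaf on $[X/\inv]$, transport via the derived Fourier--Mukai equivalence to the crepant resolution of $[X/\inv]$, and invoke the classical emptiness of the moduli of Simpson-stable multi-covers of rigid $(-1,-1)$-curves on a local CY$3$ in the transported class. Either route reduces the problem to verified vanishings, and combined with the PT and $d=1$ MT computations above, completes the proposition.
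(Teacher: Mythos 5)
Your PT computation and your $d=1$ MT computation follow the paper's route essentially verbatim: the orbifold vertex formula of Bryan--Cadman--Young specializes to $Z^{\PT}(X,\inv)=M(Qw^{-1},-y)^{-1}M(Q,-y)^{-2}M(Qw,-y)^{-1}$, whose logarithm is exactly $\sum_{k>0}\frac{1}{k}Q^{k}\,\psi_{-(-y)^{k}}^{-1}\,\psi_{w^{k}}$, and the zero section is a rigid local curve with $(g,h,m)=(0,0,1)$ to which Corollary~\ref{cor: ngh(C)=1 for local curve} applies directly. Both parts are fine.

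The genuine gap is the MT vanishing for $d\geq 2$, and you have diagnosed it yourself: the $J$-adic filtration inequalities do not close. (Indeed the premise $[\supp(J^{i}F)]=(d-i)[C]$ is already unjustified; the cycle class of $\supp(J^{i}F)$ need not drop by exactly $[C]$ at each step.) The missing idea is torus localization. Since $\M^{\epsilon}_{d[C]}(X,\inv)$ is proper over $\Chow_{d[C]}(X)$, which is a single point, a nonempty moduli space must contain a fixed point for the torus acting on the toric threefold $X$. A torus-fixed $\inv$-stable sheaf $\calE$ is scheme-theoretically supported on a thickening $C_{\lambda}$ of $C$ indexed by a $2$-dimensional partition $\lambda$, with $\pi_{*}\OO_{C_{\lambda}}=\bigoplus_{(i,j)\in\lambda}\OO_{C}(iz_{0}+jz_{\infty})$. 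Writing $i=2a+i'$, $j=2b+j'$ and applying Lemma~\ref{lem: chi(L) for L=O(D) invariant under i} gives $\chi(\OO_{C}(iz_{0}+jz_{\infty}))=(1+a+b)R_{\reg}+(i'+j'-1)R_{-}$, so the $R_{\reg}$-coefficient of $\chi(\OO_{C_{\lambda}})$ is at least $|\lambda|$; since $\OO_{C_{\lambda}}$ injects into $\calE$ via an invariant section, $\inv$-stability forces $|\lambda|=1$. Then $\calE$ is a torus-invariant, $\inv$-equivariant rank $d$ bundle on $C\cong\PP^{1}$, which splits equivariantly and therefore cannot be $\inv$-stable for $d>1$. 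Your proposed alternative (Fourier--Mukai to the crepant resolution plus ``classical emptiness'' of stable multi-covers of rigid curves) is not a substitute as stated: the transported class has the form $d\gamma+v$ with $v$ in the exceptional lattice of the resolution of $[X/\inv]$, and the emptiness of the corresponding moduli of stable sheaves on that toric resolution is not an off-the-shelf fact --- establishing it would require an argument of essentially the same kind as the one above.
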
  

We start with $\inv$-PT theory. As we did in Lemma~\ref{lem:
ZPT(X,i)=ZPT([X/i])|wi=w}, we may compute the $\inv$-PT invariants of
$(X,\inv)$ by computing orbifold PT invariants of the stack quotient
$[X/\inv ]$. Since $[X/\inv ]$ is toric, we may use the orbifold
topological vertex \cite{Bryan-Cadman-Young}. The partition function
$Z^{\PT}([X/\inv ])$ is computed in Section~4.2 of
\cite{Bryan-Cadman-Young} where it is given by
$DT'(\mathcal{X}_{2,2})$ in the notation of
\cite{Bryan-Cadman-Young}. Here we are using
\cite[Theorem~6.12]{Beentjes-Calabrese-Rennemo} which states that the
reduced Donaldson-Thomas partition function is equal to the
Pandharipande-Thomas partition function. By the formula just below
\cite[Prop.~3]{Bryan-Cadman-Young} we have
\begin{equation}\label{eqn: ZPT of local football from BCY}
Z^{\PT}([X/\inv ]) = \prod_{u\in \{v,vp_{0},vr_{0},vp_{0}r_{0} \}} M(u,-q)^{-1}
\end{equation}
where 
\[
M(x,q) = \prod_{n=1}^{\infty}(1-xq^{n})^{-n}.
\]
The variables $v, p_{0},r_{0},$ and $q$ track classes in the
$K$-theory of sheaves on $[X/\inv ]$ which we can equivalently regard
as $\inv$-equivariant sheaves on $X$. Recall that the variables in
$Z^{\PT}(X,\inv )$ track the curve class and $\chi$ of the sheaf. Thus
to specialize $Z^{\PT}([X/\inv ])$ to $Z^{\PT}(X,\inv )$, we must
compute the curve class and $\chi$ of each of these classes. The
results are given in the following table:

\vskip3ex

\setlength{\arrayrulewidth}{0.8mm}
\setlength{\tabcolsep}{13pt}
\renewcommand{\arraystretch}{1.5}
\hskip-12ex
\small
\begin{table}[H]
\centering
\begin{tabular}{lllll}
\toprule
$Z^{\PT}([X/\inv ])$  & Class in $[X/\inv ]$ & Equivariant class & $(\chi ,\beta )$ & $Z^{\PT}(X,\inv )$ \\
variable&  (see \cite[\S~3.3]{Bryan-Cadman-Young}) & on $X$ &of class & variable\\
\midrule
$p_{0}$ & $[\OO_{z_{0}}\otimes R_{-}]$ & $[\OO_{z_{0}}\otimes R_{-}]$ &
$(R_{-},0)$ &$w$\\
$r_{0}$ & $[\OO_{z_{\infty} }\otimes R_{-}]$ & $[\OO_{z_{\infty} }\otimes R_{-}]$ &
$(R_{-},0)$ &$w$\\
$q$ & $[\OO_{\overline{p} }]$ & $[\OO_{p }\oplus \OO_{\inv (p)}]$ &
$(R_{\reg },0)$ &$y$\\
$v$ & $[\OO_{[C/\inv ]}(-\overline{p})]$&$[\OO_{C }(-p-\inv (p))]$ &
$(-R_{- },[C])$ &$Qw^{-1}$\\
\bottomrule
\end{tabular}
\end{table}
\normalsize

\smallskip

In the above table,  $\,\,\overline{p}\in [C/\inv ]$ is a generic point
corresponding to the $\inv$-orbit $\{p,\inv (p) \}\subset C$ and the
formula $\chi (\OO_{C}(-p-\inv (p)))=-R_{-}$ is obtained by applying
Lemma~\ref{lem: chi(L) for L=O(D) invariant under i}.

Equation~\eqref{eqn: ZPT of local football from BCY} then specializes
to 
\[
Z^{\PT}(X,\inv ) = M(Qw^{-1},-y)^{-1}  M(Q,-y)^{-2}  M(Qw,-y)^{-1}.
\]

It is straightforward to show that
\[
\log M(x,y)^{-1} = \sum_{k=1}^{\infty } \frac{x^{k}}{k}\, \psi_{-(-y)^{k}}^{-1}.
\]
and so
\begin{align*}
\log Z^{\PT}(X, \inv) &= \sum_{k=1}^{\infty } \frac{1}{k}Q^{k}\cdot
\left(w^{-k}+2+w^{k} \right)
\cdot  \psi_{-(-y)^{k}}^{-1} \\
&= \sum_{k=1}^{\infty } \frac{1}{k}Q^{k} \cdot
\psi_{-(-y)^{k}}^{-1}\cdot \psi_{w^{k}}
\end{align*}
and the formula for $\nPT{g,h}{d[C]}$ then follows.

Turning now to the $\inv$-MT theory, we begin with the following key result.
\begin{lemma}
$\M^{\epsilon}_{d[C]}(X,\inv )$ is empty for $d>1$.
\end{lemma}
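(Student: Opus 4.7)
The strategy is proof by contradiction: assume an $\inv$-stable sheaf $F\in\M^{\epsilon}_{d[C]}(X,\inv)$ exists for some $d\geq 2$ and construct an $\inv$-equivariant proper subsheaf $F'\subsetneq F$ whose Euler characteristic $\chi(F')=kR_{\reg}+\gamma R_{-}$ satisfies $k\geq 2$, contradicting Definition~\ref{defn: tau-stable MT sheaves}. Since $C$ is the only compact $1$-dimensional subvariety of $X$, the scheme-theoretic support of $F$ is a thickening of $C$. Pushing forward along the $\inv$-equivariant projection $\pi:X\to C=\PP^{1}$ (which covers the base involution $\overline{\inv}$), the sheaf $E:=\pi_{*}F$ is an $\overline{\inv}$-equivariant locally free rank-$d$ sheaf on $\PP^{1}$ with $\chi(E)=\chi(F)=R_{+}+(1+\epsilon)R_{-}$, and the equivariant version of Grothendieck's splitting gives $E\cong\bigoplus_{j=1}^{d}L_{j}$ as an equivariant direct sum of line bundles $L_{j}$.

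Consider first the easy case $I_{C}F=0$, so that $F=\bigoplus L_{j}$ is itself a rank-$d$ sheaf on $C$. Each equivariant summand $L_{j}$ is then a genuine $\inv$-equivariant subsheaf of $F$ of class $[C]\neq d[C]$. Writing $\chi(L_{j})=\alpha_{j}R_{+}+\beta_{j}R_{-}=\alpha_{j}R_{\reg}+(\beta_{j}-\alpha_{j})R_{-}$, the $\inv$-stability condition forbids any equivariant subsheaf of strictly smaller support class from having $R_{\reg}$-coefficient $\geq 1$, so $\alpha_{j}\leq 0$ for every $j$. This contradicts the identity $\sum_{j}\alpha_{j}=1$ obtained by reading off the $R_{+}$-part of $\chi(F)=R_{\reg}+\epsilon R_{-}$.

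In the thickened case $I_{C}F\neq 0$ I would apply the same principle to the natural filtration $F\supset I_{C}F\supset I_{C}^{2}F\supset\cdots$. Each subsheaf $I_{C}^{k}F$ has strictly smaller support class than $d[C]$, so stability forces $\alpha(\chi(I_{C}^{k}F))\leq 0$ for all $k\geq 1$, and hence the quotient $Q_{1}=F/I_{C}F$ (a sheaf on $C$) satisfies $\alpha(\chi(Q_{1}))\geq 1$. For any proper equivariant subsheaf $M\subsetneq Q_{1}$, its pullback $\widetilde M\subset F$ is an equivariant sub with support class $(d-\operatorname{rk}Q_{1}+\operatorname{rk}M)[C]<d[C]$, again yielding $\alpha(\chi(\widetilde M))\leq 0$; combining this with the equivariant summand decomposition of $Q_{1}$ produces, when $\operatorname{rk}Q_{1}\geq 2$, a numerical contradiction analogous to Case A. The main remaining difficulty is the case $\operatorname{rk}Q_{1}=1$, where $Q_{1}$ admits no summand decomposition and one must instead work with equivariant line subsheaves of $Q_{1}$ together with the $(u,v)$-compatibility encoded by the $\OO_{X}$-module structure $u,v:E\to E(1)$. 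Alternatively, one can finesse this entirely by invoking Proposition~\ref{prop: Nironi stability is equivalent to i-stability} to translate the problem to Nironi $\delta$-stability on $[X/\inv]$ and passing via the derived McKay correspondence to the crepant resolution, where emptiness of Simpson-stable moduli in multicover classes of a rigid $(-1,-1)$-curve is classical.
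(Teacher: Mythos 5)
Your reduced case ($I_{C}F=0$) is correct and coincides with the paper's own endgame: an equivariant rank-$d$ bundle on $C\cong\PP^{1}$ splits equivariantly, each line summand is an equivariant subsheaf of support class $[C]\neq d[C]$, so $\inv$-stability forces its $R_{\reg}$-coefficient to be $\leq 0$, contradicting $\sum_{j}\alpha_{j}=1$. The thickened case, however, is the substance of the lemma and your treatment of it has a genuine gap — one that is in fact larger than the part you flag. Beyond the unresolved $\operatorname{rk}Q_{1}=1$ case, the $\operatorname{rk}Q_{1}\geq 2$ argument also does not close: the preimages $\widetilde M_{j}\subset F$ of the line summands of $Q_{1}/(\text{torsion})$ all contain $I_{C}F$ (and the torsion of $Q_{1}$), so summing the inequalities $\alpha(\chi(\widetilde M_{j}))\leq 0$ over $j$ counts $\alpha(\chi(I_{C}F))$ with multiplicity $r$ rather than once; since $\alpha(\chi(I_{C}F))$ may be negative, the resulting inequality $r\,\alpha(\chi(I_{C}F))+(r-1)\,\alpha(\chi(T))+\alpha(\chi(Q_{1}))\leq 0$ is perfectly consistent with $\alpha(\chi(Q_{1}))\geq 1$ and yields no contradiction. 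The fallback you offer (Nironi stability plus derived McKay and a ``classical'' emptiness statement on the crepant resolution) is not an argument: the McKay equivalence does not preserve the heart of coherent sheaves, and no such classical emptiness statement is identified.

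The idea you are missing is the paper's opening move: $\Chow_{d[C]}(X)$ is a single point and $\M^{\epsilon}_{d[C]}(X,\inv)\to\Chow_{d[C]}(X)$ is proper, so a nonempty moduli space must contain a fixed point of the torus action on $X$. For a torus-fixed sheaf $\calE$ the scheme-theoretic support is a monomial thickening $C_{\lambda}$ of $C$ indexed by a partition $\lambda$, and the destabilizing subobject is not extracted from the $I_{C}$-adic filtration but is simply $\OO_{C_{\lambda}}\hookrightarrow\calE$, the image of the invariant section guaranteed by $\chi(\calE)=R_{\reg}+\epsilon R_{-}$. Since
\[
\pi_{*}\OO_{C_{\lambda}}=\bigoplus_{(i,j)\in\lambda}\OO_{C}(iz_{0}+jz_{\infty}),
\qquad
\chi\bigl(\OO_{C}(iz_{0}+jz_{\infty})\bigr)=(1+a+b)R_{\reg}+(i'+j'-1)R_{-},
\]
every box of $\lambda$ contributes at least one copy of $R_{\reg}$, so $|\lambda|\geq 2$ violates the bound $k\leq 1$ from Definition~\ref{defn: tau-stable MT sheaves}. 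This forces $\lambda=\square$, i.e.\ reduced support, after which your Case A finishes the proof. I recommend you restructure the argument around this reduction to torus-fixed sheaves rather than attempting to destabilize an arbitrary thickened sheaf directly.
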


\begin{proof}
Since $\M^{\epsilon}_{d[C]}(X,\inv )$ is proper over $\Chow_{d[C]}(X)$
which is a point, if it is non-empty, it admits a fixed point for the
torus action induced from the action on $X$. In order to obtain a
contradiction, we suppose there exists a torus invariant $\inv$-MT
sheaf $\calE $ in the class $d[C]$ with $d>1$.

Then the scheme-theoretic support of $\calE$ is the thickened curve
$C_{\lambda}$ determined from a $2$-dimensional partition $\lambda$.
Since
$\chi(\calE) = R_{\reg}+\epsilon R_{-}$, there is a 
$\inv$-invariant section
\[
0 \to \OO_{C_{\lambda}} \xrightarrow{s} \calE \to Q \to 0,
\]
and then since $\calE$ is $\inv$-stable, 
$\chi(\OO_{C_{\lambda}}) = kR_{\reg}+ m R_{-}$ with $k \leq 1$.

Let $\pi : X \to C$ be the projection, then $\chi(\OO_{C_{\lambda}}) =
\chi(\pi_{*}\OO_{C_{\lambda}})$.  We have
\[
\pi_{*} \OO_{C_{\lambda}} = \bigoplus_{(i,j) \in \lambda}
\OO_{C}(z_{0})^{i} \otimes \OO_{C}(z_{\infty})^{j} =  \bigoplus_{(i,j)
\in \lambda}\OO_{C}(iz_{0}+jz_{\infty }) .
\]
Writing $i=2a+i'$ and $j=2b+j'$ with $a,b\geq 0$ and $(i',j')\in
\{(0,0),(0,1),(1,0),(1,1) \}$ and applying Lemma~\ref{lem: chi(L) for
L=O(D) invariant under i}, we get 
\[
\chi (\OO_{C}(iz_{0}+jz_{\infty}) = (1+a+b)R_{\reg} + (i'+j'-1)R_{-}. 
\]
Since we are taking sums of these terms, the only way to guarantee $k
\leq 1$ as above, is if $\lambda = \square$ is the unique partition of
length $1$.

Thus $\calE$ is scheme-theoretically supported on $C$, and hence it
must a torus invariant, $\inv$-equivariant, rank $d$ vector bundle on
$C$. Such vector bundles split as a direct sum of $\inv$-equivariant
lines bundles which then contradicts the $\inv$-stability of $\calE$.
\end{proof}

It follows from the lemma that $\nMT{g,h}{d[C]}=0$ for $d>1$ and for
$d=1$ we apply Corollary~\ref{cor: ngh(C)=1 for local curve} which
then finishes the proof of Proposition~\ref{prop: nPT=nMT=1 for
(g,h,d)=1 for local football}.

\section{Acknowledgments}
The authors warmly acknowledge and thank the following people for
helpful discussions: Arend Bayer, John Duncan, Bronson Lim, Davesh
Maulik, Georg Oberdieck, Franco Rota, and Junliang Shen.

\vfill

\pagebreak

\appendix

\section{Tables of Values for $n_{g,h}(d,\oddAb )$}
\renewcommand{\arraystretch}{1.0}

In this appendix, we list explicitly the values of $n_{g,h}(d,\oddAb
)$ for $d\leq 7$. This case includes the primitive class $\beta_{d}$
on a Picard rank one Abelian surface $S$ where these numbers have some
enumerative significance. The $h=0$ numbers are (up to the minus sign
due to the second factor in $X=S\times \CC$) actual counts of
$\inv$-invariant hyperelliptic curves on $S$; they coincide with the
counts computed in \cite{BOPY}. For each $d$, the highest genus
occuring is $g=d+1$, the arithmetic genus of the class $\beta_{d}$.  Let
\[
\Chow_{\beta_{d}}(X)_{h}\subset \Chow_{\beta_{d}}(X)^{\inv }
\]
be the
dimension $h$ components of the $\inv$-fixed point locus. Then one can
show that
\[
n_{d+1,h}(d,\oddAb ) = \evir \left(\Chow_{\beta_{d}}(X)_{h} \right). 
\]
The right hand side can be computed directly since
$\Chow_{\beta_{d}}(X)^{\inv} = \Chow_{\beta_{d}}(S)^{\inv} \times \CC$
and that the first factor here is the disjoint union of the
$\inv$-invariant linear systems of the $\inv$-invariant line bundles
in the class $\beta_{d}$.

For $d\leq 1$, the only non-zero values of $n_{g,h}(d,\oddAb )$ are given
by
\[
n_{1,0}(0,\oddAb )=-4,\quad n_{2,0}(1,\oddAb )=-16.
\]
For $2\leq d\leq 7$, see the table below:

\newcommand{\ra}[1]{\renewcommand{\arraystretch}{#1}}

\tiny
\setlength{\tabcolsep}{7pt}
\begin{table}[H]
\centering
\caption*{Non-zero values of $n_{g,h}(d,\oddAb )$ for $2\leq d\leq 7$. }
\ra{1.2}
\begin{tabular}{@{}lllllcllll@{}}\toprule
& \multicolumn{2}{c}{$\bf{d=2}$}&& & & \multicolumn{2}{c}{$\bf{d = 3}$}& &
\\
\cmidrule{2-3} \cmidrule{7-8}
& $h=0$ & $h=1$ & & && $h=0$ & $h=1$ & & \\ \midrule
$g=2$ & -48 &  & & && -64 &  & &  \\
$g=3$ & -24 & 8& & && -160 & & & \\
$g=4$ &     &  & & && -16 & 32& & \\ \bottomrule

\addlinespace[2em]

& \multicolumn{2}{c}{$\bf{d=4}$}&& & & \multicolumn{2}{c}{$\bf{d = 5}$}& & \\
\cmidrule{2-4} \cmidrule{7-9}
& $h=0$ & $h=1$ & $h=2$ &&& $h=0$ & $h=1$ & $h=2$& \\ \midrule
$g=2$ & -112 &  & & && -96 &  & &  \\
$g=3$ & -456 & 24& & && -1056 &  && \\
$g=4$ & -192 & 96& & && -912 & 224& & \\
$g=5$ & -4   & 48&-12&&& -96 & 320 && \\
$g=6$ &    &   &  &&&      & 32  &-48& \\ \bottomrule

\addlinespace[2em]
& \multicolumn{2}{c}{$\bf{d=6}$} &&&& \multicolumn{2}{c}{$\bf{d=7}$}  & &  \\
\cmidrule{2-5} \cmidrule{7-10}
& $h=0$ & $h=1$ & $h=2$ & $h=3$& & $h=0$  &  $h=1$  &  $h=2$ &  $h=3$ \\ \midrule
$g=2$ & -192 &&&&&  -128  & && \\
$g=3$ & -1920 &32 &&&&  -3264    &&& \\
$g=4$ & -2992 &	  512 &	&	&	&-7776	&704	&&\\
$g=5$ & -736 &	  1056 &	-64 &	&       &-3424	&3072	&	&\\
$g=6$ & -16 &	  384 &		-144 &	&       &-240	&1920	&-448	&\\
$g=7$ &    &  8 & -72 &		16 & &	                &192	&-480&\\ 
$g=8$ &    &    &  &		 & &	&	&-48& 64\\ \bottomrule

\end{tabular}
\end{table}

\normalsize


\bibliography{localbiblio}

\begin{thebibliography}{10}

\bibitem{Compact-Complex-Surfaces-BHPV}
Wolf~P. Barth, Klaus Hulek, Chris A.~M. Peters, and Antonius Van~de Ven.
\newblock {\em Compact complex surfaces}, volume~4 of {\em Ergebnisse der
  Mathematik und ihrer Grenzgebiete. 3. Folge. A Series of Modern Surveys in
  Mathematics [Results in Mathematics and Related Areas. 3rd Series. A Series
  of Modern Surveys in Mathematics]}.
\newblock Springer-Verlag, Berlin, second edition, 2004.

\bibitem{Bayer-Macri}
Arend Bayer and Emanuele Macr\`\i.
\newblock M{MP} for moduli of sheaves on {K}3s via wall-crossing: nef and
  movable cones, {L}agrangian fibrations.
\newblock {\em Invent. Math.}, 198(3):505--590, 2014.

\bibitem{Beentjes-Calabrese-Rennemo}
Sjoerd~Viktor Beentjes, John Calabrese, and J{\o}rgen~Vold Rennemo.
\newblock {A proof of the Donaldson-Thomas crepant resolution conjecture}.
\newblock arXiv:math/1810.06581.

\bibitem{Behrend-micro}
Kai Behrend.
\newblock Donaldson-{T}homas type invariants via microlocal geometry.
\newblock {\em Ann. of Math. (2)}, 170(3):1307--1338, 2009.
\newblock arXiv:math/0507523.

\bibitem{BBDJS}
C.~Brav, V.~Bussi, D.~Dupont, D.~Joyce, and B.~Szendr\H{o}i.
\newblock Symmetries and stabilization for sheaves of vanishing cycles.
\newblock {\em J. Singul.}, 11:85--151, 2015.
\newblock With an appendix by J\"{o}rg Sch\"{u}rmann.

\bibitem{Bridgeland-K3-stability}
Tom Bridgeland.
\newblock Stability conditions on {$K3$} surfaces.
\newblock {\em Duke Math. J.}, 141(2):241--291, 2008.

\bibitem{Bridgeland-PTDT}
Tom Bridgeland.
\newblock Hall algebras and curve-counting invariants.
\newblock {\em J. Amer. Math. Soc.}, 24(4):969--998, 2011.
\newblock arXiv:1002.4374.

\bibitem{Bryan-Cadman-Young}
Jim Bryan, Charles Cadman, and Ben Young.
\newblock The orbifold topological vertex.
\newblock {\em Adv. Math.}, 229(1):531--595, 2012.
\newblock arXiv:math/1008.4205.

\bibitem{BOPY}
Jim Bryan, Georg Oberdieck, Rahul Pandharipande, and Qizheng Yin.
\newblock Curve counting on abelian surfaces and threefolds.
\newblock {\em Algebr. Geom.}, 5(4):398--463, 2018.
\newblock arXiv:math/1506.00841.

\bibitem{deCataldo-Migliorini2009}
Mark de~Cataldo and Luca Migliorini.
\newblock The decomposition theorem, perverse sheaves and the topology of
  algebraic maps.
\newblock {\em Bulletin of the American Mathematical Society}, 46(4):535--633,
  2009.

\bibitem{Garbagnati-Sarti-even-set-2008}
Alice Garbagnati and Alessandra Sarti.
\newblock Projective models of {$K3$} surfaces with an even set.
\newblock {\em Adv. Geom.}, 8(3):413--440, 2008.

\bibitem{Garbagnati-Sarti-Kummer-surfaces-2016}
Alice Garbagnati and Alessandra Sarti.
\newblock Kummer surfaces and {K}3 surfaces with {$(\Bbb{Z}/2\Bbb{Z})^4$}
  symplectic action.
\newblock {\em Rocky Mountain J. Math.}, 46(4):1141--1205, 2016.

\bibitem{Go-Va}
Rajesh Gopakumar and Cumrun Vafa.
\newblock M-theory and topological strings--{II}, 1998.
\newblock Preprint, hep-th/9812127.

\bibitem{KKV}
Sheldon Katz, Albrecht Klemm, and Cumrun Vafa.
\newblock M-theory, topological strings and spinning black holes.
\newblock {\em Adv. Theor. Math. Phys.}, 3(5):1445--1537, 1999.
\newblock Preprint: hep-th/9910181.

\bibitem{Lim-Rota}
Bronson Lim and Franco Rota.
\newblock Characteristic classes and stability conditions for projective
  {K}leinian orbisurfaces.
\newblock {\em Math. Z.}, 300(1):827--849, 2022.
\newblock arXiv:math.AG/2001.09139.

\bibitem{Macdonald-poincare-poly}
I.~G. Macdonald.
\newblock The {P}oincar\'e polynomial of a symmetric product.
\newblock {\em Proc. Cambridge Philos. Soc.}, 58:563--568, 1962.

\bibitem{Maulik-Toda}
Davesh Maulik and Yukinobu Toda.
\newblock Gopakumar-{V}afa invariants via vanishing cycles.
\newblock {\em Invent. Math.}, 213(3):1017--1097, 2018.

\bibitem{Migliorini-Shende2013}
Luca Migliorini and Vivek Shende.
\newblock A support theorem for {H}ilbert schemes of planar curves.
\newblock {\em Journal of the European Mathematical Society}, 15(6):2353--2367,
  2013.

\bibitem{Morrison-K3surfaces-1984}
D.~R. Morrison.
\newblock On {$K3$} surfaces with large {P}icard number.
\newblock {\em Invent. Math.}, 75(1):105--121, 1984.

\bibitem{Nironi-stability-on-stacks}
Fabio Nironi.
\newblock {Moduli Spaces of Semistable Sheaves on Projective Deligne–Mumford
  Stacks}.
\newblock arXiv:math/0811.1949.

\bibitem{Pandharipande-Thomas-KKV}
R.~Pandharipande and R.~P. Thomas.
\newblock The {K}atz--{K}lemm--{V}afa conjecture for {$K3$} surfaces.
\newblock {\em Forum Math. Pi}, 4:e4, 111, 2016.
\newblock arXiv:math/1404.6698.

\bibitem{Pandharipande-Thomas1}
Rahul Pandharipande and Richard Thomas.
\newblock Curve counting via stable pairs in the derived category.
\newblock {\em Invent. Math.}, 178(2):407--447, 2009.
\newblock arXiv:math/0707.2348.

\bibitem{Pandharipande-Thomas3}
Rahul Pandharipande and Richard Thomas.
\newblock Stable pairs and {BPS} invariants.
\newblock {\em J. Amer. Math. Soc.}, 23(1):267--297, 2010.
\newblock arXiv:math/0711.3899.

\bibitem{Pietromonaco-GHilbA}
Stephen Pietromonaco.
\newblock {$G$}-invariant {H}ilbert schemes on {A}belian surfaces and
  enumerative geometry of the orbifold {K}ummer surface.
\newblock {\em Res. Math. Sci.}, 9(1):Paper No. 1, 21, 2022.
\newblock arXiv:math/2011.14020.

\bibitem{Popa2017}
Mihnea Popa.
\newblock Derived equivalences of smooth stacks and orbifold {H}odge numbers.
\newblock In {\em Higher dimensional algebraic geometry---in honour of
  {P}rofessor {Y}ujiro {K}awamata's sixtieth birthday}, volume~74 of {\em Adv.
  Stud. Pure Math.}, pages 357--380. Math. Soc. Japan, Tokyo, 2017.

\bibitem{Rose-2014}
Simon C.~F. Rose.
\newblock Counting hyperelliptic curves on an {A}belian surface with
  quasi-modular forms.
\newblock {\em Commun. Number Theory Phys.}, 8(2):243--293, 2014.

\bibitem{Shen-Yin-hyperkahler}
Junliang Shen and Qizheng Yin.
\newblock {Topology of Lagrangian fibrations and Hodge theory of hyper-K\"ahler
  manifolds}.
\newblock arXiv:math/1812.10673.

\bibitem{Toda-PTDT}
Yukinobu Toda.
\newblock Curve counting theories via stable objects {I}. {DT}/{PT}
  correspondence.
\newblock {\em J. Amer. Math. Soc.}, 23(4):1119--1157, 2010.

\end{thebibliography}
\bibliographystyle{plain}

\end{document}